\definecolor{sienne}{RGB}{136, 45, 23}
\newtheorem{theorem}{Theorem}
\newtheorem{prop}[theorem]{Proposition}
\newtheorem{corollary}[theorem]{Corollary}
\newtheorem{lemma}[theorem]{Lemma}
\theoremstyle{definition}
\theoremstyle{remark}
\newtheorem{remark}[theorem]{Remark}
\patchcmd{\@IEEEeqnarray}{\relax}{\relax\intertext@}{}{}
\newcounter{proofstep}[theorem]
\newcommand{\step}[1]{%
\refstepcounter{proofstep}%
\vskip-\lastskip\medskip\noindent\textit{Step \arabic{proofstep}: #1. --- }}
\xapptocmd\proof{\setcounter{proofstep}{0}}{}{}
\DeclareMathOperator{\Sp}{Sp}
\DeclareMathOperator{\Span}{Span}
\DeclareMathOperator{\range}{Range}
\DeclareMathOperator{\codim}{codim}
\DeclareMathOperator{\rank}{rank}
\DeclareMathOperator{\Adj}{Adj}
\DeclareMathOperator{\order}{Order}
\newcommand{\bigO}{O}
\newcommand{\diff}[1][-3]{\ensuremath{\mathop{}\mkern#1mu\mathrm{d}}}
\newcommand{\set}{\mathbb}
\newcommand{\N}{{\set{N}}}
\newcommand{\Z}{{\set{Z}}}
\newcommand{\R}{{\set{R}}}
\newcommand{\C}{{\set{C}}}
\newcommand{\T}{{\set{T}}}
\newcommand{\A}{{\mathcal A}}
\newcommand{\coloneqq}{\mathrel{\mathord{:}\mathord=}}
\newcommand{\eq}{\Leftrightarrow}
\newcommand{\h}{{\mathrm h}}
\newcommand{\p}{{\mathrm p}}
\newcommand{\m}{{\mathrm m}}
\newcommand{\eu}{\mathrm e}
\newcommand{\iu}{\mathrm i}
\renewcommand{\epsilon}{\varepsilon}
\newcommand{\kal}[3][]{ [#2 |#3]\ifblank{#1}{}{_{#1}}}
\title{Null-controllability of underactuated linear parabolic-transport systems with constant coefficients}
\author{Armand Koenig\thanks{IMT, Université de Toulouse, CNRS, Université Toulouse III-Paul Sabatier (UPS), Toulouse, France (\texttt{armand.koenig@math.univ-toulouse.fr})}, Pierre Lissy\thanks{CEREMADE,  Universit\'e Paris-Dauphine \& CNRS UMR 7534, Universit\'e PSL, 75016 Paris, France (\texttt{lissy@ceremade.dauphine.fr}).}}
\begin{document}
\maketitle

\begin{abstract}
 The goal of the present article is to study controllability properties of mixed systems of linear parabolic-transport equations, with possibly non-diagonalizable diffusion matrix, on the one-dimensional torus. The equations are coupled by zero or first order coupling terms, with constant coupling matrices, without any structure assumptions on them. The distributed control acts through a constant matrix operator on the system, so that there might be notably less controls than equations, encompassing the case of indirect and simultaneous controllability. More precisely, we prove that in small time, such kind of systems are never controllable in appropriate Sobolev spaces, whereas in large time, null-controllability holds, for sufficiently regular initial data, if and and only if a spectral Kalman rank condition is verified. We also prove that initial data that are not regular enough are not controllable. Positive results are obtained by using the so-called fictitious control method together with an algebraic solvability argument, whereas the negative results are obtained by using an appropriate WKB construction of approximate solutions for the adjoint system associated to the control problem. As an application to our general results, we also investigate into details the case of $2\times2$ systems (\textit{i.e.}, one pure transport equation and one parabolic equation).
\end{abstract}

\vspace*{0.2cm}

\textbf{MSC Classification }93B05, 93B07, 93C20, 35M30.

\vspace*{0.2cm}

\textbf{Keywords }Parabolic-transport systems, null-controllability, observability.

\section{Introduction}
\subsection{Context and state of the art}
Controllability properties of coupled systems of PDEs has attracted a lot of attention this last two decades, due to their link with real-life models and also the specific mathematical difficulties arising in this context. An important part of the literature is devoted to systems where all components of the equations have the same qualitative behaviour (meaning that they are for instance all parabolic, or all hyperbolic, etc.). However, the case where different dynamics are mixed  has been less studied, despite its mathematical interest. Indeed, in this context, the controllability properties of each equation taken separately might be totally different (for instance, the heat equation with distributed control is controllable in arbitrary small time from any open subset \cite{LR95,FI96}, whereas the wave equation with distributed control is controllable in large time and under some geometric conditions \cite{BLR92}), so that the controllability properties of the final coupled system might be difficult to guess. Moreover, when we are considering underactuated systems (in the sense that there are less controls than equations) as in the present article, additional mathematical difficulties are appearing, due notably to the algebraic and analytic effects of the coupling terms, that become predominant in the understanding of the controllability or observability properties of the system under study. Here, in the present article, we aim to study the indirect controllability properties of a model of coupled parabolic-transport equations as introduced in \cite{BKL20}.

Let us mention that many realistic models already studied in the literature can be reformulated in terms of coupled parabolic-transport equations, notably the wave equation with structural damping \cite{RR07,MRR13,CRZ14,GR21}, the heat equation with memory \cite{IP09,GI13}, the 1D-Linearized compressible Navier-Stokes equations \cite{EGGP12,CMRR14,CM15,BCDK22}, or the Benjamin-Bona-Mahony equation \cite{RZ13}. For more details, we also refer to \cite[\S1.4]{BKL20}.  This justifies the interest of studying a general version of coupled parabolic-transport systems as in the present article, that can be seen as an attempt to find a unified framework in order to encompass many existing results of the literature and to generalize them. Other results of interest, related to the present work, are \cite{AMM22}, where the authors study a one-dimensional system of one transport equation and one parabolic equation, for which they prove a non-controllability result in small time by a WKB approach, and \cite{CDM22}, where the authors prove a controllability result in large time for a one-dimensional system of one transport equation and one elliptic equation.

\subsection{Presentation of the parabolic-transport system under study}
Let $T >0$ some final time , $\T = \R/(2\pi \Z)$ the one-dimensional torus, $\omega$ an nonempty open subset of $\T$, $d \in \N^{*}$  (which represents the number of equations in our system) , $m\in\{1,\dots,d\}$ (which represents the number of controls in our system), $A, B, K \in \mathcal M_d(\mathbb R)$ (that are some constant coupling matrices), and  $M\in\mathcal M_{d,m}(\mathbb R)$ (that is a constant control operator).
Our goal is to study the controllability properties of the following coupled system of parabolic-transport equations:
\begin{equation}
\label{Syst}\tag{Sys}
\left\{
\begin{array}{l l}
\partial_t f-  B \partial_{x}^2 f + A \partial_x f + K f  = M u \mathds1_{\omega} &\text{in}\ (0,T)\times \T,\\
f(0,\cdot)=f_0& \text{in}\ \T.
\end{array}
\right.
\end{equation}
Here,  the state is $f\colon [0,T]\times\T \to \R^d$, and  the control is $u\colon [0,T]\times\T \to \R^m$. The exact  regularity chosen for $f$ and $u$ will be made more precise later on.

We assume that
\begin{gather}
d=d_\h + d_\p\ \text{with}\ 1 \leq d_\h < d,\ 1 \leq d_\p < d,\label{dd1d2}\tag{H.1}\\
B = \begin{pmatrix}0&0\\0&D\end{pmatrix},\text{ with } D\in \mathcal M_{d_\p}(\mathbb R),\label{h:B}\tag{H.2}\\
\Re(\Sp(D)) \subset (0, +\infty). \label{h:D}\tag{H.3}
\end{gather}
$d_\h$ represents the number of purely hyperbolic equations, whereas $d_\p$ represents the number of parabolic equations.

Notice that \eqref{h:D} is necessary to ensure that the matrix operator $\partial_t - D\Delta$ is parabolic is the sense of Petrovskii (\cite[Chapter 7, Definition 2]{LSU68}).
Introducing the similar block decomposition for the $d\times d$ matrix $A= \big (\begin{smallmatrix}A'& A_{12}\\A_{21}&A_{22}\end{smallmatrix}\big)$, we make the following hypothesis on the matrix $A'\in \mathcal M_{d_h}(\mathbb R)$
\begin{equation}
\tag{H.4}
A'\text{ is diagonalizable with }\Sp(A')\subset \set R.\label{h:A1}
\end{equation}
Notice that it is well-known that \eqref{h:A1} is necessary (and sufficient, see \cite[\S2.2]{BKL20}) to ensure the well-posedness of \eqref{Syst}.
\subsection{Main results}
To state our results, we need to introduce the following notations:
\begin{equation}\label{eq-def-l}\ell(\omega) \coloneqq \sup\{ |I|;\ I \text{ connected component of }  \T \setminus \omega \},\end{equation}
\[\mu_* \coloneqq \min\{ |\mu|;\ \mu \in \Sp(A') \},\]
and
\begin{equation}\label{eq-def-T*}
T^\ast = T^*(\omega) \coloneqq 
\left\lbrace \begin{array}{ll}
\frac{\ell(\omega)}{\mu_*}  & \text{ if } \mu_* >0, \\
+\infty & \text{ if } \mu_*=0.
\end{array}\right.
\end{equation}
For $n\in \Z$, we also set 
\begin{equation}
    \label{eq-def-Bn}
    B_n \coloneqq -n^2 B -\iu n A -K
\end{equation}
and
\begin{equation}
    \label{eq-def-kalman}
    \kal{B_n}{M} \coloneqq \begin{pmatrix}M&B_nM&\dots & B_n^{d-1}M\end{pmatrix}.
\end{equation}
Our main result is the following one.
\begin{theorem}\label{th-main}
Assume that the hypotheses \eqref{dd1d2}--\eqref{h:A1} hold, that $T>T_*$.

Then,  the spectral Kalman rank condition $\rank(\kal{B_n}{M}) = d$ holds for all $n\in \Z$ if and only if for every $f_0\in H^{4d(d-1)}(\T)^d$, there exists a control $u\in L^2([0,T]\times \omega)^m$ such that the solution $f$ of the parabolic-transport system~\eqref{Syst} with initial condition $f_0$ satisfies $f(T,\cdot) = 0$.
\end{theorem}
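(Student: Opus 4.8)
I would prove the two implications separately. The reverse implication (controllability forces the spectral Kalman condition) is elementary and reduces to the finite-dimensional Kalman criterion mode by mode; the forward implication (the Kalman condition yields null-controllability for $T>T_*$) is the analytic core, and I would attack it by the fictitious-control method followed by an algebraic-solvability argument.

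\textbf{Necessity.} I argue by contraposition. Suppose $\rank(\kal{B_{n_0}}{M})<d$ for some $n_0\in\Z$. Projecting \eqref{Syst} onto the $n_0$-th Fourier mode turns it into the finite-dimensional control system $\dot v = B_{n_0}v + Mw(t)$ on $\C^d$, where $v(t)=\hat f(t,n_0)$ and $w(t)=\widehat{u\mathds1_\omega}(t,n_0)$; since $\omega$ is open, the map $u\mapsto \widehat{u\mathds1_\omega}(\cdot,n_0)$ is onto, so the inputs seen by the mode are exactly $Mw(t)$ with $w$ unconstrained. For the pair $(B_{n_0},M)$ the set reachable from $0$ is the controllable subspace $\range(\kal{B_{n_0}}{M})$, which is $e^{tB_{n_0}}$-invariant and, by hypothesis, proper; hence a single smooth mode $f_0(x)=v_0 e^{\iu n_0 x}$ with $v_0\notin\range(\kal{B_{n_0}}{M})$ cannot be steered to $0$, since $\hat f(T,n_0)=0$ would force $v_0\in\range(\kal{B_{n_0}}{M})$. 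As $B,A,K$ are real one has $B_{-n_0}=\overline{B_{n_0}}$, so the rank drops at $-n_0$ as well and the obstruction can be realized by real data of arbitrary Sobolev regularity, for every $T$.

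\textbf{Sufficiency, Step 1 (fictitious control).} Assume the Kalman condition holds for all $n$ and fix $T>T_*$. I first drop the structural constraint on the control and establish null-controllability of the fully actuated system $\partial_t\hat f-B\partial_x^2\hat f+A\partial_x\hat f+K\hat f=\mathds1_\omega\hat g$, with $\hat g$ an $\R^d$-valued control supported in $\omega$, for $T>T_*$. Because $B=\mathrm{diag}(0,D)$ with $\Re\Sp(D)>0$ and $A'$ is diagonalizable with real spectrum, the principal dynamics splits into a parabolic block, null-controllable in arbitrarily small time (Lebeau--Robbiano/Carleman estimates), and a transport block whose characteristic speeds are the eigenvalues of $A'$, null-controllable precisely for $T>\ell(\omega)/\mu_*=T_*$ by finite speed of propagation; a full control in $\omega$ lets me absorb the zero- and first-order coupling terms. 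Using time and space cut-offs together with parabolic and transport regularity estimates, I would moreover arrange $\hat g$ to be smooth and compactly supported in $(0,T)\times\omega$.

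\textbf{Sufficiency, Step 2 (algebraic solvability).} Writing $P\coloneqq\partial_t-B\partial_x^2+A\partial_x+K$, the goal is to convert the fictitious control into a genuine one. I look for $(z,u)$, produced from $\hat g$ by a differential operator in $(t,x)$, solving $Pz=\mathds1_\omega(\hat g-Mu)$ with $z$ supported in $(0,T)\times\omega$; then $f\coloneqq\hat f-z$ satisfies $Pf=Mu\mathds1_\omega$, $f(0)=f_0$, $f(T)=0$. In the $n$-th mode $P$ becomes $\partial_t-B_n$, and using $B_n=\partial_t-(\partial_t-B_n)$ together with a decomposition $g=\sum_{j=0}^{d-1}B_n^jM\alpha_j$ guaranteed by $\rank(\kal{B_n}{M})=d$, one splits $g$ into a part of the form $M(\text{time derivatives})$ and a part of the form $(\partial_t-B_n)(\cdots)$. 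Since $B_n$ is polynomial in $n$ with $\iu n\leftrightarrow\partial_x$ and $n^2\leftrightarrow-\partial_x^2$, these assemble into constant-coefficient differential operators in $(t,x)$ giving the desired right inverse; being local they preserve the support, and a final cut-off restores $z(0)=z(T)=0$.

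\textbf{Main obstacle.} The crux is the algebraic-solvability step. The coefficients $\alpha_j(g)$ come from inverting the Kalman matrix $\kal{B_n}{M}$, which a priori produces symbols rational in $n$; promoting them to polynomial (hence differential-operator) symbols uniformly in $n\in\Z$ is exactly where $\rank(\kal{B_n}{M})=d$ for \emph{every} $n$ is used, through a Cramer-type argument clearing denominators. Tracking the order of the resulting operator --- which grows like $n^{2(d-1)}$ through $B_n^{d-1}$ and is further amplified by the determinant --- is what forces the derivative loss and pins the Sobolev exponent $4d(d-1)$. Finally, unlike the parabolic block the transport block does not regularize, so I must propagate enough smoothness of $\hat f$ and $\hat g$ along the hyperbolic directions to legitimately apply these high-order operators while still landing $u\in L^2$. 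This regularity bookkeeping, rather than any isolated estimate, is the true difficulty.
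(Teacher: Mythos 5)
Your proposal follows essentially the same route as the paper: necessity by projecting onto each Fourier mode and invoking the finite-dimensional Kalman criterion, and sufficiency by a fictitious control (full actuation, smooth and compactly supported, for $T>T_*$) converted into a control in $\range(M)$ through mode-wise algebraic solvability, with the adjugate/determinant (Cramer) argument clearing the rational symbols and the degree bookkeeping on $\kal{B_n}{M}^+$ producing the $H^{4d(d-1)}$ loss --- exactly the paper's \cref{th-regular-control,th-special-control,th-res-alg,th-main-p,th-main-k0}. The only cosmetic slip is the ``final cut-off restoring $z(0)=z(T)=0$'', which is unnecessary (and would break the equation): since your fictitious control is compactly supported in $(0,T)\times\omega$ and the solvability operators are differential, $z$ automatically vanishes at $t=0,T$, which is how the paper handles it.
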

\begin{remark}\label{rk-main}
\begin{itemize}
\item Recall that the Kalman rank condition is necessary for the control of ODE systems~\cite[Theorem~1.16]{Coron07}. Therefore,   writing the parabolic-transport system in Fourier, we immediately find that for every $T>0$, the spectral Kalman-rank condition $\forall n\in\Z,\ \rank(\kal{B_n}{M}) = d$ is necessary for the null-controllability of every $H^k$ initial conditions in time $T$.
\item Actually, we prove two slightly stronger versions of this theorem, namely \cref{th-main-p,th-main-k0}, that are useful in order to obtain some controllability results under some constraints on Fourier coefficients of the hyperbolic part of the initial condition (see \cref{th-2x2-h}, \cref{th-2x2-p}, \cref{th-2x2-sim}). 
\item One can refine a little bit the regularity stated in \cref{th-main}, as follows. 
Assume that  $T>T_*$ and that for all $n\in \Z$, the spectral Kalman rank condition $\rank(\kal{B_n}{M}) = d$ holds. Then:
\begin{enumerate}
    \item for every $f_0\in H^{4d(d-1)}(\T)^{d_h}\times H^{4d(d-1)-1}(\T)^{d_p}$, there exists a control $u\in L^2([0,T]\times \omega)^m$ such that the solution $f$ of the parabolic-transport system~\eqref{Syst} with initial condition $f_0$ satisfies $f(T,\cdot) = 0$.
    \item if $A_{12}=0$, for every $f_0\in H^{4d(d-1)}(\T)^{d_h}\times H^{4d(d-1)-2}(\T)^{d_p}$, there exists a control $u\in L^2([0,T]\times \omega)^m$ such that the solution $f$ of the parabolic-transport system~\eqref{Syst} with initial condition $f_0$ satisfies $f(T,\cdot) = 0$.
\end{enumerate}
Indeed, by letting evolve the system freely on a short interval of time, we can show using the method of~\cref{th-regularity} that the parabolic component becomes $H^{4d(d-1)}(\T)^{d_p}$, so that \cref{th-main} can be applied, taking into account that the condition $T>T^*$ is open and that the system is time-invariant.
\item The spectral Kalman rank condition $\rank(\kal{B_n}{M}) = d$ was first introduced in \cite{ABDG09} for coupled systems of heat equations with diagonalizable diffusions (see also \cite{LZ19} for non-diagonalizable diffusions).
\end{itemize}
 \end{remark}
\begin{theorem}\label{th-necessary-rough}
 Let $\mu\in \Sp(A')$, $N\in \N$ and $T>0$. Assume that every initial condition $f_0\in L^2(\T)^d\cap \{\sum_{|n|>N} X_n \eu^{\iu nx}\}$ is steerable to $0$ in time $T$ with control in $L^2((0,T)\times \omega)$. Then, there exists $V_0\in \ker(A'^*+\mu)$ such that $M^*\big(\begin{smallmatrix} V_0\\0\end{smallmatrix}\big) \neq 0$.
\end{theorem}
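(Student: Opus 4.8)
The plan is to argue by contraposition and reduce the controllability of high-frequency data to a partial observability inequality that I shall contradict with adjoint solutions concentrated on a single Fourier mode. Write $E_N\subset L^2(\T)^d$ for the closed subspace spanned by the modes $X\eu^{\iu nx}$, $|n|>N$, $X\in\C^d$. Since the free dynamics has constant coefficients it is diagonal in the Fourier basis, acting on the $n$-th mode by $B_n$; in particular it leaves $E_N$ invariant. By the classical control--observability duality together with Douglas' lemma (see \cite{Coron07}), steerability of every $f_0\in E_N$ to $0$ in time $T$ implies the existence of $C>0$ such that every solution $g$ of the adjoint system
\[\partial_t g + B^*\partial_x^2 g + A^*\partial_x g - K^* g = 0,\]
whose Fourier support lies in $\{|n|>N\}$, satisfies
\[\|g(T)\|_{L^2}^2\le C\int_0^T\!\!\int_\omega |M^* g|^2\diff x\diff t.\]
Assuming, towards a contradiction, that $M^*\big(\begin{smallmatrix} V_0\\0\end{smallmatrix}\big)=0$ for every $V_0\in\ker(A'^*+\mu)$, it is enough to produce a sequence of such $g$ violating this inequality.

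In the Fourier variable the adjoint system decouples into the ODEs $\dot{\hat g}_n=-B_n^*\hat g_n$, so for any eigenpair $B_n^* W_n=\lambda_n W_n$ the single-mode field
\[g^{(n)}(t,x)=\eu^{\iu nx}\,\eu^{(T-t)\lambda_n}\,W_n\]
is an \emph{exact} adjoint solution, supported on mode $n$, hence lying in $E_N$ as soon as $|n|>N$. The key input is the behaviour of the spectrum of $B_n^*=-n^2B^*+\iu nA^*-K^*$ as $|n|\to\infty$: the hyperbolic branch dual to the transport eigenvalue $\mu$ provides, for $|n|$ large, an eigenvalue $\lambda_n$ with $\Re\lambda_n$ bounded uniformly in $n$ and a unit eigenvector $W_n$ converging to $\big(\begin{smallmatrix} V_0\\0\end{smallmatrix}\big)$ for some $V_0\in\ker(A'^*+\mu)\setminus\{0\}$. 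Granting this, the left-hand side is bounded below, $\|g^{(n)}(T)\|_{L^2}^2=2\pi|W_n|^2=2\pi$, while
\[\int_0^T\!\!\int_\omega|M^* g^{(n)}|^2\diff x\diff t=|\omega|\Big(\int_0^T\eu^{2(T-t)\Re\lambda_n}\diff t\Big)|M^* W_n|^2\le|\omega|\,T\,\eu^{2T|\Re\lambda_n|}\,|M^* W_n|^2.\]
As $\Re\lambda_n$ stays bounded and $M^* W_n\to M^*\big(\begin{smallmatrix} V_0\\0\end{smallmatrix}\big)=0$ by our hypothesis and continuity of the fixed matrix $M^*$, the right-hand side tends to $0$. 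Taking $|n|>N$ large enough contradicts the observability inequality, so $E_N$ is not controllable; by contraposition the desired $V_0$ exists. (Real-valued data are recovered by superposing the modes $\pm n$, which changes none of the estimates.)

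The crux---and the one genuinely technical step---is the spectral claim on $B_n^*$: that the hyperbolic part of its spectrum survives the singular limit $|n|\to\infty$ with eigenvalues of bounded real part and eigenvectors collapsing onto $\ker(A'^*+\mu)$. This is exactly where \eqref{h:D} and \eqref{h:A1} enter: by \eqref{h:D} the parabolic block $-n^2 D^*$ is strongly stable, so a Schur-complement reduction turns its coupling to the hyperbolic block into an $O(1)$ perturbation of $\iu nA'^*$; by \eqref{h:A1} the latter is diagonalizable with purely imaginary spectrum, whence the perturbed hyperbolic eigenvalues keep bounded real parts and their eigenvectors tend to eigenvectors of $A'^*$ supported in the hyperbolic block. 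Because the coefficients are constant, the single-mode fields are honest solutions, so no WKB remainder has to be controlled beyond this spectral asymptotics.
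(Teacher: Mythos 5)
Your argument is correct in substance, but it takes a genuinely different technical route from the paper's. The paper disproves the observability inequality with WKB \emph{approximate} solutions: for this statement it takes the linear phase $\phi = n_0(x-\mu t)$ of \cref{rk-wkb-phase}, so its counterexamples are single-Fourier-mode approximate solutions whose leading amplitude solves $(\partial_t -\mu\partial_x + K_\mu^*)Y_0 = 0$, the error being absorbed by well-posedness. You instead use \emph{exact} single-mode solutions $\eu^{\iu nx}\eu^{(T-t)\lambda_n}W_n$ built from eigenpairs of $B_n^*$, replacing the WKB hierarchy by spectral perturbation theory for $B_n^* = -n^2B^*+\iu nA^*-K^*$. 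The Schur-complement reduction you sketch (inverting the parabolic block thanks to \eqref{h:D}, then a Bauer--Fike bound on an $\bigO(1)$ perturbation of $\iu nA'^*$ thanks to \eqref{h:A1}) is correct, and is essentially the spectral analysis of \cite{BKL20} behind the projections $P^\h(z)$ recalled in \cref{sec-notations}; when the limiting eigenvalue of $A'$ is not simple one should phrase the conclusion as $\operatorname{dist}\bigl(W_n,\ \ker(A'^*-\mu)\times\{0\}\bigr)\to 0$ (convergence of spectral projections) rather than convergence of $W_n$ itself, which costs only a subsequence. What your route buys: no remainder estimate and a self-contained elementary argument, available precisely because the coefficients are constant. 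What the paper's route buys: it yields the stronger proposition of \cref{ssec-pr} (a necessary condition for every $K_\mu^*$-stable subspace $S$, tracking the subprincipal dynamics $\eu^{-tK_\mu^*}$), and it survives variable coefficients, where exact diagonalization is unavailable.

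Two sign slips should be fixed, neither fatal. First, your adjoint equation is the time-reversed one ($\partial_t g = -\mathcal A^* g$ rather than $\partial_t g = \mathcal A^*g$ with $\mathcal A = -\mathcal L$), so the quantity that duality controls is $\|\pi_N g(0)\|$, not $\|g(T)\|$; since your solutions satisfy $\|g(0)\|\asymp\|g(T)\|$ (bounded $\Re\lambda_n$), the contradiction is unaffected, but the inequality should be stated at the correct endpoint. Second --- and this one is inherited from the paper's own statement --- the limiting eigenspace is $\ker(A'^*-\mu)$, not $\ker(A'^*+\mu)$: the eigenvalues of $B_n^*$ on the hyperbolic branch attached to $\mu$ are $\iu n\mu + \bigO(1)$, with eigenvectors collapsing onto $\ker(A'^*-\mu)\times\{0\} = \range\bigl((P'_\mu)^*\bigr)\times\{0\}$, which is exactly the space appearing in the refined proposition of \cref{ssec-pr}. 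As literally written, $\ker(A'^*+\mu)$ can be $\{0\}$ (take $d_\h=1$, $A'=(a)$ with $a\neq 0$, $M=I_2$, $T$ large: the controllability hypothesis holds by \cref{th-main}, yet no nonzero $V_0$ exists), so the ``$+$'' version is vacuously false; your own Schur-complement computation, carried out carefully, lands in $\ker(A'^*-\mu)$, and that is how the claim should be stated.
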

\begin{remark}
\Cref{th-main,th-main-p,th-main-k0} only ensures null-controllability of smooth enough initial conditions. \Cref{th-necessary-rough} proves that such a regularity condition is needed in general: even if the time is large enough and if the Kalman rank condition is satisfied for every $n$, it might happen that some $L^2$ initial condition cannot be steered to $0$ with a $L^2$ control.
\end{remark}
\subsection{Precise scope and organization of the article}
This article can be seen as a continuation of \cite{BKL20}, insofar as we generalize the results of the above-mentioned article, since we are able to treat any matrices $A,B,K,M$ without any restrictions on their structure. Indeed, in \cite{BKL20}, the authors treated the case where $M=I_d$ (where no Kalman rank condition is needed), or particular cases where only the parabolic or the hyperbolic parts are controlled, under strong restrictions on the structure of the coupling matrices $A,B$ and $K$ and also on the diffusion matrix $B$.

Let us mention that our results are sharp in terms of the controllability conditions we obtain. However, it is very likely that the initial state space (whose choice is determined by technical reasons coming from the specific strategy we use, that is consuming in terms of regularity, see Section \ref{ssec-alg})  is almost never sharp and depends strongly on the structure of the coupling terms. Finding the exact ``good'' state space remains an open problem that seems to be difficult to solve in all generality.

The article is organized as follows. In \cref{sec-notations}, we give some notations and we gather some existing results that will be used in our proof. \Cref{sec-cs} is devoted to proving that the condition $\rank(\kal{B_n}{M}) = d$ is sufficient in order to obtain our desired controllability result in large time 
The argument is based on a fictitious control argument detailed in \cref{ssec-an}, where we first prove an auxiliary controllability result, in the case $M=I_d$, with regular enough controls for regular enough initial data. Then, in \cref{ssec-alg}, we explain how to obtain a control in the range on $M$ by performing algebraic manipulations.
Notice that the method of fictitious control plus algebraic solvability, that has been introduced in \cite{CL14} in the context of the controllability of PDEs, has been successfully used for various problems \cite{ACO17,DL16,DL18,LL17,CG17,SGM19,SGM19a,DL22}. One of the main novelties here is that the algebraic solvability is not directly performed on the system (or its adjoint as in \cite{DL22}) but on a projected version of the system on its Fourier components. \Cref{sec-cn} is devoted to proving some necessary conditions of controllability. 
\Cref{sec-wkb} is devoted to constructing WKB solutions. These solutions are used to disprove controllability in small time in \cref{ssec-st} and to prove \cref{th-necessary-rough} in \cref{ssec-pr}. \Cref{sec-2t2} aims to give an application of our results to the particular case of $2\times 2$ systems together with some considerations about the sharpness of our regularity assumptions in this precise setting. To conclude, \cref{app-comp-uniq} proves a general result about a ``control up to a finite-dimensional space plus unique continuation'' strategy that is used in \cref{ssec-an}, in the spirit of \cite{LZ98,BKL20}.

\textbf{Acknowledgement} Armand Koenig is supported by the ANR LabEx CIMI (under grant ANR-11-LABX-0040) within the French State Programme ``Investissements d'Avenir''.

Pierre Lissy is supported by the Agence Nationale de la Recherche, Project TRECOS, under grant ANR-20-CE40-0009.

\section{Some notations and preliminary results}\label{sec-notations}
We will rely on some basic results on the parabolic-transport system~\eqref{Syst} that are already known, see \cite{BKL20}. For the reader convenience, we collect here the notations and results we will use most often, and we will recall some others along the way as they are used.

Let $\mathcal L$ be the unbounded operator on $L^2(\set T)^d$ with domain $H^1(\set T)^{d_\h} \times H^2(\set T)^{d_\p}$ defined by
\[
\mathcal L f = -B\partial_x^2 f + A\partial_xf + Kf.
\]
The operator $-\mathcal L$ generates a strongly continuous semigroup of bounded operators of $L^2(\set T)^d$~\cite[Proposition 11]{BKL20}. Every $H^k(\set T)^d$ is stable by $\eu^{-t\mathcal L}$, and the restriction of $\eu^{-t\mathcal L}$ on $H^k(\set T)^d$ is a strongly continuous semigroup of bounded operators~\cite[Remark 13]{BKL20}. We denote by $S(T,\,f_0,\,u)$ the solution at time $T$ of the parabolic-transport system~\eqref{Syst} with control matrix $M=I_d$  (the identity matrix of size $d$, \textit{i.e.}, we control every component with a different control), initial condition $f_0$ and control $u$.

Let $n_0 \in \set N$ to be chosen large enough later on. We denote by $e_n : x\in \set T \mapsto \eu^{\iu nx}$. We also denote by $E \colon \mathbb C \rightarrow \mathcal M_d(\mathbb C)$ the following function:
\[E(z)=B+zA-z^2K.\]

Let $r>0$ small enough. For $|z|<r$, let $P^\h(z)$ be the eigenprojection on the sum of eigenspaces of $E(z)$ associated to the set of eigenvalues $\lambda(z)\in \Sp(E(z))$ such that $|\lambda(z)|<r$. According to~\cite[Proposition~5]{BKL20}, $P^\h(z)$ satisfies:
\begin{itemize}
    \item $P^\h(0) = \big(\begin{smallmatrix} I&0\\0&0\end{smallmatrix}\big)$;
    \item $z\mapsto P^\h(z)$ is holomorphic;
    \item $P^\h(z)$ is a projection that commutes with $E(z)$;
    \item $P^\h(z) E(z) = \bigO(z)$ as $z\rightarrow 0$.
\end{itemize}
We also set $P^\p(z) = I - P^\h(z)$. This projection $P^\p(z)$ satisfies similar properties as $P^\h(z)$~(\cite[Propositions~6]{BKL20}).

Following \cite[Proposition 18]{BKL20}, we denote by $F^0$ the space of frequencies less than $n_0$ and by $F^\h$ (respectively $F^\p$) the space of hyperbolic frequencies greater than $n_0$ (respectively the space of parabolic frequencies greater than $n_0$), \textit{i.e.}
\[\begin{array}{r@{}l@{}l@{}l@{}l}
F^0       &{}={}&\bigoplus_{|n|\leq n_0}\Span(e_n); & \\
F^\p       &{}={}&\bigoplus_{|n|> n_0}\range(P^\p(\iu/n))e_n;  &  \\
F^\h       &{}={}& \bigoplus_{|n|> n_0}\range(P^\h(\iu/n))e_n. &  \end{array}\]

By  \cite[Proposition 18]{BKL20}, we notably have
\[L^2(\T)^{d} {}={} F^0  {}\oplus{}  F^\p  {}\oplus{}  F^\h.\]
The space $F^\p$ is stable by the semigroup $\eu^{-t\mathcal L}$ (see the definition of $P^\p$~\cite[Proposition 5]{BKL20} and the definition of $F^\p$~\cite[Proposition 18]{BKL20}). We denote by $\mathcal L^\p$ the restriction of $\mathcal L$ to $F^\p$.

Similarly, the space $F^\h$ is stable by the semigroup $\eu^{-t\mathcal L}$. We denote by $\mathcal L^\h$ the restriction of $\mathcal L$ to $F^\h$, and $-\mathcal L^\h$ generates a strongly continuous \emph{group} of bounded operators on $F^\h$~\cite[Proposition 19]{BKL20}. 

Let $\Pi^{0}$, $\Pi^{\p}$, $\Pi^{\h}$ and $\Pi$ be the projections defined by
\[\begin{array}{r@{}c@{}c@{}c@{}c@{}c@{}c@{}c}
L^2(\T)^{d} &{}={}& F^0 & {}\oplus{} & F^\p & {}\oplus{} & F^\h; & \\
\Pi^0       &{}={}& I_{F^0} & {}+{} & 0   & {}+{}      & 0;  & \\
\Pi^\p       &{}={}& 0   &  {}+{}  & I_{F^\p} & {}+{}    & 0;  &  \\
\Pi^\h       &{}={}& 0 & {}+{} & 0 & {}+{} & I_{F^\h}; & \\
\Pi\,         &{}={}& 0 & {}+{} & I_{F^\p} & {}+{} & I_{F^\h}  & {}= \Pi^\p+\Pi^\h. \end{array}\]
These projections are bounded operators on $L^2(\set T)^d$~\cite[Proposition 18]{BKL20} (and also on every $H^k(\set T)^d$, as one can readily convince by following the proof of \cite[Proposition 18]{BKL20}).

\section{Null controllability of regular initial conditions}\label{sec-cs}
\subsection{Regular controls for regular initial conditions}\label{ssec-an}
As a technical preparation for the proof of \cref{th-main}, we need some results regarding the regularity of controls, when the control matrix is $M = I_d$.
\begin{prop}\label{th-regular-control}
Assume that $T>T_*$ (as defined in \cref{eq-def-T*}) and that $M=I_d$. Let $k, \ell\in \N$. For every $f_0\in H^k(\T)^d$, there exists $u\in H^k_0((0,T)\times \omega)^{d_\h}\times H^\ell_0((0,T)\times \omega)^{d_\p}$ such that the solution of the parabolic-transport system~\eqref{Syst} with initial condition $f_0$ and control $u$ satisfies $f(T,\cdot) = 0$.
\end{prop}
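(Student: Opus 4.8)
The plan is to exploit the invariant splitting $L^2(\T)^d = F^0 \oplus F^\p \oplus F^\h$ and to drive the three blocks with the single control $u\mathds1_\omega$, keeping careful track of what each block costs in regularity. Since $M = I_d$, the observation in the dual problem is the full state restricted to $\omega$, so the baseline $L^2$-null-controllability in time $T > T_*$ is exactly the joint observability inequality $\|g(0)\|_{L^2}^2 \lesssim \int_0^T\int_\omega |g|^2$ for the adjoint semigroup, which I would take from \cite{BKL20}. The whole content of the proposition is to upgrade this $L^2$ statement to one producing controls that are $H^k_0$ on the hyperbolic channels and $H^\ell_0$ on the parabolic ones, with $\ell$ arbitrary.

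First I would gain spatial regularity for free on the parabolic part. Choosing $\tau > 0$ small enough that $T - 2\tau > T_*$ and letting the system evolve with no control on $[0,\tau]$, the state $\eu^{-\tau\mathcal L} f_0$ has its $F^\p \oplus F^0$-component in $C^\infty$ (by the smoothing of $\mathcal L^\p$ and finite-dimensionality of $F^0$) while its $F^\h$-component stays merely in $H^k$, since $-\mathcal L^\h$ generates a group and transport does not regularize. This already explains the asymmetry between the $H^k$ bound on the hyperbolic control and the arbitrary $H^\ell$ bound on the parabolic one, and it makes the control automatically flat at $t=0$.

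On the middle interval I would then run the $L^2$-controllability to steer the smoothed state to $0$, but organize it so that $0$ is reached strictly before $T$; letting the control vanish on the last subinterval keeps $f \equiv 0$ there and makes the control flat at $t = T$. To turn the resulting $L^2$ control into an $H^k_0 \times H^\ell_0$ control I would run the duality argument in the Sobolev scale rather than in $L^2$: the constant-coefficient structure of $\mathcal L$ lets one commute spatial derivatives through the dynamics and transfer the observability inequality to the $H^k$ (hyperbolic) and $H^\ell$ (parabolic) norms of the adjoint data, whence the dual control inherits the corresponding regularity; joint time-space regularity and the vanishing of the relevant traces at $t = 0, T$ are then obtained by inserting a smooth temporal cut-off and using the equation to trade time derivatives for space derivatives of the state.

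The genuine obstacle is that there is a \emph{single} localized control $u\mathds1_\omega$ driving all three invariant blocks at once: multiplication by $\mathds1_\omega$ does not commute with the projections $\Pi^0,\Pi^\p,\Pi^\h$, since it couples Fourier modes, so one cannot control the blocks independently and must establish observability jointly. Handling the finite-dimensional block $F^0$ and, above all, gluing the hyperbolic and parabolic controls into one admissible control supported in $\omega$ and flat at both endpoints is where the ``control up to a finite-dimensional space plus unique continuation'' mechanism of the appendix enters: I would use it to absorb $F^0$ and to patch the two infinite-dimensional channels, the uniqueness argument ruling out the finitely many directions left uncontrolled. Keeping track of the endpoint regularity through this gluing, so that the final control genuinely lands in $H^k_0 \times H^\ell_0$ and not merely in $H^k \times H^\ell$, is the most delicate bookkeeping in the argument.
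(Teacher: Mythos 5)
Your high-level architecture is the same as the paper's: splitting along $F^0\oplus F^\p\oplus F^\h$, time-staggered controls that vanish near $t=0$ and $t=T$, parabolic smoothing as the source of the $H^k$ versus $H^\ell$ asymmetry, and the ``control up to finite dimension plus unique continuation'' device of \cref{th-comp-uniq} to absorb the finite-dimensional leftovers. But there is a genuine gap at the heart of your argument, namely the step where you ``run the duality argument in the Sobolev scale'' to upgrade the $L^2$ control to one in $H^k_0((0,T)\times\omega)^{d_\h}\times H^\ell_0((0,T)\times\omega)^{d_\p}$. By the standard duality (\textit{e.g.}~\cite[Lemma~2.48]{Coron07}, used in exactly this form in the appendix), producing controls in $H^k_0((0,T)\times\omega)$ requires an observability inequality whose right-hand side is the \emph{dual} norm of the control space, \textit{i.e.} an estimate of the type $\|g(0)\|_{H^{-k}(\T)^d}\le C\,\|g\|_{(H^k_0((0,T)\times\omega))'}$ for solutions of the adjoint system. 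Commuting $\partial_x^k$ through the free, constant-coefficient dynamics is fine, but the right-hand side involves the localization to $\omega$, and multiplication by $\mathds1_\omega$ (or by a cutoff $\chi$) does not commute with $\partial_x$; testing the known $L^2$ observability inequality on $\partial_x^k g_0$ therefore does not yield the inequality above. Likewise, multiplying an exact control by smooth space-time cutoffs destroys the exact steering property, and ``using the equation to trade time derivatives for space derivatives'' does not restore it: one needs a correction mechanism, and that mechanism is precisely what is missing from your sketch.

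The paper resolves this by two genuinely different mechanisms on the two blocks, neither of which is a Sobolev-scale duality for the coupled system. For the hyperbolic block, $-\mathcal L^\h$ generates a \emph{group} on $F^\h$, and the paper invokes the Ervedoza--Zuazua theorem~\cite[Theorem~1.4]{EZ10} --- a result specific to time-reversible systems, proved by a bootstrap that solves the equation both forward and backward in time --- to produce controls of the form $\eta(t)\chi(x)M^*Y(t)$ lying in $H^k_0((0,T')\times\omega)^{d_\h}$ for $H^k$ data (\cref{th-hyp}). This theorem cannot be applied to the full system, which is not time-reversible, so your plan of doing the regularity upgrade jointly on the coupled dynamics cannot be repaired by citing it either. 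For the parabolic block, the control is $C^\infty_c((T',T)\times\omega)^{d_\p}$ even for merely $L^2$ data (\cref{th-par}, taken from~\cite{BKL20}); note that this is \emph{not} because the free evolution smooths the parabolic component of the initial state (your $\tau$-delay step, which plays no role in the paper's proof of \cref{th-regular-control}), but because the construction of the parabolic control itself exploits parabolic smoothing --- and this smoothing is also what makes the composed operator $\mathcal C$ compact, feeding the Fredholm alternative that glues the two channels before \cref{th-comp-uniq} is applied. Without an argument of Ervedoza--Zuazua type, or some substitute proof that the hyperbolic group admits $H^k_0$ controls for $H^k$ data, your proposal does not go through.
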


We adapt the proof of the corresponding result when $k = 0$~\cite[Theorem 2]{BKL20}. First, we prove the following adaptation of~\cite[Proposition 21]{BKL20}.
\begin{prop}\label{th-hyp}
Let $T' \in (T^*,T)$ and $k\in \set N$. If $n_0$ (in the definition of $F^0$, see \cite[Eq.~(40--42)]{BKL20}) is large enough, there exists a continuous operator 
\begin{equation*}
\mathcal U^\h \colon\begin{array}[t]{@{}c@{}l}
 H^k(\T)^{d} \times H^k_0((T',T)\times\omega)^{d_\p} & \rightarrow  H^k_0((0,T')\times\omega)^{d_\h}\\
 (f_0,u_{\p}) &\mapsto u_{\h},
\end{array}
\end{equation*}
such that for every $(f_0,u_{\p})\in  H^k(\T)^{d} \times H^k_0((T',T)\times\omega)^{d_\p}$ (where $u_{\p}$ is extended by $0$ on $(0,T')$ and $u_{\h}$ is extended by $0$ on $(T',T)$),
\begin{equation*}
\Pi^{\h} S(T;f_0, (\mathcal U^\h (f_0,u_{\p}),u_{\p})) = 0.
\end{equation*}
\end{prop}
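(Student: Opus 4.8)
The plan is to reduce the statement to a pure controllability property of the hyperbolic group $\eu^{-t\mathcal L^\h}$ on $F^\h$, and then to rerun the $k=0$ construction of \cite[Proposition~21]{BKL20} while tracking $H^k$ norms so as to get Sobolev-regular controls depending continuously on the data. First I would write Duhamel's formula for $S(T;f_0,(u_\h,u_\p))$ and project it with $\Pi^\h$. Since $F^\h$ is stable under the semigroup, $\Pi^\h$ commutes with $\eu^{-t\mathcal L}$, and using that $u_\h$ (resp.\ $u_\p$) is supported in $(0,T')$ (resp.\ $(T',T)$),
\begin{equation*}
\Pi^\h S(T;f_0,(u_\h,u_\p)) = \eu^{-T\mathcal L^\h}\Pi^\h f_0 + \int_0^{T'}\eu^{-(T-s)\mathcal L^\h}\Pi^\h\Big(\mathds1_\omega\begin{pmatrix}u_\h(s)\\0\end{pmatrix}\Big)\diff s + \int_{T'}^{T}\eu^{-(T-s)\mathcal L^\h}\Pi^\h\Big(\mathds1_\omega\begin{pmatrix}0\\u_\p(s)\end{pmatrix}\Big)\diff s .
\end{equation*}
Because $-\mathcal L^\h$ generates a \emph{group} on $F^\h$, the factor $\eu^{-(T-T')\mathcal L^\h}$ is invertible, so $\Pi^\h S(T)=0$ is equivalent to
\begin{equation*}
\int_0^{T'}\eu^{-(T'-s)\mathcal L^\h}\Pi^\h\Big(\mathds1_\omega\begin{pmatrix}u_\h(s)\\0\end{pmatrix}\Big)\diff s = h(f_0,u_\p),
\end{equation*}
where $h(f_0,u_\p)\coloneqq -\eu^{-T'\mathcal L^\h}\Pi^\h f_0 - \int_{T'}^T\eu^{-(T'-s)\mathcal L^\h}\Pi^\h\big(\mathds1_\omega\begin{pmatrix}0\\u_\p(s)\end{pmatrix}\big)\diff s$. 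Using the stability of $H^k(\T)^d$ under $\eu^{-t\mathcal L}$ and the boundedness of $\Pi^\h$, the map $h$ is a bounded linear map from $H^k(\T)^{d}\times H^k_0((T',T)\times\omega)^{d_\p}$ into $F^\h\cap H^k(\T)^d$. It therefore suffices to construct a bounded right inverse of the hyperbolic control map $u_\h\mapsto \int_0^{T'}\eu^{-(T'-s)\mathcal L^\h}\Pi^\h\Big(\mathds1_\omega\begin{pmatrix}u_\h(s)\\0\end{pmatrix}\Big)\diff s$ from $H^k_0((0,T')\times\omega)^{d_\h}$ onto $F^\h\cap H^k(\T)^d$, and to set $\mathcal U^\h\coloneqq(\text{that right inverse})\circ h$.

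Second I would establish this $H^k$ controllability of the hyperbolic part. For $k=0$ it is exactly \cite[Proposition~21]{BKL20}: on $F^\h$ the operator $\mathcal L^\h$ is, to leading order in the frequency, the transport operator $A'\partial_x$, whose propagation speeds are the eigenvalues of $A'$, real by \eqref{h:A1} and of minimal modulus $\mu_*$; since $T'>T^*=\ell(\omega)/\mu_*$ (which in particular forces $\mu_*>0$), every characteristic crosses $\omega$ during $(0,T')$ and the solution can be corrected there. Choosing $n_0$ large guarantees on the one hand that $\mathcal L^\h$ is genuinely of transport type on $F^\h$ (the lower-order terms and the curvature of $P^\h(\iu/n)$ being uniformly controlled), and on the other hand that $P^\h(\iu/n)$ is close to $\big(\begin{smallmatrix}I&0\\0&0\end{smallmatrix}\big)$, so that acting through the first $d_\h$ components with $u_\h$ indeed reaches all of $F^\h$.

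To produce the Sobolev regularity and the continuous dependence, I would make the construction explicit in a moving frame. After diagonalizing $A'$, each scalar leading-order component is transported at a constant speed $\mu\in\Sp(A')$; passing to the characteristic variable $y$ turns the control equation into $\int_0^{T'}\mathds1_\omega(y+\mu s)\,\tilde u(s,y)\diff s=(\text{datum})(y)$, which I would solve by $\tilde u(s,y)=\theta_\mu(s,y)\,(\text{datum})(y)$, with $\theta_\mu$ a fixed smooth profile supported in $\{y+\mu s\in\omega\}$, normalized by $\int_0^{T'}\mathds1_\omega(y+\mu s)\theta_\mu(s,y)\diff s=1$ and vanishing to infinite order at $s=0,T'$. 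Such a $\theta_\mu$ exists precisely because $|\mu|\,T'>\ell(\omega)$. The resulting formula is linear and continuous from $H^k$ data to $H^k_0$ controls, and the vanishing of $\theta_\mu$ at the time endpoints yields the $H^k_0$ membership needed for the later gluing in \cref{th-regular-control}.

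The main obstacle, and the reason the $k=0$ result is not simply inherited, is exactly this $H^k$ regularity together with continuous dependence: because $\mathds1_\omega$ does not commute with $\partial_x$, one cannot upgrade an $L^2$ control operator merely by differentiating in $x$, so the moving-frame construction must be carried out while simultaneously absorbing the lower-order and frequency-coupling terms of $\mathcal L^\h$ (for instance by a perturbative fixed point, or by a compactness--uniqueness argument preserving the observability estimate) and choosing $n_0$ large enough that neither the off-diagonal blocks $A_{12},A_{21}$ nor the curvature of $P^\h$ destroys the transport structure or the surjectivity through the first $d_\h$ components. Keeping all these perturbations uniform in the $H^k$ norm, while maintaining the control in $H^k_0$, is the technical heart of the argument.
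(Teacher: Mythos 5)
Your opening reduction --- Duhamel plus $\Pi^\h$, then invertibility of the group $\eu^{-t\mathcal L^\h}$ to convert $\Pi^\h S(T;\cdot)=0$ into an exact (equivalently null) controllability problem for $\partial_t f+\mathcal L^\h f=\Pi^\h(\mathds 1_\omega u_\h,0)$ at time $T'$ --- is exactly the paper's first step and is correct. The gap is in the second half, which you yourself label ``the technical heart'' and then do not carry out. Your explicit moving-frame profile $\theta_\mu$ solves the control problem only for the decoupled constant-coefficient model $\partial_t+\mu\partial_x$; the actual generator $\mathcal L^\h$ on $F^\h$ differs from $A'\partial_x$ by zero-order couplings \emph{and} by nonlocal, frequency-dependent corrections coming from the projections $P^\h(\iu/n)$, and your two proposed fallbacks are each a substantial unproved task rather than a routine patch. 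A compactness--uniqueness (HUM/duality) argument producing controls in $H^k_0((0,T')\times\omega)$ requires an observability inequality in which the observation is measured in the \emph{dual} norm $H^{-k}((0,T')\times\omega)$; this is a strictly stronger statement than the $k=0$ observability of \cite{BKL20} (weaker observation norm), and nothing in your proposal proves it. A perturbative fixed point would amount to redoing the whole proof of \cite[Proposition~23]{BKL20} with $H^k$ bookkeeping --- precisely the work you defer. Moreover, as you correctly observe, one cannot just differentiate an $L^2$ control in $x$ because $\mathds 1_\omega$ and $\partial_x$ do not commute; but you offer no mechanism replacing that, so the proposal stops exactly where the proof has to start.

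The paper closes this gap with a tool absent from your proposal: the Ervedoza--Zuazua regularity-of-controls theorem for group-generated systems \cite[Proposition~1.3 \&{} Theorem~1.4]{EZ10}. Starting from the known $k=0$ null-controllability \cite[Proposition~23]{BKL20} on a slightly shrunk control region $\widetilde\omega$ with $\overline{\widetilde\omega}\subset\omega$ and $T^*(\widetilde\omega)<T'$, one takes the control in the special form $V(t)=\eta(t)\chi(x)M^*Y(t)$, where $\eta\in C^\infty_0(0,T')$, $\chi\in C^\infty_c(\omega)$ equals $1$ on $\widetilde\omega$, and $Y$ is the adjoint flow issued from a final datum $Y_{T'}$; \cite{EZ10} guarantees both that $V$ steers any $z_0\in H^k(\T)^d$ to zero and that $Y_{T'}\in H^k$, whence $V\in L^2(0,T';H^k(\omega))\cap H^k(0,T';L^2(\omega))$ with a linear continuous estimate in terms of $\|z_0\|_{H^k}$. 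The elementary Fourier embedding $L^2(0,T';H^k_0(\omega))\cap H^k_0(0,T';L^2(\omega))\subset H^k((0,T')\times\omega)$ and the cutoffs $\eta,\chi$ then give $V\in H^k_0((0,T')\times\omega)^d$. To repair your argument you would need either to import \cite{EZ10} (or an equivalent smooth-controls-for-smooth-data result) on top of the $k=0$ case, as the paper does, or to actually prove the $H^{-k}$ observability inequality for the perturbed transport group; as written, neither is done.
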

\begin{proof}
As in \cite[\S4.3.1]{BKL20}, the conclusion of \cref{th-hyp} is equivalent to the exact controllability of the system $\partial_t f + \mathcal L^\h f = \Pi^\h (u,0)$ at time $T'$. Since $-\mathcal L^\h$ generates a strongly continuous \emph{group}, the exact controllability at time $T'$ is equivalent to the null-controllability at time $T'$, which is what we are going to prove.

When $k=0$, \cite[Proposition 23]{BKL20} is the claimed result. To extend this result to $k >0$, we use a general result of Ervedoza and Zuazua concerning the regularity of controls for regular initial data in the context of groups of operators \cite[Theorem 1.4]{EZ10}. Let $\widetilde \omega$ an open subset of $\set T$ such that $\overline{\widetilde \omega} \subset \omega$ and $T^*(\widetilde \omega) < T'$. Let $\chi \in C_c^\infty(\omega)$ such that $\chi = 1$ on $\widetilde \omega$. Let $\eta\in C^\infty_0(0,T')$. Let $z_0 \in H^k(\set T)^d$ be an initial condition. 
Let $Y_{T'}$ as defined by \cite[Proposition~1.3]{EZ10} and define the control as 
\[V(t)=\eta(t)\chi(x)M^*Y(t),\]
where $Y$ is the solution to 
\[\partial_t Y-  B^* \partial_{x}^2 Y - A^* \partial_x Y + K^* Y=0\]
associated to the initial condition $Y(T')=Y_{T'}$.
According to \cite[Proposition~1.3]{EZ10}, $V(t)$ is a control that steers $z_0$ to $0$ at time $T'$. According to \cite[Theorem~1.4]{EZ10}, $Y_{T'}\in H^k(\set T)^d$ (hence $V\in L^2(0,{T'};  H^k(\omega)^d)$) and $V\in H^k(0,{T'};  L^2(\set \omega)^d)$, with estimates of the form
\[
\|V\|_{L^2(0,{T'};  H^k(\omega)^d)}^2 + \|V\|_{H^k(0,{T'};  L^2(\omega)^d)}^2 \leq C_k \|z_0\|_{H^k(\set T)^d}^2.
\]

We claim that $L^2(0,T';\ H^k_0(\omega)) \cap H^k_0(0,T';\ L^2(\omega)) \subset H^k((0,T')\times \omega)$. Indeed, for every $\tau \in \R$ and $\xi \in \R$,
\[
(1+\tau^2 + \xi^2)^k \leq C_k\big((1+\tau^2)^k + (1+\xi^2)^k\big).\]
Hence, integrating in Fourier space, 
\[
\|f\|_{H^k(\R^2)}^2 \leq C_k\big(\|f\|_{L^2(\R;H^k(\R))}^2 + \|f\|_{H^k(\R;L^2(\R))}^2\big).
\]
Recall that for $\Omega \subset \R^n$ convex\footnote{More generally, satisfiying the \emph{segment condition}, see\cite[Definition~3.21 \&{} Theorem~5.29]{AF03}.}, $H^k_0(\Omega)$ is the set of functions whose extension by zero outside $\Omega$ are $H^k(\R^n)$. Hence, $L^2(0,T';\ H^k_0(\omega)) \cap H^k_0(0,T';\ L^2(\omega))\subset H^k((0,T')\times \omega)$ as claimed, so that $V \in H^k((0,{T'})\times \omega)^d$.

Since $\eta\in C^\infty(0,T')$ and $\chi\in C^\infty_0(\omega)$, we conclude that $V \in H^k_0((0,{T'})\times \omega)^d$.
\end{proof}

For the proof of \cref{th-regular-control}, we will also use:
\begin{prop}[\cite{BKL20}, proposition 22]\label{th-par}
Let $T'\in (T^*,T)$ and $k \in \set N$. If $n_0$ is large enough, there exists a continuous operator 
\begin{equation*}
\mathcal U^\p \colon \begin{array}[t]{@{}c@{}l}
  L^2(\T)^{d} \times L^2((0,T')\times\omega)^{d_\h}  & \rightarrow  C^\infty_c((T',T)\times\omega)^{d_\p} \\
 (f_0,u_{\h}) &\mapsto u_{\p},
\end{array}
\end{equation*}
(in the sense that for any $s\in \N$, $\mathcal U^\p :
  L^2(\T)^{d} \times L^2((0,T')\times\omega)^{d_\h}  \rightarrow  H^s_0(T',T)\times\omega)^{d_\p}$ is continuous for the natural topologies associated to these spaces)
such that for every $(f_0,u_{\h})\in  L^2(\T)^{d} \times L^2((0,T')\times\omega)^{d_\h}$,
\begin{equation*}
\Pi^{\p} S(T;f_0, (u_{\h},\mathcal U^\p(f_0,\,u_{\h})) = 0.
\end{equation*}
\end{prop}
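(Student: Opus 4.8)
The plan is to reduce the statement to a standard null-controllability problem for the purely parabolic restriction $\mathcal L^\p$ on the time interval $(T',T)$, and then to build the control by a weighted duality (Fursikov--Imanuvilov) argument, whose Carleman weights automatically produce the required $C_c^\infty$ regularity. First I would use Duhamel's formula together with the invariance of $F^\p$ under $\eu^{-t\mathcal L}$ to write, for any parabolic control $u_\p$ supported in $(T',T)\times\omega$,
\[
\Pi^\p S(T;f_0,(u_\h,u_\p)) = \eu^{-(T-T')\mathcal L^\p} z + \int_{T'}^T \eu^{-(T-s)\mathcal L^\p}\,\Pi^\p\big(0,u_\p(s)\big)\mathds 1_\omega\,\diff s,
\]
where $z \coloneqq \Pi^\p S(T';f_0,(u_\h,0))\in F^\p$ depends continuously and linearly on $(f_0,u_\h)\in L^2(\T)^d\times L^2((0,T')\times\omega)^{d_\h}$. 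The task then becomes: steer the parabolic evolution $\partial_t w + \mathcal L^\p w = \Pi^\p(0,u_\p)\mathds 1_\omega$ from $w(T')=z$ to $w(T)=0$, with $u_\p$ smooth, compactly supported in $(T',T)\times\omega$, and depending continuously on $z$ (hence on $(f_0,u_\h)$) in every $H^s_0$ norm.

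The heart of the argument is an observability inequality for the adjoint parabolic evolution, observed only through the $d_\p$ physical parabolic components localized in $\omega$. Here I would exploit the high-frequency structure recalled before the statement: since $P^\h(0)=\big(\begin{smallmatrix}I&0\\0&0\end{smallmatrix}\big)$ and $z\mapsto P^\p(z)$ is holomorphic, one has $\range P^\p(\iu/n)\to\{0\}\times\C^{d_\p}$ as $|n|\to\infty$, so on $F^\p$ the restriction $\mathcal L^\p$ is, up to a bounded and strongly smoothing perturbation, the decoupled diffusion acting on the parabolic block $D$, and the control operator $u_\p\mapsto\Pi^\p(0,u_\p)\mathds 1_\omega$ is a small perturbation of the \emph{full} distributed control of that block (we control as many parabolic components as there are parabolic equations, so no Kalman condition intervenes). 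I would therefore derive the observability inequality from the classical Carleman estimate for the scalar heat equation applied componentwise to the $d_\p$ parabolic equations, and absorb the off-diagonal coupling and the hyperbolic-to-parabolic feedback by taking $n_0$ large, so that the parabolic spectral dissipation (the eigenvalues of $B_n$ on $\range P^\p(\iu/n)$ have real part $\le -cn^2$) dominates, together with the smallness of $I-P^\p(\iu/n)$ projected onto the parabolic block.

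Given the observability inequality, I would construct $u_\p$ by minimizing a Fursikov--Imanuvilov functional whose weight $\eu^{-C/((t-T')(T-t))}$ is singular at \emph{both} endpoints of $(T',T)$. The minimizer yields a control of the form $u_\p=\chi(x)\,\rho(t,x)\,(\text{parabolic components of the adjoint solution})$, with $\chi\in C_c^\infty(\omega)$ a spatial cutoff and $\rho$ the inverse weight. The double singularity of the weight forces $u_\p$ to vanish to infinite order as $t\to T'^+$ and $t\to T^-$, which is exactly what places it in $C_c^\infty((T',T)\times\omega)^{d_\p}$, while $\chi$ produces smoothness and compact support in space; continuity into each $H^s_0$ follows from the weighted energy estimates, which are linear in $z$ and hence in $(f_0,u_\h)$.

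The main obstacle I anticipate is precisely this observability step: obtaining a Carleman-type estimate for a \emph{coupled, non-self-adjoint, possibly non-diagonalizable} parabolic system, observed only through its parabolic components and, moreover, living on the abstract invariant subspace $F^\p$ rather than on a full physical-component space. Reconciling the Carleman estimate (naturally stated on the full space) with the spectral projection $\Pi^\p$, and quantifying the perturbation so that the dissipation $\Re\Sp(B_n)\le -cn^2$ on the parabolic directions genuinely absorbs the coupling terms once $n_0$ is large, is the delicate point. Once observability is in hand, the smoothness and compact support of the control are a fairly mechanical consequence of the singular weights.
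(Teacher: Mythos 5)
The reduction via Duhamel's formula and the Fursikov--Imanuvilov endgame (doubly singular weight, control vanishing to infinite order at both endpoints, continuity by linearity and closed graph) are all sound, but the central step --- observability of the adjoint of $\mathcal L^\p$ via Carleman estimates --- contains a genuine gap, in fact two. First, ``the classical Carleman estimate for the scalar heat equation applied componentwise'' is not available here: the principal part of the parabolic block is $-D\partial_x^2$ with $D$ only assumed to satisfy \eqref{h:D}, so $D$ may be non-diagonalizable. The components are then coupled through \emph{second-order} terms (even after reduction to Jordan form, the nilpotent part still carries $\partial_x^2$ and is in no sense small), and such principal-part couplings cannot be absorbed as lower-order perturbations in a componentwise scalar Carleman estimate; this is precisely the known obstruction for parabolic systems with non-diagonalizable diffusion, and one of the reasons this line of work proceeds in Fourier. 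Second, the generator on $F^\p$ is not a differential operator: as the computation in \cref{th-reg-parab} makes explicit, the dynamics on $F^\p$ is governed by the Fourier multiplier $n^2 D - \iu n A_{22} - K_{22} - G(\iu/n)(\iu n A_{21}+K_{21})$, i.e., a \emph{nonlocal} perturbation of order one of $-D\partial_x^2$. Carleman estimates are not stable under such perturbations, and your proposed remedy (``take $n_0$ large'') does not operate inside a Carleman scheme: conjugation by the exponential weight $\eu^{s\varphi}$ mixes frequencies, so the high-frequency smallness of the perturbation and the dissipation rate $\Re\Sp(B_n)\leq -cn^2$ on $\range(P^\p(\iu/n))$ cannot be invoked to absorb error terms in the weighted inequality. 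The same frequency-mixing problem undermines the claim that $u_\p\mapsto \Pi^\p(0,u_\p)\mathds 1_\omega$ is a small perturbation of full distributed control: $\mathds 1_\omega$ and $\Pi^\p$ do not commute, and this mismatch is exactly what the weights cannot see.

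For comparison: the paper does not reprove this statement at all --- it quotes it verbatim from \cite{BKL20} (Proposition 22), where observability of the parabolic branch is obtained by a Lebeau--Robbiano-type argument: a spectral inequality for finite linear combinations of $\eu^{\iu n x}$ observed on $\omega$ (with cost $\eu^{CN}$, in the spirit of \cite{LZ98}), combined with the dissipation estimate for $\eu^{-t\mathcal L^\p}$ on high-frequency blocks, which follows from \eqref{h:D} alone via resolvent bounds and is therefore insensitive both to the non-diagonalizability of $D$ and to the nonlocal projection $P^\p(\iu/n)$, since the whole argument is run frequency block by frequency block. The $C^\infty_c$ regularity and the $H^s_0$-continuity of the control then come from the explicit block construction (low-frequency controls that are trigonometric polynomials in $x$, glued with smooth cutoffs in time), not from singular Carleman weights. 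To salvage your scheme you would need a Carleman estimate for parabolic systems with non-diagonalizable principal part and nonlocal first-order terms, which is not available; the Lebeau--Robbiano route avoids this entirely.
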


We can now prove \cref{th-regular-control} by mimicking the proof of the case $k=0$ \cite[Proposition~20 \&{} \S 4.5]{BKL20}.
\begin{proof}[Proof of \cref{th-regular-control}]
 \step{Control up to final dimensional space} We claim that there exists a closed finite codimensional space $\mathcal G$ of $H^k(\set T)^d$ and a continuous operator  $\mathcal U\colon \mathcal G \to H^k_0((0,T')\times \omega)^{d_\h}\times C_c^\infty((T',T)\times \omega)^{d_\p}$ (in the sense that for any $s\in \N$, $\mathcal U\colon \mathcal G \to H^k_0((0,T')\times \omega)^{d_\h}\times H_0^s(T',T)\times \omega)^{d_\p}$ is continuous for the natural topologies associated to these spaces)
 such that for every $f_0\in \mathcal G$, $\Pi S(T,\,f_0,\,\mathcal U f_0) = 0$.
 
 The property $\Pi S(T,\, f_0,\, (u_\h,u_\p)) = 0$ holds if
 \begin{equation}
     \left\{
     \begin{aligned}
     u_\h &= \mathcal U^\h (f_0,\,u_\p) = \mathcal U^\h_1(f_0) + \mathcal U^\h_2(u_\p),\\
     u_\p &= \mathcal U^\p(f_0,\, u_\h) = \mathcal U^\p_1(f_0) + \mathcal U^\p_2(u_\h).
     \end{aligned}
     \right.
 \end{equation}
 Set $\mathcal C = \mathcal U_1^\p + \mathcal U^2_\p\mathcal U^\h_1$. Then, the previous relations hold if
 \begin{equation}\label{eq-compact-op}
     \mathcal C f_0 = (I-\mathcal U^\p_2 \mathcal U_2^\h)u_\p.
 \end{equation}
  Since $\mathcal U^\p_2$ is continuous from $H^k_0((T',T)\times \omega)^{d_\p}$ into $C_c((T',T)\times \omega)^{d_\p}$, we deduce that the operator $\mathcal C\colon H^k_0{((T',T)\times \omega)}^{d_\p} \to H^k_0{((T',T)\times \omega)}^{d_\p}$ is compact. Thus, according to Fredholm's alternative, the relation~\eqref{eq-compact-op} holds on a closed finite codimensional space $\mathcal G$.
  
  \step{Conclusion} Dealing with the finite (co)dimensional spaces $F^0$ and $\mathcal G$ is a straightforward adaptation of \cite[\S4.5]{BKL20}; more specifically, we use \cref{th-comp-uniq} proved in Appendix \ref{app-comp-uniq} with $H = V = H^k(\set T)^d$, $U_T = H^k_0((0,T)\times \omega)^{d_\h}\times H^\ell_0((0,T)\times \omega)$,  $A = -\mathcal L$, $B = \mathds 1_\omega$, $\mathcal G = \mathcal G$ and $\mathcal F = F^0$. The control up to a  finite dimensional space hypothesis is satisfied according to the previous step. The unique continuation hypothesis is satisfied because every generalised eigenvector is a finite sum of elements of the form $X_n \eu^{\iu n x}$ ($X_n\in \mathbb C^d$), and finite linear combinations of $X_n \eu^{\iu n x}$ have the unique continuation property thanks to, \textit{e.g.}, Jerison-Lebeau's spectral inequality (see \cite[Theorem~3]{LZ98}, or \cite[Eq.~(90)]{BKL20} for our specific case).
\end{proof}

For technical reasons, we will need the control to be in the form $P(\partial_x) u$, where $P(\partial_x)$ is a constant coefficients differential operator to be chosen later on.
\begin{prop}\label{th-special-control}
Assume that $T>T_*$ (as defined in \eqref{eq-def-T*}) and that $M=I_d$. Let $k,\ell \in \set N$. Let $P$ be a nonzero polynomial with complex coefficients.
Assume that $\ell\geqslant \deg(P)$.
Let $f_0\in H^k(\T)^d$ be such that for every $n\in \Z$, $P(\iu n) = 0 \implies c_n(f_0) = 0$. Then, there exists $u\in H^{k+\deg(P)}_0((0,T)\times \omega)^{d_\h} \times H^\ell_0((0,T)\times \omega)^{d_\p}$ such that the solution of the parabolic-transport system~\eqref{Syst} with initial condition $f_0$ and control $P(\partial_x)u$ satisfies $f(T,\cdot) = 0$.
\end{prop}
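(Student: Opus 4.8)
The plan is to reduce everything to \cref{th-regular-control} by formally ``dividing the initial datum by $P(\partial_x)$''. Recall that $P(\partial_x)$ acts diagonally in the Fourier basis, $P(\partial_x)e_n = P(\iu n)e_n$, and that its kernel on $L^2(\T)^d$ is spanned by the finitely many modes $e_n$ with $n$ in the finite set $Z := \{n\in\Z : P(\iu n)=0\}$ (finite since $P\neq 0$). The compatibility assumption $P(\iu n)=0\Rightarrow c_n(f_0)=0$ says precisely that $f_0$ carries no Fourier content on $Z$, so I would set
\[
g_0 := \sum_{n\notin Z}\frac{c_n(f_0)}{P(\iu n)}\,e_n .
\]
Since $|P(\iu n)|\sim |n|^{\deg P}$ as $|n|\to\infty$, this division gains $\deg P$ derivatives, so that $g_0\in H^{k+\deg P}(\T)^d$ and $P(\partial_x)g_0 = f_0$. (Over $\C$ this is unproblematic; should one insist on real data, one treats real and imaginary parts separately by linearity.)

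First I would apply \cref{th-regular-control} to $g_0$, taking the hyperbolic regularity index to be $k+\deg P$ and the parabolic one to be $\ell$: this produces a control
\[
u\in H^{k+\deg P}_0((0,T)\times\omega)^{d_\h}\times H^\ell_0((0,T)\times\omega)^{d_\p}
\]
such that the solution $g$ of $\partial_t g + \mathcal L g = u\mathds1_\omega$ with $g(0)=g_0$ satisfies $g(T,\cdot)=0$. This $u$ already has exactly the regularity claimed in the statement, so it is the control we will ultimately use.

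Then I would set $f := P(\partial_x)g$ and check it does the job. Because $A,B,K$ are constant, $\mathcal L$ (and hence $\eu^{-t\mathcal L}$) commutes with the Fourier multiplier $P(\partial_x)$, so applying $P(\partial_x)$ to the equation for $g$ yields
\[
\partial_t f + \mathcal L f = P(\partial_x)\big(u\mathds1_\omega\big),
\]
while $f(0)=P(\partial_x)g_0 = f_0$ and $f(T,\cdot)=P(\partial_x)g(T,\cdot)=0$. It then remains only to identify the right-hand side with the control term $\big(P(\partial_x)u\big)\mathds1_\omega$ appearing in \eqref{Syst} for $M=I_d$, i.e. to prove the commutation identity $P(\partial_x)(u\mathds1_\omega) = (P(\partial_x)u)\mathds1_\omega$.

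The hard part will be this last identity, which is the only delicate point and precisely where the hypothesis $\ell\geq\deg P$ enters. Viewing $u\mathds1_\omega$ as the extension of $u$ by zero, the $H_0$ boundary conditions guarantee (by the characterisation of $H^k_0$ recalled in the proof of \cref{th-hyp}) that this extension is genuinely of class $H^{\min(k+\deg P,\,\ell)}$, hence at least $H^{\deg P}$ in $x$ across $\partial\omega$. Consequently $P(\partial_x)(u\mathds1_\omega)$ is an honest $L^2$ function, with no Dirac masses created on $\partial\omega$, and it is supported in $\overline\omega$ since differential operators do not enlarge supports. It therefore coincides with the extension by zero of the interior expression $P(\partial_x)u$, which is exactly $(P(\partial_x)u)\mathds1_\omega$. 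This closes the argument: the entire controllability content is carried by \cref{th-regular-control}, and the only remaining work is this support-and-regularity bookkeeping at the boundary of $\omega$.
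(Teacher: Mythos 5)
Your proof is correct and follows essentially the same route as the paper: define $\widetilde f_0 = P(\partial_x)^{-1} f_0$ mode by mode in Fourier (gaining $\deg P$ derivatives), apply \cref{th-regular-control} with indices $(k+\deg P,\ell)$, and transport the resulting solution and control through $P(\partial_x)$ using that the coefficients are constant. Your explicit verification of the identity $P(\partial_x)(u\mathds 1_\omega)=(P(\partial_x)u)\mathds 1_\omega$ — via the extension-by-zero characterisation of $H^k_0$ and the hypothesis $\ell\geq\deg P$ — is in fact spelled out more carefully than in the paper, which uses the same hypothesis only to guarantee $\widetilde f\in L^2((0,T);H^{k+\deg P}(\T))$ before applying $P(\partial_x)$.
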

\begin{proof}
$k,\ell \in \set N$ with $\ell\geqslant \deg(P)$.Let $f_0 \in H^k(\set T)^d$ be such that for every $n\in \Z$, $P(\iu n) = 0 \implies c_n(f_0) = 0$. We define $\widetilde f_0 \coloneqq P(\partial_x)^{-1}f_0$ by $c_n(\widetilde f_0) \coloneqq P(\iu n)^{-1} c_n(f_0)$ if $P(\iu n) \neq 0$ and $c_n(\widetilde f_0) \coloneqq 0$ if $P(\iu n) = 0$. Note that $P(\partial_x) \widetilde f_0 = f_0$ and that $\widetilde f_0 \in H^{k+\deg(P)}_0( \omega)^{d}$. Then, applying \cref{th-regular-control} to $\widetilde f_0$ leads to the fact that there exists $\widetilde u\in H^{k+\deg(P)}_0((0,T)\times \omega)^{d_\h}\times H^\ell_0((0,T)\times \omega)^{d_\p}$ such that the solution $\widetilde f$ of the parabolic-transport system~\eqref{Syst} with initial condition $\widetilde f_0$ and control $\widetilde u$ satisfies $\widetilde f(T,\cdot) = 0$. Moreover, since $\widetilde f_0 \in H^{k+\deg(P)}_0( \omega)^{d}$ and $\widetilde u\in H^{k+\deg(P)}_0((0,T)\times \omega)^{d_\h}\times H^\ell_0((0,T)\times \omega)^{d_\p}$ with $\ell\geqslant \deg(P)$, we notably have $\widetilde f\in L^2((0,T);H^{k+\deg(P)}(\mathbb T))$. 
Hence, setting $f=P(\partial_x)\widetilde f$ and $u=P(\partial_x)\widetilde f$, and using that  $P(\partial_x)$ has constant coefficients (so that it commutes with the operator $\partial_t -  B \partial_{x}^2  + A \partial_x  + K Id)$) ensures that $f$ verifies \eqref{Syst} with initial condition $f_0$ and control $P(\partial_x)u$. Moreover, since $\widetilde f(T,\cdot)=0$, we also have$f(T,\cdot)=P(\partial_x)\widetilde f(T,\cdot)=0$, which leads to the desired result.
\end{proof}

\subsection{Algebraic solvability}\label{ssec-alg}
For $k\in \N$, we define
\begin{equation}
    \label{def_kal_k}
    \kal[k]{B_n}{M} \coloneqq \begin{pmatrix}M&B_nM&\dotsc & B_n^{k-1}M\end{pmatrix}.
\end{equation}
We prove the following variant of \cref{th-main}.
\begin{theorem}\label{th-main-p}
Assume that the hypotheses \eqref{dd1d2}--\eqref{h:A1} hold, and that $T>T_*$. Let $k \in \N$. Assume that for all $|n|\in \N$ large enough, the Kalman rank condition $\rank(\kal[k]{B_n}{M}) = d$ holds. Define the following space of functions
\[
E\coloneqq \{f\in L^2(\T)^d\colon \forall n\in \Z,\ c_n(f) \in \range(\kal{B_n}{M})\}.
\]
Set, when it is defined,
\[
\kal[k]{B_n}{M}^+ \coloneqq \kal[k]{B_n}{M}^* \left(\kal[k]{B_n}{M}\kal[k]{B_n}{M}^*\right)^{-1}.
\]
Write $\kal[k]{B_n}{M}^+$ by blocks as
\[
\kal[k]{B_n}{M}^+ = \begin{pmatrix}
L_{n,1}^\h & L_{n,1}^\p \\
\vdots & \vdots\\
L_{n,k}^\h & L_{n,k}^\p \\
\end{pmatrix},
\]
where the $L_{n,j}^\h$ are of size $m\times d_\h$ and the $L_{n,j}^\p$ are of size $m\times d_\p$. Considering the $L_{n,j}^\h$ as rational functions of $n$, and denoting their degree by $\deg(L_{n,j}^\h)$, set
\[
p \coloneqq \max_{1\leq j \leq k} \deg(n^{j-1} L_{n,j}^\h) = \max_{1\leq j \leq k} \big(j-1+ \deg(L_{n,j}^\h)\big).
\]
Then, for every $f_0\in H^{p}(\T)^d\cap E$, there exists a control $u\in L^2([0,T]\times \omega)$ such that the solution $f$ of the parabolic-transport system~\eqref{Syst} with initial condition $f_0$ satisfies $f(T,\cdot) = 0$.
\end{theorem}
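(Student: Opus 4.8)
The plan is to reduce \cref{th-main-p} to the fully actuated situation of \cref{ssec-an} by a fictitious control argument, and then to recover a control valued in the range of $M$ through an algebraic manipulation performed \emph{frequency by frequency} using the pseudoinverse $\kal[k]{B_n}{M}^+$. Throughout, write $L_{n,j} \coloneqq (L_{n,j}^\h \ L_{n,j}^\p)$ for the $j$-th block-row of $\kal[k]{B_n}{M}^+$, so that the right-inverse identity $\sum_{j=1}^{k} B_n^{j-1} M L_{n,j} = \kal[k]{B_n}{M}\,\kal[k]{B_n}{M}^+ = I_d$ holds at every frequency where $\rank(\kal[k]{B_n}{M}) = d$. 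First I would dispose of the finitely many frequencies $n$ at which $\rank(\kal[k]{B_n}{M}) < d$: splitting $f_0 = f_0^{\mathrm{bad}} + f_0^{\mathrm{good}}$ along these modes, the hypothesis $f_0 \in E$ forces $c_n(f_0^{\mathrm{bad}}) \in \range(\kal{B_n}{M})$, which is exactly the condition for the mode-$n$ ODE $\dot x = B_n x + M w$ to be steerable to $0$; handling the support constraint with the control-up-to-a-finite-dimensional-space plus unique continuation scheme (\cref{th-comp-uniq}), exactly as in the proof of \cref{th-regular-control}, steers $f_0^{\mathrm{bad}}$ to $0$ with an $L^2$ control supported in $\omega$. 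There remains $f_0^{\mathrm{good}}$, whose Fourier coefficients vanish at all bad frequencies.

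The algebraic core is then as follows. The Gram determinant $\pi(n) \coloneqq \det(\kal[k]{B_n}{M}\,\kal[k]{B_n}{M}^*)$ is a polynomial in $n$ whose integer zeros are precisely the bad frequencies, and I would pick a polynomial $P$ with $P(\iu n) = \pi(n)$. Applying \cref{th-special-control} to $f_0^{\mathrm{good}}$ with this $P$ (licit since $c_n(f_0^{\mathrm{good}}) = 0$ whenever $P(\iu n) = 0$) and regularity index $k = p$ yields a fictitious \emph{full} control $\bar v = P(\partial_x)\bar w$ steering $f_0^{\mathrm{good}}$ to $0$, whose hyperbolic part satisfies $\bar w^\h \in H^{p+\deg P}_0((0,T)\times\omega)^{d_\h}$ and whose parabolic part $\bar w^\p$ may be taken arbitrarily smooth. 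For smooth $w(t)\in\C^d$ one has the telescoping identity
\[
B_n^{j-1} w = \partial_t^{j-1} w - (\partial_t - B_n)\Bigl(\textstyle\sum_{i=1}^{j-1} B_n^{j-1-i}\,\partial_t^{i-1} w\Bigr),
\]
and inserting $w = M L_{n,j}\bar v_n$ (with $\bar v_n = c_n(\bar v)$) into the right-inverse identity yields, frequency by frequency, the corrector relation $(\partial_t - B_n) z_n + M u_n = \bar v_n$ with
\[
u_n = \sum_{j=1}^{k} L_{n,j}\,\partial_t^{j-1}\bar v_n, \qquad z_n = -\sum_{j=2}^{k}\sum_{i=1}^{j-1} B_n^{j-1-i} M L_{n,j}\,\partial_t^{i-1}\bar v_n .
\]
Since $\bar v_n = P(\iu n)\,c_n(\bar w)$ and both $B_n$ and $P(\iu n)L_{n,j}$ are \emph{polynomial} in $n$ (so they are even defined, and finite, at the bad frequencies where $\bar v_n$ vanishes), the operators producing $u \coloneqq \sum_n u_n e_n$ and $z \coloneqq \sum_n z_n e_n$ from $\bar w$ are honest constant-coefficient differential operators in $(t,x)$; hence $u$ and $z$ are supported in $(0,T)\times\omega$, and $u$ together with the relevant time-derivatives of $\bar w$ vanish at $t=0,T$.

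To conclude, let $f$ be the solution of \eqref{Syst} driven by the honest control $Mu\mathds 1_\omega$ with datum $f_0^{\mathrm{good}}$. The corrector relation shows that $z$ solves $\partial_t z + \mathcal L z = (\bar v - Mu)\mathds 1_\omega$ with $z(0,\cdot)=0$, while $\bar f - f$ (with $\bar f$ the fictitious trajectory) solves the very same Cauchy problem; by uniqueness $f = \bar f - z$, so $f(T,\cdot) = \bar f(T,\cdot) - z(T,\cdot) = 0$ thanks to the vanishing time-traces of $\bar w$. Superposing with the control of $f_0^{\mathrm{bad}}$ finishes the construction.

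I expect the main obstacle to be the quantitative bookkeeping that identifies the sharp exponent $p$, namely the proof that $u\in L^2$. In $u_n=\sum_j L_{n,j}\partial_t^{j-1}\bar v_n$ the parabolic columns $L_{n,j}^\p$ act on the smooth $\bar w^\p$ and cost nothing, whereas the hyperbolic columns $L_{n,j}^\h$ act on $\bar w^\h\in H^{p+\deg P}$; in the isotropic space--time Sobolev scale the $j-1$ time-derivatives weigh like $|n|^{j-1}$, so the $j$-th term is square-integrable as soon as $p\ge (j-1)+\deg(L_{n,j}^\h)=\deg(n^{j-1}L_{n,j}^\h)$, which is precisely the stated definition of $p$. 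A pleasant cancellation makes $p$ independent of $P$: the $\deg P$ derivatives lost in producing $\bar v=P(\partial_x)\bar w$ are exactly compensated by the gain $\bar w^\h\in H^{p+\deg P}$. A secondary subtlety, which I would argue is harmless, is that the corrector $z$ is genuinely rougher than $f_0$ because of the factors $B_n^{j-1-i}$ (growing like $|n|^{2(j-1-i)}$ on the parabolic block); this roughness never feeds back into the regularity demanded of $f_0$, since $z$ only needs to lie in the (very weak) uniqueness class of the parabolic--transport Cauchy problem for the identification $f=\bar f-z$ to hold.
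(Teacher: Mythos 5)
Your treatment of $f_0^{\mathrm{good}}$ is, up to presentation, the paper's own argument: your telescoping identity and corrector are exactly \cref{th-res-alg}, your polynomial $P(\iu n)=\det(\kal[k]{B_n}{M}\kal[k]{B_n}{M}^*)$ is the one the paper feeds into \cref{th-special-control}, and your degree bookkeeping is the paper's estimate $\order(\mathcal Q^\h)\leq p+\deg P$. One repair is needed even there: because of the factors $B_n^{j-1-i}$, your corrector $z$ is in general only a distribution, so the identification $f=\bar f-z$ presupposes a uniqueness class for distributional solutions of the PDE; the paper avoids this entirely by running the corrector argument mode by mode (where it is a statement about ODEs, \cref{th-res-alg}), concluding $c_n(f(T))=0$ for every $n$, and only then checking that $u=\sum_n u_ne_n\in L^2$. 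You should do the same.

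The genuine gap is your first step. Steering $f_0^{\mathrm{bad}}$ \emph{exactly} to zero is not a finite-dimensional problem: the control $Mu\mathds 1_\omega$ excites every Fourier mode, so one must kill the finitely many bad modes \emph{and} return all the polluted good modes to zero at time $T$; as your own superposition shows, this is a particular instance of the theorem being proved, not an elementary preliminary, and it cannot be carried out before the algebraic machinery is available. The tools you cite do not fill this hole: \cref{th-regular-control} and its ingredients (\cref{th-hyp,th-par}) are proved only for $M=I_d$, and \cref{th-comp-uniq} cannot be invoked in $H^{p}(\T)^d$ because its unique-continuation hypothesis \emph{fails} precisely when bad frequencies exist --- if $\range(\kal{B_n}{M})\subsetneq\C^d$, any nonzero $X\in\ker(\kal{B_n}{M}^*)$ satisfies $M^*\eu^{tB_n^*}X\equiv 0$ by Cayley--Hamilton, so $g_0=Xe_n$ is a finite combination of generalized eigenfunctions of $\mathcal L^*$ that is invisible from $\omega$; nor can you take $H$ to be the span of the bad modes, since that space is not invariant under the control action. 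What is missing is the paper's Step 1: solve the \emph{projected} problem, i.e.\ find $u\in L^2(0,\epsilon;H^p_0(\omega))^m$ such that $(1-\Pi)S(\epsilon,f_0,Mu)=0$, which asks only that the frequencies $|n|\leq n_0$ of the solution vanish and accepts the pollution of the high ones. That problem \emph{is} finite dimensional, and its observability reduces to $c_n(g_0)\in\range(\kal{B_n}{M})\cap\ker(\kal{B_n}{M}^*)=\{0\}$ --- the one place where $f_0\in E$ is used as more than mode-wise ODE steerability. After this step the state $f_\epsilon\in H^p(\T)^d$ has $c_n(f_\epsilon)=0$ at every zero of $P$, and your algebraic argument applies verbatim on the remaining time interval. (Alternatively, your superposition idea can be salvaged by applying \cref{th-comp-uniq} once, at the very end, on $H=E\cap H^p(\T)^d$ with $\mathcal G$ the good part --- already steered to zero by your main argument --- and $\mathcal F$ the bad modes; but then the bad modes are handled \emph{after} the algebraic step, not before, and the unique continuation must be verified for the semigroup restricted to $E$, not quoted from \cref{th-regular-control}.)
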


The idea of the proof is to first choose a ``fictitious'' control that acts on every components. Then, we look at the Fourier coefficients of $f$. This transforms the control system~\eqref{Syst} into a family of finite-dimensional control systems. On each of these finite-dimensional system, we perform some algebraic manipulations, called algebraic solvability, that transform the fictitious control (that acted on every component) into an ``actual'' control (that acts only on $\range(M)$). 

We begin with the algebraic solvability result we will use, which is essentially taken from~\cite[\S2.1]{LL17}.
\begin{lemma}\label{th-res-alg}
Let $k\in \set N^*$. Let $\widetilde B \in \mathcal M_d(\mathbb C)$ and $\widetilde M \in \mathcal M_{m,d}(\mathbb R)$. Let $X_0 \in \C^d$ and $w\in H^{k-1}_0(0,T;\C^{mk})$. Write $w$ by blocks as 
\[
 w = \begin{pmatrix}w_1\\\vdots\\w_k\end{pmatrix},
\]
where $w_j \in H^{k-1}_0(\T;\C^m)$, and set $u = w_1 + w_2' + \dots + w_k^{(k-1)}$. Let $X,\widetilde X \in C^0(0,T;\C^d)$ be the solutions of 
\[
X' = \widetilde BX + \kal[k]{\widetilde B}{\widetilde M}w, \qquad \widetilde X' = \widetilde BX + \widetilde Mu,\qquad X(0) = \widetilde X(0) = X_0,
\]
where
\begin{equation*}
    \kal[k]{\widetilde B}{\widetilde M} \coloneqq \begin{pmatrix}\widetilde M&\widetilde BM&\dotsc & \widetilde B^{k-1}\widetilde M\end{pmatrix}.
\end{equation*}
Then $X(T) = \widetilde X(T)$.
\end{lemma}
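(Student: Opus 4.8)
The plan is to reduce everything to showing that the Duhamel term for the difference $Z \coloneqq X - \widetilde X$ vanishes at time $T$. Since $X(0) = \widetilde X(0) = X_0$, the difference satisfies $Z(0) = 0$ and $Z' = \widetilde B Z + g$ with source $g \coloneqq \kal[k]{\widetilde B}{\widetilde M}\, w - \widetilde M u$. By the variation of constants formula, $Z(T) = \eu^{T\widetilde B}\int_0^T \eu^{-s\widetilde B} g(s)\,\diff s$, and since $\eu^{T\widetilde B}$ is invertible it suffices to prove that $\int_0^T \eu^{-s\widetilde B} g(s)\,\diff s = 0$.

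Next I would expand the source termwise. Writing out the Kalman matrix and the definition $u = w_1 + w_2' + \dots + w_k^{(k-1)}$ gives $g = \sum_{j=1}^k \bigl(\widetilde B^{\,j-1}\widetilde M w_j - \widetilde M w_j^{(j-1)}\bigr)$, so it is enough to show that each integral
\[
I_j \coloneqq \int_0^T \eu^{-s\widetilde B}\bigl(\widetilde B^{\,j-1}\widetilde M w_j - \widetilde M w_j^{(j-1)}\bigr)\,\diff s
\]
vanishes. In the first summand $\widetilde B^{\,j-1}$ commutes with $\eu^{-s\widetilde B}$ and can be pulled out of the integral; the content of the lemma is then that the second summand reproduces exactly the same expression.

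The key computation is an integration by parts performed $j-1$ times on $\int_0^T \eu^{-s\widetilde B}\widetilde M w_j^{(j-1)}\,\diff s$, using $\frac{\diff}{\diff s}\eu^{-s\widetilde B} = -\widetilde B\,\eu^{-s\widetilde B}$. Each step transfers one derivative from $w_j$ onto the exponential, multiplying by a factor $\widetilde B$, and leaves a boundary term of the form $\bigl[\widetilde B^{\,i}\eu^{-s\widetilde B}\widetilde M w_j^{(j-2-i)}\bigr]_0^T$ for $0\leq i\leq j-2$. Here the regularity assumption $w_j \in H^{k-1}_0(0,T)$ is precisely what is needed: since $j\leq k$, the traces $w_j^{(i)}(0) = w_j^{(i)}(T) = 0$ hold for all $0\leq i\leq j-2\leq k-2$, so every boundary term vanishes. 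After the $j-1$ integrations by parts one obtains $\int_0^T \eu^{-s\widetilde B}\widetilde M w_j^{(j-1)}\,\diff s = \widetilde B^{\,j-1}\int_0^T \eu^{-s\widetilde B}\widetilde M w_j\,\diff s$, which cancels the first summand and gives $I_j = 0$ for every $j$, hence $Z(T) = 0$, that is $X(T) = \widetilde X(T)$.

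The computation itself is elementary; the only point requiring care is the bookkeeping of the boundary terms in the repeated integration by parts, and the verification that the homogeneous Sobolev regularity $H^{k-1}_0$ furnishes exactly the vanishing traces needed (no more, no less, since the highest derivative appearing at the boundary is of order $j-2\leq k-2$). There is no genuine analytic obstacle, as $s\mapsto\eu^{-s\widetilde B}$ is entire and the integrands are as smooth as required.
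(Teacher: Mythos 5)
Your proof is correct, but it takes a genuinely different route from the paper's. You argue at the level of the Duhamel formula: setting $Z = X - \widetilde X$, reducing the claim to $\int_0^T \eu^{-s\widetilde B}\bigl(\kal[k]{\widetilde B}{\widetilde M}w - \widetilde Mu\bigr)(s)\diff s = 0$, and proving this term by term via $j-1$ integrations by parts, the boundary terms vanishing because $w_j\in H^{k-1}_0$ has zero traces up to order $k-2\geq j-2$. The paper instead proves an identity between differential operators: it constructs an operator matrix $\widetilde{\mathcal M}_k$ satisfying the factorization $\mathcal P\circ\widetilde{\mathcal M}_k = \kal[k]{\widetilde B}{\widetilde M}$, where $\mathcal P(X,W) = \partial_t X - \widetilde BX - \widetilde MW$ (checked by a telescoping sum, which is the operator-level counterpart of your repeated integration by parts), then changes unknown to $Y = X - \widetilde{\mathcal M}_{k,1}w$ and invokes uniqueness for the Cauchy problem; the conditions $\widetilde{\mathcal M}_{k,1}w(0)=\widetilde{\mathcal M}_{k,1}w(T)=0$ play exactly the role of your vanishing boundary terms. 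What the factorization buys is a pointwise-in-time identity $X(t) = \widetilde X(t) + \widetilde{\mathcal M}_{k,1}w(t)$: the two trajectories coincide up to an explicit differential-operator correction supported away from the endpoints, which is the ``algebraic solvability'' viewpoint that structures all of \cref{ssec-alg}. Your argument is more elementary --- no auxiliary operator matrix, no uniqueness argument --- and yields precisely the endpoint equality $X(T)=\widetilde X(T)$, which is all the lemma asserts and all that is used later. One minor point: the statement's equation $\widetilde X' = \widetilde BX + \widetilde Mu$ contains a typo for $\widetilde X' = \widetilde B\widetilde X + \widetilde Mu$ (as the paper's own proof confirms), and your step $Z' = \widetilde BZ + g$ correctly uses the intended equation.
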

\begin{proof}
 Consider $ \widetilde{\mathcal M}_{k}$ the operator matrix with $d+m$ rows and $km$ columns defined by blocks as
 \[
 \widetilde {\mathcal M}_k \coloneqq \begin{pmatrix}
 0& -\widetilde M & \cdots & -\sum_{j=0}^{k-2}\partial_t^j\widetilde B^{k-2-j}\widetilde M\\
 -I & - \partial_t & \cdots & -\partial_t^{k-1}
 \end{pmatrix} = \begin{pmatrix}
 \widetilde{\mathcal M}_{k,1}\\ \widetilde{\mathcal M}_{k,2}
 \end{pmatrix}.
 \]
 Set also
 \[
 \mathcal P \colon (X,W) \in H^1_0(0,T;\C^d) \times L^2(0,T;\C^m) \to \partial_t X - \widetilde BX -\widetilde MW \in L^2(0,T;\C^d).
 \]
 We claim that 
 \begin{equation}\label{eq-comp-alg}
     \mathcal P \circ \widetilde {\mathcal M}_k = \kal[k]{\widetilde B}{\widetilde M}.
 \end{equation}
 Indeed, we have
 by blocks $\mathcal P \circ \widetilde {\mathcal M}_k = \begin{pmatrix}C_0 & \cdots & C_{k-1}\end{pmatrix}$ with
 \begin{align*}
     C_\ell &= -(\partial_t - \widetilde B)\sum_{j=0}^{\ell-1} \partial_t^j \widetilde B^{\ell-1-j}\widetilde M + \widetilde M \partial_t^{\ell}.
     \intertext{Then, remarking that this is a telescoping sum,}
     C_\ell &= -\sum_{j=1}^{\ell} \partial_t^{j} \widetilde B^{\ell-j}\widetilde M + \sum_{j=0}^{\ell-1} \partial_t^j \widetilde B^{\ell-j}\widetilde  M + \widetilde M\partial_t^\ell\\
     &= -\partial_t^\ell \widetilde M + \widetilde B^\ell \widetilde M - \widetilde M \partial_t^\ell,
 \end{align*}
 which proves the claimed formula~\eqref{eq-comp-alg}.
 
 Now, plug \cref{eq-comp-alg} into the differential equation $X' = \widetilde BX + \kal[k]{\widetilde B}{\widetilde M} w$, which gives
 \[
     X' = \widetilde BX + (\partial_t -\widetilde B)\widetilde{\mathcal M}_{k,1} w - \widetilde M\widetilde{\mathcal M}_{k,2}w.
 \]
 With $Y \coloneqq X - \widetilde{\mathcal M}_{k,1}w$, and remarking that $ \widetilde{\mathcal M}_{k,2} w = -u$, this can be written as $Y' = \widetilde BY + \widetilde Mu$. Since $w \in H^{k-1}_0(0,T;\C^{mk})$, $\widetilde{\mathcal M}_{k,1}w(0) = \widetilde{\mathcal M}_{k,1}w(T) = 0$. Hence $Y(0) = X(0) = \widetilde X(0)$ and $Y(T) = X(T)$. Thus $Y$ solves the same Cauchy problem as $\widetilde X$. This proves that $Y = \widetilde X$, hence $\widetilde X(T) = Y(T) = X(T)$.
\end{proof}

We can now prove \cref{th-main-p}.
\begin{proof}[Proof of \cref{th-main-p}]
Let $f_0 \in H^p(\T)^d$. Set $X_n(t) = c_n(f(t,\cdot))$ and $u_n(t) = c_n(u(t,\cdot))$. The desired conclusion $f(T,\cdot) = 0$ reads in Fourier as:  $\forall n \in \set Z$, $X_n(T) = 0$. Moreover, $X_n$ satisfies
 \begin{equation}\label{eq-fourier}
    \left\{\begin{array}{ll}
    X_n'(t) = B_n X_n(t) + M u_n(t),\quad& t\in(0,T),\\
    X_n(0) = c_n(f_0).
    \end{array}\right.
 \end{equation}
 
 First, let us give the idea of the proof: if $v$ steers $f_0$ to $0$ when $M = I$, we want to define $w_n$ by $c_n(v(t,\cdot)) = \kal[k]{B_n}{M} w_n$ (this is possible for $n$ large enough) and choose $u_n \coloneqq w_{n1} + w_{n2}' + \cdots + w_{nk}^{(k-1)}$. Then, according to \cref{th-res-alg}, the function $u_n$ steers $X_n$ from $c_n(f_0)$ to $0$. There are two problems with this crude choice of $u_n$: this construction only works for $n$ large enough, and more importantly, we have no guarantee that the support of $\sum u_n \eu^{\iu nx}$ is included in $[0,T]\times \omega$.
 
 The control strategy is to first bring frequencies less than $n_0$ to $0$ in time $\epsilon$ for some $n_0>0$ large enough to be chosen later and $\epsilon>0$ small enough so that $T> T_* + 2\epsilon$, and second use a refined version of the construction outlined above.
 
 \step{Control of a finite number of frequencies}\label{step-fd-control} Recall that $\Pi$ is the projection on frequencies larger than $n_0$ and that $E$ was defined in the statement of \cref{th-main-p}. We claim that for any $n_0 \in \N^*$, $\epsilon>0$ and $f_0 \in E$ there exists $u\in L^2(0,\epsilon;\ H^p_0(\omega))^m$ such that $(1-\Pi) S(\epsilon,f_0,Mu) = 0$.
 
 This property is equivalent to the null-controllability of the system~\eqref{Syst} projected on frequencies less or equal than $n_0$. The observability inequality associated with this problem~\cite[Theorem~2.44]{Coron07} is:
 \[
 \exists C>0,\ \forall g_0\in (1-\Pi)E,\ 
 \|\eu^{-\epsilon \mathcal L^*}g_0\|_{H^{-p}(\T)^d}^2 
 \leq C \int_0^\epsilon \|M^*\eu^{-t \mathcal L^*}g_0\|_{L^2(\omega)^m}^2 \diff t.
 \]
 Since $(1-\Pi)E$ is finite dimensional, this is equivalent to the unique continuation property
 \[
 \forall g_0\in (1-\Pi)E,\ 
 \Big(M^*\eu^{-t \mathcal L^*}g_0(x) = 0\text{ for }(t,x) \in (0,\epsilon) \times \omega \Big) \implies g_0 = 0.
 \]
 
 Let us prove this property. Let $g_0\in (1-\Pi)E$ such that $M^*\eu^{-t \mathcal L^*}g_0(x) = 0$  for $(t,x) \in (0,\epsilon) \times \omega$. Since finite sums of $\eu^{\iu nx}$ have the unique continuation property, we have for every $0<t<\epsilon$ and $|n|\leq n_0$,
 \[
 c_n(M^* \eu^{-t\mathcal L^*} g_0) = 0.
 \]
 We can rewrite this as
 \[
 M^* \eu^{-tB_n^*} c_n(g_0) = 0.
 \]
 Differentiating $\ell$ times in $t$ and evaluating at $t=0$, we get that for all $\ell\in \set N$ and $|n|\leq n_0$,
 \[
 M^* (B_n^*)^\ell c_n(g_0) = 0.
 \]
 Since we assumed that for $|n|>n_0$, $c_n(g_0) = 0$, this means that $c_n(g_0) \in \ker(\kal{B_n}{M}^*)$. But, by definition of $E$, $c_n(g_0) \in \range(\kal{B_n}{M})=\ker(\kal{B_n}{M}^*)^\perp$. Thus, $c_n(g_0) = 0$ and $g_0 = 0$. This proves the unique continuation property, and the claim.
 
 \step{Construction of $u_n$}
 We set $T' = T_* + \epsilon = T-\epsilon$.
 
 Let us write $\kal[k]{B_n}{M}^+ = Q(\iu n)/P(\iu n)$ where $Q$ is a polynomial with matrix coefficients, $P$ is a polynomial (with scalar coefficients). If we denote the adjugate matrix of a matrix $C$ by $\Adj(C)$, note that we may take
 \begin{gather*}
  Q(\iu n) = \kal[k]{B_n}{M}^* \Adj(\kal[k]{B_n}{M}\kal[k]{B_n}{M}^*);\\
  P(\iu n) = \det(\kal[k]{B_n}{M}\kal[k]{B_n}{M}^*).
 \end{gather*}
 
 Increasing $n_0$ if necessary, we may assume that for every $|n|>n_0$, $P(\iu n) \neq 0$. We first apply a control as in step~\ref{step-fd-control}: for any $f_0 \in E$, there exists $u\in L^2(0,\epsilon;\ H^p_0(\omega))^m$ such that $(1-\Pi) S(\epsilon,f_0,Mu) = 0$. Then, the resulting solution $f(\epsilon,\cdot)$ is such that $P(\iu n) = 0 \implies c_n(f(\epsilon,\cdot)) = 0$, since $P(\iu n) \not = 0$ for $|n|>n_0$ and $c_n(f(\epsilon,\cdot)) = 0$ for $|n|\leqslant n_0$. 
 
 We consider this $f(\epsilon,\cdot)$ as our new initial condition, that we denote by $f_\epsilon$, and we have to steer it to $0$ in time $T'$. Note that since $f_0 \in H^p(\T)$ and $u\in L^2(0,\epsilon;\ H^p_0(\omega))^d$, according to Duahmel's formula and the fact that  the semigroup $\eu^{-t\mathcal L}$ is strongly continuous on $H^p(\T)^d$, the state $f_\epsilon$ also belongs to $H^p(\T)^d$.
 
Let $\ell \in \N$ large enough. According to \cref{th-special-control}, there exists 
\[v\in H^{p+\deg P}_0((0,T')\times\omega)^{d_\h} \times H^\ell_0((0,T')\times\omega)^{d_\p}\]
such that $S(T',f_\epsilon,P(\partial_x)v) = 0$. Write $Q(\iu n)$ by blocks as:
\[
Q(\iu n) = \begin{pmatrix}
Q_1(\iu n) \\
\vdots\\
Q_k(\iu n)
\end{pmatrix} =\begin{pmatrix}
Q_1^\h(\iu n) & Q_1^\p(\iu n) \\
\vdots & \vdots\\
Q_k^\h(\iu n) & Q_k^\p(\iu n)
\end{pmatrix}.
\]
where the $Q_j(\iu n)$ are of size $m\times d$, the $Q_j^\h(\iu n)$ are of size $m\times d_\h$ and $Q_j^\p(\iu n)$ are of size $m\times d_\p$. Notice that the $L_{n,j}^\h$ defined in the statement of \cref{th-main-p} are $L_{n,j}^\h = Q_j^\h(\iu n)/P(\iu n)$. Set also
 \begin{equation*}
    w_{n}(t) \coloneqq Q(\iu n) c_n(v(t,\cdot)).
 \end{equation*}
 Write it by blocks as
 \begin{equation*}
     w_n(t) = \begin{pmatrix}
     w_{n,1}(t) \\ \vdots \\ w_{n,k}(t)
     \end{pmatrix}= \begin{pmatrix}
     Q_1(\iu n) c_n(v(t,\cdot)) \\ \vdots \\ Q_k(\iu n) c_n(v(t,\cdot))
     \end{pmatrix}.
 \end{equation*}
 Finally, set
 \begin{equation*}
    u_n(t) \coloneqq w_{n,1}(t) + w_{n,2}'(t) + \dots + w_{n,k}^{(k-1)}(t).
 \end{equation*}
 
 \step{Conclusion}
 Remark that for every $n\in \Z$,
 \begin{align*}
    \kal[k]{B_n}{M} w_n(t) &= \kal[k]{B_n}{M} Q(\iu n) c_n(v(t,\cdot))\\
    &= \kal[k]{B_n}{M}\kal[k]{B_n}{M}^* \Adj(\kal[k]{B_n}{M}\kal[k]{B_n}{M}^*)c_n(v(t,\cdot))\\
    &= \det(\kal[k]{B_n}{M}\kal[k]{B_n}{M}^*) c_n(v(t,\cdot))\\
    &= P(\iu n) c_n(v(t,\cdot)).
 \end{align*}
 Moreover, since $S(T',f_\epsilon,P(\partial_x)v) = 0$, the control $\widetilde v_n(t) \coloneqq P(\iu n)c_n(v(t,\cdot))$ steers $c_n(f_\epsilon)$ to $0$ for the system $X_n' = B_n X_n + \widetilde v_n$ in time $T'$. That is to say, $w_n$ steers $c_n(f_\epsilon)$ to $0$ for the system $X'_n = B_n X_n + \kal[k]{B_n}{M}w_n$ in time $T'$. Thus, according to \cref{th-res-alg}, $u_n$ steers $c_n(f_\epsilon)$ to $0$ for the system~\eqref{eq-fourier} in time $T'$.
 
 Thus, the control $u$ formally defined by $u \coloneqq \sum_{n\in \Z} u_n e_n$ is such that $S(f_\epsilon,T',Mu) = 0$. Notice that the previous sum is well-defined in $L^2(0,T';\ L^2(\set T))$.
 Remark that, if we define $u$ in the sense of distributions, 
 \[
 u = (Q_1(\partial_x)+\partial_t Q_2(\partial_x) + \dots + \partial_t^{k-1}Q_k(\partial_x)) v.
 \]
 Since $v$ is supported on $[0,T']\times \omega$, so is $u$. Consider the differential operator $\mathcal Q \coloneqq Q_1(\partial_x)+\partial_t Q_2(\partial_x) + \dots + \partial_t^{k-1}Q_k(\partial_x)$. We have $u=\mathcal Q w$.
 Write this operator by blocks as $\mathcal Q = \begin{pmatrix} \mathcal Q^\h & \mathcal Q^\p\end{pmatrix}$. In other words,
 \[
 \mathcal Q^\h \coloneqq Q_1^\h(\partial_x)+\partial_t Q_2^\h(\partial_x) + \dots + \partial_t^{k-1}Q_k^\h(\partial_x).
 \]
 The order of the differential operator $\mathcal Q^\h$ is at most
 \[
 \order(\mathcal Q^\h) \leq \max_{1\leq j \leq k} (j-1+ \deg(Q_j^\h)).
 \]
 Since $L_{n,j}^\h = Q^\h_j(\iu n)/P(\iu n)$, according to the definition of $p$ (see \cref{th-main-p}), $\order(\mathcal Q^\h) \leq p + \deg(P)$. Moreover, recall that $v\in H^{p+ \deg(P)}_0((0,T')\times \omega)^{d_\h} \times H^{\ell}_0((0,T')\times \omega)^{d_\p}$. Thus, if we choose $\ell \geq  \order(\mathcal Q^\p)$, $u \in L^2((0,T')\times \omega)$.
\end{proof}

\subsection{Upper bound on the loss of regularity}
\Cref{th-main-p} requires initial condition to be $H^{p}$ for some $p$. In this section, we provide an elementary upper bound on $p$.
\begin{prop}\label{th-kal-k0}
Assume that for $|n|$ large enough, the Kalman rank condition $\rank(\kal{B_n}{M}) = d$ holds. Let 
\[
k(n) \coloneqq \inf\{k\colon \rank(\kal[k]{B_n}{M}) = d\}\in \{-\infty\}\cap \mathbb N.
\]
Then, the sequence $(k(n))_{n\in \Z}$ is eventually constant when $|n|\to +\infty$.
We will denote $k_0 \coloneqq \lim_{|n|\to +\infty} k(n)$.
\end{prop}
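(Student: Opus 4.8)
The plan is to exploit the fact that $B_n = -n^2 B - \iu n A - K$ depends polynomially on $n$, so that every entry of the Kalman matrix $\kal[k]{B_n}{M}$ is a polynomial in $n$ with complex coefficients. The key elementary ingredient I would isolate first is the following: for any matrix $C(n)$ whose entries are polynomials in $n$, the rank $\rank(C(n))$ is eventually constant as $|n|\to+\infty$, equal to the \emph{generic rank} $\rho$, defined as the largest $r$ for which some $r\times r$ minor of $C(n)$ is a nonzero polynomial in $n$. Indeed, fixing one $r\times r$ minor realizing $\rho$, it is a nonzero polynomial and therefore vanishes for only finitely many $n$, so $\rank(C(n))\ge \rho$ once $|n|$ is large; conversely, by maximality of $\rho$, every $(\rho+1)\times(\rho+1)$ minor is identically zero as a polynomial, so $\rank(C(n))\le\rho$ for all $n$.

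First I would apply this to $C(n)=\kal[k]{B_n}{M}$ for each fixed $k$. Writing $r_k(n)\coloneqq \rank(\kal[k]{B_n}{M})$ and letting $\rho_k$ denote the corresponding generic rank, the lemma furnishes a threshold $N_k$ such that $r_k(n)=\rho_k$ for all $|n|>N_k$. The map $k\mapsto r_k(n)$ is nondecreasing, since enlarging the block structure only adds columns and cannot lower the rank. Moreover, the standing hypothesis $\rank(\kal{B_n}{M})=\rank(\kal[d]{B_n}{M})=d$ for $|n|$ large forces the generic rank $\rho_d=d$, and in particular $k(n)\le d$; hence for all large $|n|$ the infimum defining $k(n)$ is attained within the finite index range $k\in\{1,\dots,d\}$.

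Then I would take the uniform threshold $N\coloneqq \max_{1\le k\le d} N_k$, a maximum over \emph{finitely many} indices. For $|n|>N$ we have $r_k(n)=\rho_k$ simultaneously for every $k\in\{1,\dots,d\}$, whence
\[
k(n)=\min\{k\in\{1,\dots,d\}\colon \rho_k = d\},
\]
a quantity that no longer depends on $n$. This common value is the desired $k_0$, and $k(n)=k_0$ for every $|n|>N$, which proves the claim.

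I do not expect a genuine obstacle: the argument rests entirely on the polynomial dependence of $B_n$ on $n$ and on the finiteness of the relevant set of indices $k$. The only point requiring a little care is the uniformity of the threshold, which is guaranteed because the monotonicity of $k\mapsto r_k(n)$, together with the assumed full-rank condition at $k=d$, confines the relevant range of $k$ to the finite set $\{1,\dots,d\}$; a max over finitely many thresholds is then legitimate.
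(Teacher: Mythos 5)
Your proof is correct and runs on the same engine as the paper's: every entry of $\kal[k]{B_n}{M}$ is a polynomial in $n$, so a rank condition that holds at one value of $n$ holds for all sufficiently large $|n|$, since a nonzero polynomial has finitely many roots. The paper packages this slightly differently — it detects full rank via the Gram determinant $P_k(n)=\det\big(\kal[k]{B_n}{M}\,\kal[k]{B_n}{M}^*\big)$ and concludes from the fact that $k(n)$ is integer-valued, whereas you prove eventual constancy of each rank function via generic minors and take a uniform threshold over $k\in\{1,\dots,d\}$, which additionally yields the explicit identification $k_0=\min\{k\colon\rho_k=d\}$ — but these are minor variations of the same argument.
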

\begin{proof}
The rank condition $\rank([B_n|M]_k) = d$ is equivalent to $\det([B_n|M]_k[B_n|M]_k^*) \neq 0$. Let $P_k(n) = \det([B_n|M]_k[B_n|M]_k^*)$. $P_k$ is a polynomial in $n$, hence if $P_k(n_0)\neq 0$ for some $n_0$, then $P_k(n) \neq 0$ for every large enough $|n|$. Thus, for every $n_0$, there exists $n_1$ such that $k(n) \leq k(n_0)$ whenever $|n|\geq n_1$. Since $k(n)$ is integer valued, it is eventually constant. 
\end{proof}

Then, we have the following version of \cref{th-main-p}.
\begin{theorem}\label{th-main-k0}
Assume that the hypotheses \eqref{dd1d2}--\eqref{h:A1} hold, that $T>T_*$ and that for all $|n|\in \N$ large enough, the Kalman rank condition $\rank(\kal{B_n}{M}) = d$ holds. Let $k_0$ as in \cref{th-kal-k0}. Let $E$ as in \cref{th-main-p}.

Then, for every $f_0\in H^{4d(k_0-1)}(\T)^d\cap E$, there exists a control $u\in L^2([0,T]\times \omega)$ such that the solution $f$ of the parabolic-transport system~\eqref{Syst} with initial condition $f_0$ satisfies $f(T,\cdot) = 0$.
\end{theorem}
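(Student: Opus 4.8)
The plan is to apply \cref{th-main-p} with the specific choice $k = k_0$, and then to bound the regularity exponent $p$ produced by that theorem by $4d(k_0-1)$ through elementary degree counting on the rational entries of the pseudo-inverse $\kal[k_0]{B_n}{M}^+$. By \cref{th-kal-k0} and the very definition of $k_0$, the rank condition $\rank(\kal[k_0]{B_n}{M}) = d$ holds for all $|n|$ large enough, so the hypotheses of \cref{th-main-p} are satisfied with this $k$. It therefore suffices to check that the integer $p = \max_{1\le j \le k_0}\big(j-1 + \deg(L_{n,j}^\h)\big)$ of \cref{th-main-p} satisfies $p \le 4d(k_0-1)$: granting this, one has $H^{4d(k_0-1)}(\T)^d \cap E \subseteq H^{p}(\T)^d \cap E$, and the conclusion is exactly that of \cref{th-main-p}.

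For the degree count I would write $G \coloneqq \kal[k_0]{B_n}{M}$, $P(\iu n) = \det(GG^*)$ and $Q(\iu n) = G^* \Adj(GG^*)$, so that $L_{n,j}^\h = Q_j^\h(\iu n)/P(\iu n)$ in the notation of \cref{th-main-p}. Since $B_n = -n^2 B - \iu n A - K$ has entries of degree at most $2$ in $n$, the block $B_n^{s}M$ has entries of degree at most $2s$; consequently each entry of $GG^* = \sum_{s=0}^{k_0-1}(B_n^sM)(B_n^sM)^*$ has degree at most $4(k_0-1)$, each entry of $\Adj(GG^*)$ (a $(d-1)\times(d-1)$ minor) has degree at most $4(d-1)(k_0-1)$, and the $j$-th block row $(B_n^{j-1}M)^*$ of $G^*$ has entries of degree at most $2(j-1)$. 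Hence each entry of $Q_j^\h$, being a product of such factors, has degree at most $2(j-1) + 4(d-1)(k_0-1)$.

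Because $P$ is a nonzero polynomial (it is nonzero at all large $n$, where $GG^*$ is invertible), we have $\deg(P)\ge 0$, so the degree of the rational function $L_{n,j}^\h = Q_j^\h/P$ is at most $\deg(Q_j^\h)$. Combining the two estimates gives
\[
p \;\le\; \max_{1\le j \le k_0}\big(j-1 + \deg(Q_j^\h)\big) \;\le\; \max_{1\le j \le k_0}\big(3(j-1)\big) + 4(d-1)(k_0-1) \;=\; (k_0-1)(4d-1) \;\le\; 4d(k_0-1),
\]
which is the required bound.

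The only point demanding care is that one must \emph{not} try to sharpen the estimate with a lower bound on $\deg(P)$. The leading part of $GG^*$ is governed by $(B_n^{k_0-1}M)(B_n^{k_0-1}M)^*$, which is rank-deficient as soon as $m<d$, so $\deg(P)$ can be strictly smaller than the naive value $4d(k_0-1)$ and must simply be dropped via $\deg(P)\ge 0$. The reason the argument closes is that discarding the denominator entirely is already sufficient, since $3(k_0-1)+4(d-1)(k_0-1)=(k_0-1)(4d-1)$ remains below $4d(k_0-1)$ (with equality-to-$0$ in the degenerate case $k_0=1$, where $L^2$ data suffice).
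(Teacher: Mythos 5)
Your proof is correct and takes essentially the same route as the paper: apply \cref{th-main-p} with $k=k_0$ (justified by \cref{th-kal-k0}) and bound $p$ by elementary degree counting on the numerators of the entries of $\kal[k_0]{B_n}{M}^+$, discarding the denominator via $\deg(P)\geq 0$. The paper merely packages the same count in a small lemma on inverses of polynomial matrices (\cref{th-pseudo-inv-deg}) and uses the uniform block-row degree $2(k_0-1)$ instead of your per-block $2(j-1)$, arriving at the identical intermediate bound $(k_0-1)(4d-1)\leq 4d(k_0-1)$.
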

The sufficient part of \cref{th-main}, as stated in the introduction is a special case of this theorem, since we always have $k_0\leqslant d$. Here is the main lemma that allows us to bound the $p$ of \cref{th-main-p} (see also \cite[Theorem 2.1]{ABDG09} for similar considerations).
\begin{lemma}
 Let $A \in \mathcal M_d(\mathbb C)_{p}[X]$ a polynomial of degree at most $p$ with $d\times d$ matrices coefficients. Assume that for some $z_0\in \C$, $A(z_0)$ is invertible. Then, $A^{-1} \in \C_{p(d-1)}^{d\times d}(X)$, \textit{i.e.}, the coefficients of $(A(z))^{-1}$ are rational functions of $z$ of degree at most $p(d-1)$.
\end{lemma}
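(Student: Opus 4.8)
The plan is to invert $A(z)$ explicitly via the adjugate (Cramer) formula and then read off the degrees directly. Since $A(z_0)$ is invertible for some $z_0$, the scalar polynomial $\det A(z)$ is not identically zero; hence $A(z)^{-1}$ is well defined as a matrix with entries in $\C(X)$, and Cramer's rule gives
\[
A(z)^{-1} = \frac{1}{\det A(z)}\,\Adj(A(z)).
\]
It then suffices to bound the degree of the numerator $\Adj(A(z))$ and to observe that the denominator $\det A(z)$ is a nonzero polynomial.

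First I would bound the entries of the adjugate. By definition, each entry of $\Adj(A(z))$ is, up to sign, a $(d-1)\times(d-1)$ minor of $A(z)$, that is, the determinant of a $(d-1)\times(d-1)$ submatrix whose entries are among the $A_{ij}(z)$. Expanding this determinant as a signed sum of products of $d-1$ such entries and using $\deg A_{ij}\leq p$, every product has degree at most $p(d-1)$; hence each entry of $\Adj(A(z))$ is a polynomial of degree at most $p(d-1)$.

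Next, since $\det A(z)$ is a nonzero polynomial, it has degree at least $0$. Consequently each entry of $A(z)^{-1}$ is a rational function whose numerator has degree at most $p(d-1)$ and whose denominator has nonnegative degree. In particular, whether one measures the degree of a rational function by the degree of its numerator or by the difference between the degrees of numerator and denominator, each entry has degree at most $p(d-1)$, which is exactly the assertion $A^{-1}\in \C_{p(d-1)}^{d\times d}(X)$.

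There is no serious obstacle here: the argument is essentially degree bookkeeping. The only points requiring care are checking that $\det A\not\equiv 0$, so that the inverse and its degree are meaningful, and verifying that the degree convention used in the statement is compatible with the adjugate-over-determinant representation (since $\deg\det A(z)\geq 0$, the bound $p(d-1)$ holds in either reading). One could alternatively bypass the minor estimate by a Cayley--Hamilton argument, expressing $A^{-1}$ as a polynomial in $A$ of degree $d-1$ divided by $\det A$, which yields the same degree count; but the adjugate route is the most direct.
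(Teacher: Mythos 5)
Your proof is correct and follows essentially the same route as the paper: both invert $A(z)$ via the adjugate-over-determinant formula, observe that $\det A(z)\not\equiv 0$ since $A(z_0)$ is invertible, and bound each entry of $\Adj(A(z))$ by $p(d-1)$ as a sum of products of $d-1$ entries of degree at most $p$. Your additional remarks on degree conventions are fine but not needed beyond the paper's argument.
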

\begin{proof}
 Write 
 \[
 A(z)^{-1} = \frac1{\det(A(z))} \Adj(A(z)),
 \]
 where $\Adj(A(z))$ is the adjugate matrix of $A(z)$. $\det(A(z))$ and $\Adj(A(z))$ are nonzero polynomials in $z$. Moreover, the coefficients of $\Adj(A(z))$ are sums of products on $d-1$ coefficients of $A(z)$. Hence, they are polynomials of degree at most $(d-1)p$.
\end{proof}

The case we are interested in is:
\begin{corollary}\label{th-pseudo-inv-deg}
With $k_0$ as in \cref{th-kal-k0}, set, when it is defined
\[
\kal[k_0]{B_n}{M}^+ \coloneqq \kal[k_0]{B_n}{M}^* \big(\kal[k_0]{B_n}{M} \kal[k_0]{B_n}{M}^*\big)^{-1}.
\]
Then, as a function of $n$, $\kal[k_0]{B_n}{M}^+ \in \C_{2(k_0-1)(2d-1)}^{d\times d}(X)$.
\end{corollary}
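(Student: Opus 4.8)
The plan is to prove the bound by tracking polynomial degrees through the explicit formula for the pseudo-inverse, the only substantial input being the lemma stated just above.

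First I would record the degrees of the building blocks as functions of $n$. Since $B_n = -n^2 B - \iu n A - K$ is \emph{quadratic} in $n$, the block $B_n^{j}M$ has entries of degree at most $2j$, so $\kal[k_0]{B_n}{M}$ is a $d\times(mk_0)$ matrix whose entries are polynomials in $n$ of degree at most $2(k_0-1)$. Because $A$, $B$, $K$ are real, for real $n$ one has $B_n^* = -n^2 B^{\mathrm{tr}} + \iu n A^{\mathrm{tr}} - K^{\mathrm{tr}}$, which is again quadratic in $n$; hence, reading $\kal[k_0]{B_n}{M}^*$ as the analytic continuation in the variable $n$ of the conjugate transpose taken along the real axis, its entries are again polynomials of degree at most $2(k_0-1)$. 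Consequently the Gram matrix $C(n) \coloneqq \kal[k_0]{B_n}{M}\,\kal[k_0]{B_n}{M}^*$ is a $d\times d$ matrix polynomial in $n$ of degree at most $4(k_0-1)$.

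Next I would combine invertibility with the lemma. By the definition of $k_0$ in \cref{th-kal-k0}, for $|n|$ large enough $\rank \kal[k_0]{B_n}{M} = d$, equivalently $C(n)$ is invertible; fix such an $n_0$. The preceding lemma, applied to the matrix polynomial $C$ of degree at most $p = 4(k_0-1)$, then gives $C(n)^{-1} = \Adj(C(n))/\det(C(n))$ with the entries of $\Adj(C(n))$ of degree at most $4(k_0-1)(d-1)$. Writing
\[
\kal[k_0]{B_n}{M}^+ = \frac{\kal[k_0]{B_n}{M}^*\,\Adj(C(n))}{\det(C(n))},
\]
the numerator is the product of a matrix polynomial of degree at most $2(k_0-1)$ with one of degree at most $4(k_0-1)(d-1)$, hence has degree at most
\[
2(k_0-1) + 4(k_0-1)(d-1) = 2(k_0-1)\bigl(2d-1\bigr),
\]
which is exactly the asserted membership in $\C^{d\times d}_{2(k_0-1)(2d-1)}(X)$.

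The computation is routine; the two points that deserve care are the factor of two, coming from $B_n$ being quadratic rather than linear in $n$, and the fact that passing to the conjugate transpose does not increase the degree. The latter is precisely where the reality of $A$, $B$ and $K$ enters, ensuring that $\kal[k_0]{B_n}{M}^*$ remains a polynomial of degree $2(k_0-1)$ in $n$ rather than a genuinely antiholomorphic object. Everything else is additivity of degree under a matrix product together with the degree bound for the inverse supplied by the lemma.
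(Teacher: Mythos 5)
Your proof is correct and follows essentially the same route as the paper's: degree $2(k_0-1)$ for the Kalman matrix since $B_n$ is quadratic in $n$, degree $4(k_0-1)$ for the Gram matrix, the preceding lemma to bound the inverse by $4(k_0-1)(d-1)$, and additivity of degrees to reach $2(k_0-1)(2d-1)$. Your additional remark that the reality of $A$, $B$, $K$ is what keeps $\kal[k_0]{B_n}{M}^*$ polynomial in $n$ (rather than antiholomorphic) is a point the paper leaves implicit, but it does not change the argument.
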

\begin{proof}
We have $\kal[k_0]{B_n}{M}\in \C_{2(k_0-1)}^{d\times mk_0}[X]$, hence
\[
\kal[k_0]{B_n}{M} \kal[k_0]{B_n}{M}^* \in \C_{4(k_0-1)}^{d\times d}.
\]
According to the previous lemma,
\[
\big(\kal[k_0]{B_n}{M} \kal[k_0]{B_n}{M}^*\big)^{-1} \in \C_{4(k_0-1)(d-1)}^{d\times d}(X).
\]
Hence $\kal[k_0]{B_n}{M}^+ \in \C_k^{d\times d}(X)$ with $k = 4(k_0-1)(d-1) + 2(k_0-1)=2(k_0-1)(2d-1)$.
\end{proof}

\begin{proof}[Proof of \cref{th-main-k0}]
According to \cref{th-main-p}, every initial condition in $E\cap H^{p}(\T)^d$ can be steered to $0$, where $p = \deg(\kal[k_0]{B_n}{M}^+) +k_0-1$ (degree as a rational function of $n$). But according to \cref{th-pseudo-inv-deg}, $\deg(\kal[k_0]{B_n}{M}^+)\leq 2(k_0-1)(2d-1)$. Thus $p \leq 4d(k_0-1)$.
Hence, every initial condition in $E\cap H^{4d(k_0-1)}(\T)^d$ can be steered to $0$.
\end{proof}

\section{Necessary conditions for null-controllability}\label{sec-cn}

\newcommand{\wkb}{{\mathrm{WKB}}}
\subsection{Construction of WKB solutions}\label{sec-wkb}
We will give other necessary conditions of null-controllability using so called \emph{WKB solutions}, that we construct here. Using these kind of approximate solutions is standard for wave equation (see, e.g., \cite[pp.~426--428]{Hormander07} or \cite[Appendix~B]{Letrouit20} for a more elementary presentation) or Schrödinger equation (see, e.g.,~\cite[pp.~16--17]{Martinez02}). WKB solutions were also used to disprove observability of some $2\times 2$ parabolic-transport system with variable coefficients~\cite[\S3]{AMM22} (see also \cite[\S3]{AM22b} for a Navier-Stokes system with Maxwell's law). Our construction is a generalization of their construction for system of arbitrary size, which brings a few difficulties. For the sake of clarity, we construct WKB solutions only for systems with constant coefficients, which is enough for our purposes. But it is likely that such a construction could be adapted to a large class of variable-coefficients parabolic-transport systems of arbitrary sizes.

To disprove the observability inequality, these WKB solutions ought to be constructed for the adjoint system. But the parabolic-transport system~\eqref{Syst} and its adjoint have the same structure, so, in order to lighten the notations, we construct the WKB solutions for the system~\eqref{Syst}.

Let $\phi \in C^\infty([0,T]\times \set T;\,\set C)$ such that $\Im(\phi) \geq 0$ and $\partial_x \phi$ never vanishes. We search approximate solutions $g_h^\wkb(t,x)$ of the parabolic-transport system~\eqref{Syst} with the following ansatz, where $h>0$ is assumed to be small:
\begin{equation}
    \label{eq-wkb-ansatz}
    \left\{\begin{aligned}
    g_h^\wkb(t,x)  &= X_h(t,x) \eu^{\iu\phi(t, x)/h},\\
    X_h(t,x) & \sim \sum_{j\geq 0} h^j Y_j(t,x).
    \end{aligned}\right.
\end{equation}

We have
\begin{align*}
    \partial_x g_h^\wkb &= \left(\partial_x X_h + \frac\iu h\partial_x \phi X_h\right) \eu^{\iu\phi/h},\\
    \partial_t g_h^\wkb &= \left(\partial_t X_h + \frac\iu h\partial_t \phi X_h\right) \eu^{\iu\phi/h},\\
    \partial_x^2 g_h^\wkb &= \left(\partial_x^2 X_h +\frac{2\iu}h\partial_x \phi \partial_x X_h - \frac1{h^2}(\partial_x\phi)^2 X_h + \frac\iu h \partial_x^2 \phi X_h\right) \eu^{\iu\phi/h}.\\
\end{align*}
Assuming that this $g_h^\wkb$ is solution of the parabolic-transport system~\eqref{Syst}, we get
\begin{align*}
    0 =& \left(\partial_t - B\partial_x^2 +A \partial_x + K\right)(X_h\eu^{\iu \phi/h})\\
    =&\bigg[ \left(\partial_t -B \partial_x^2 +A\partial_x + K\right)X_h + \frac1h \left(\iu \partial_t \phi +\iu A \partial_x\phi - 
    \iu B\partial_x^2\phi -2\iu B \partial_x \phi \partial_x  \right)X_h + \frac1{h^2}  B(\partial_x \phi)^2 X_h \bigg] \eu^{\iu\phi/h}.
\end{align*}
Plugging in the asymptotic expansion of $X_h$, we get
\begin{align*}
    0 \sim& \sum_{j\geq -2}\bigg[ (\partial_x\phi)^2 B Y_{j+2}  + \left(\iu \partial_t \phi + \iu A \partial_x \phi - \iu B \partial_x^2 \phi - 2\iu B\partial_x \phi \partial_x\right)Y_{j+1} +\left(\partial_t -B\partial_x^2 + A\partial_x +K\right)Y_j\bigg]h^j,
\end{align*}
where, by convention, $Y_j = 0$ for $j< 0$.
We want to cancel each of the terms in this sum. Thus, we are looking for $(Y_j)_{j\geq 0}$ such that for all $j\geq -2$,
\begin{equation}\label{eq-wkb}
    (\partial_x\phi)^2 B Y_{j+2} + \left(\iu \partial_t \phi + \iu A \partial_x \phi - \iu B \partial_x^2 \phi - 2\iu B\partial_x \phi \partial_x\right)Y_{j+1} +\left(\partial_t -B\partial_x^2 + A\partial_x +K\right)Y_j = 0.
\end{equation}

\newcommand{\Yjp}[1][j]{Y_{#1}^\p}
\newcommand{\Yjh}[1][j]{Y_{#1}^\h}
\newcommand{\Yjhmu}[1][j]{Y_{#1,\mu}^\h}
\newcommand{\Yjhmuz}[1][j]{Y_{#1,\mu,0}^\h}
\newcommand{\Yjhnmu}[1][j]{Y_{#1,\neq\mu}^\h}
Solving this induction relation requires us to look at different projections of this equation. From now on, we will denote
\[
 Y_j = \begin{pmatrix} \Yjh\\ \Yjp \end{pmatrix}\quad \text{ with } \Yjh \in \set C^{d_\h} \text{ and } \Yjp \in \set C^{d_\p}.
\]

Then, recalling that $B = \big(\begin{smallmatrix}0&0\\0&D\end{smallmatrix}\big)$ and taking the parabolic components of \cref{eq-wkb} (\textit{i.e.}, the $d_\p$ last components), we get
\begin{equation}
    \label{eq-wkb-p}
    (\partial_x \phi)^2 D \Yjp = -\begin{pmatrix}0&I\end{pmatrix}\left[\left(\iu \partial_t \phi + \iu A \partial_x \phi - \iu B \partial_x^2 \phi - 2\iu B\partial_x \phi \partial_x\right)Y_{j-1} +\left(\partial_t -B\partial_x^2 + A\partial_x +K\right)Y_{j-2}\right].
\end{equation}
Since $D$ is invertible, this formula determines $\Yjp$ as a function of $Y_{j-1}$ and $Y_{j-2}$. 

Before looking at the other projections of \cref{eq-wkb}, let us recall that $A = \big(\begin{smallmatrix}A'&A_{12}\\A_{21}&A_{22}\end{smallmatrix}\big)$. We similarly write $K$ in blocks as $\big(\begin{smallmatrix}K'&K_{12}\\K_{21}&K_{22}\end{smallmatrix}\big)$. 
Then, taking the transport (\textit{i.e.}, the first $d_\h$) components of \cref{eq-wkb}, we get
\begin{equation}\label{eq-wkb-h-gen}
0 = (\iu \partial_t \phi+\iu \partial_x \phi A') \Yjh + \iu \partial_x \phi A_{12}\Yjp + 
\begin{pmatrix}  I & 0\end{pmatrix}(\partial_t +A \partial_x + K)Y_{j-1}.
\end{equation}

From now on, we choose $\phi$ of the form\footnote{\Cref{eq-wkb-p,eq-wkb-h-gen} with $j = 0$ implies $(\partial_t\phi + \partial_x\phi A')\Yjh[0] = 0$. If we want a non-trivial $\Yjh[0]$, this imposes $\phi$ to depend only on $x-\mu t$ for some $\mu \in \Sp(A')$.}
\begin{equation}
    \label{eq-wkb-phi}
    \phi(t,x) = \psi(x- \mu t),
\end{equation}
where $\mu$ is an eigenvalue of $A'$ an $\psi'$ never vanishes. With this $\phi$, \cref{eq-wkb-h-gen} reads
\begin{equation}\label{eq-wkb-h}
\begin{aligned}
0 &= 
\iu\psi'(x-\mu t)(A'-\mu) \Yjh + \iu \psi'(x-\mu t) A_{12}\Yjp+ 
\begin{pmatrix}  I & 0\end{pmatrix}(\partial_t +A \partial_x + K)Y_{j-1}\\
&=\iu\psi'(x-\mu t)(A'-\mu) \Yjh + \iu \psi'(x-\mu t) A_{12}\Yjp + (\partial_t +A' \partial_x + K')\Yjh[j-1] + (A_{12}\partial_x + K_{12})\Yjp[j-1].
\end{aligned}
\end{equation}

Denote by $P_{\mu}'$ the projection on the eigenspace of $A'$ associated with $\mu$ along the other eigenspaces. We consider $\Yjhmu\in \range(P'_\mu)$ defined by $\Yjhmu = P'_\mu \Yjh$. Similarly, we set $\Yjhnmu \in \ker(P'_\mu)$ as $\Yjhnmu = (I-P'_\mu) \Yjh$. Finally, we write in blocks $A'$ and $K'$ along the sum $\set R^d = \range(P'_\mu) \oplus \ker(P'_\mu)$ as
\[
A' = \begin{pmatrix} \mu &0\\0&A'_{22} \end{pmatrix}, \quad
  K' = 
\begin{pmatrix}
  K'_{11}&K'_{12}\\ 
  K'_{21}& K'_{22}
\end{pmatrix},
\]
where $A'_{22}\in\mathcal L(\ker(P'_{\mu}))$, $K'_{11} = P'_\mu K' P'_\mu \in \mathcal L(\range(P'_{\mu}))$, $K'_{12} \in \mathcal L(\ker(P'_\mu),\range(P'_\mu))$, etc. Then, projecting \cref{eq-wkb-h} on $\ker(P'_\mu)$ along $\range(P'_\mu)$ (\textit{i.e.}, multiplying by $(I-P'_\mu)$), we get
\begin{equation}
    \label{eq-wkb-nmu}
    \iu \psi'(x-\mu t) (A'_{22} - \mu) \Yjhnmu = -(I-P'_\mu) \left[\iu \psi'(x-\mu t) A_{12}\Yjp +
    \begin{pmatrix}  I & 0\end{pmatrix}(\partial_t +A \partial_x + K)Y_{j-1}\right].
\end{equation}
Since $P'_\mu$ is the projection on the eigenspace of $A'$ associated with the eigenvalue $\mu$, $A'-\mu$ is invertible on $\ker(P'_\mu)$, \textit{i.e.}, $A'_{22}-\mu$ is invertible. Hence, \cref{eq-wkb-nmu} determines $\Yjhnmu$ as a function of $\Yjp$ and $Y_{j-1}$.

Finally, we project \cref{eq-wkb-h} on $\range(P'_\mu)$, we get
\begin{equation}\label{eq-wkb-mu-1}
0 = (\partial_t + \mu \partial_x + K'_{11})\Yjhmu + K'_{12}\Yjhnmu + P'_\mu(A_{12}\partial_x + K_{12})\Yjp + \iu \psi'(x-\mu t)P'_\mu A_{12} \Yjp[j+1].
\end{equation}
We then use \cref{eq-wkb-p} to express $\Yjp[j+1]$ as
\[
\Yjp[j+1] = D_1 \Yjh + D_2 \Yjp + D_3 Y_{j-1},\ \text{ with }\ D_1 = -\frac\iu{\psi'(x-\mu t)} D^{-1}A_{21},
\]
and where $D_2$ and $D_3$ are matrix first or second-order differential operators. Their specific expressions do not matter for our purpose. Plugging this in \cref{eq-wkb-mu-1}, we get
\begin{multline}\label{eq-wkb-mu}
(\partial_t + \mu \partial_x + K'_{11} + P'_\mu A_{12}D^{-1}A_{21}P'_\mu )\Yjhmu \\
= - K'_{12}\Yjhnmu - P'_\mu(A_{12}\partial_x + K_{12})\Yjp - \iu \psi'(x-\mu t)P'_\mu A_{12}(D_1(I-P'_\mu) \Yjhnmu + D_2 \Yjp + D_3 Y_{j-1}).
\end{multline}
If we chose an initial condition $\Yjhmuz$ for $\Yjhmu$, \cref{eq-wkb-mu} determines $\Yjhmu$ as a function of $Y^\h_{j,\mu,0}$, $\Yjhnmu$, $\Yjp$ and $Y_{j-1}$. 

We have seen that if $\phi$ is given by \cref{eq-wkb-phi}, the $(Y_j)_{j\in \set N}$ that solve the WKB recurrence equation~\eqref{eq-wkb} are given by \cref{eq-wkb-p,eq-wkb-nmu,eq-wkb-mu}.

To be rigorous, we have only proved that \emph{if $(Y_j)_{j^n \set N}$ solves \cref{eq-wkb}, then $\Yjp$, $\Yjhnmu$ and $\Yjhmu$ solves \cref{eq-wkb-p,eq-wkb-nmu,eq-wkb-mu} respectively}, but not the reciprocal (which is what we are actually interested in). However, we easily rephrase the computations of this section as a sequence of equivalences:
\begin{itemize}
    \item $\forall j \geq -2$, $Y_j$ solves \cref{eq-wkb} if and only if;
    \item $\forall j \geq 0$, $\Yjp$ solves \cref{eq-wkb-p}, $\Yjhnmu$ solves \cref{eq-wkb-nmu} and $\Yjhmu$ solves \cref{eq-wkb-mu-1} if and only if; 
    \item $\forall j \geq 0$, $\Yjp$ solves \cref{eq-wkb-p}, $\Yjhnmu$ solves \cref{eq-wkb-nmu} and $\Yjhmu$ solves \cref{eq-wkb-mu}.
\end{itemize}

We summarize the computations of this section in the following proposition:
\begin{prop}
 \label{th-wkb}
 Let $\psi \in C^\infty(\set T)$ such that $\psi'$ never vanishes and $\Im(\psi)\geq 0$. Let $\mu \in \Sp(A')$ and set $\phi$ as in \cref{eq-wkb-phi}.
 
 For every $j \geq 0$, let $\Yjhmuz \in C^\infty(\set T; \ker(A'-\mu))$. Define $(\Yjp)_{j\geq -2}$, $(\Yjhnmu)_{j\geq -2}$ and $(\Yjhmu)_{j\geq -2}$ with the following recursive procedure:
 \begin{itemize}
     \item set $\Yjp[-2] = \Yjp[-1] = 0$, $\Yjhnmu[-2] = \Yjhnmu[-1] = 0$, $\Yjhmu[-2] = \Yjhmu[-1] = 0$;
     \item if $\Yjp[k]$, $\Yjhnmu[k]$, $\Yjhmu[k]$ are defined for $-2\leq k \leq j-1$, define $\Yjp$ with \cref{eq-wkb-p}, $\Yjhnmu$ with \cref{eq-wkb-nmu} and $\Yjhmu$ with \cref{eq-wkb-mu} with initial condition $\Yjhmuz$.
 \end{itemize}
 
 For $j\geq 0$, set $Y_j(t,x) = \left(\begin{smallmatrix}\Yjhmu + \Yjhnmu\\\Yjp\end{smallmatrix}\right)$. Let $q\in \set N$. Let the function $g_h^\wkb$ be defined by
\begin{equation}\label{true-WKB}
 g_h^\wkb(t,x) = \sum_{j=0}^q h^j Y_j \eu^{\iu\phi(t,x)/h}.
 \end{equation}
 Then, defining $r_h$ by
 \[
 (\partial_t - B \partial_x^2 + A\partial_x +K)g_h^\wkb(t,x) = r_h(t,x) \eu^{\iu\phi(t,x)/h},
 \]
 for every $k \in \N$, $\ell \in \N$, $t \in [0,T]$ and $x\in \T$,
 \[
 |\partial_t^k \partial_x^\ell r_h(t,x)|\leq C_{k,\ell} h^{q-1}.
 \]
\end{prop}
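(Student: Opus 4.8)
The plan is to show that the remainder $r_h$ arises entirely from the tail of the asymptotic expansion, i.e. from the fact that we truncate the formal series \eqref{eq-wkb-ansatz} at order $q$ and that the $Y_j$ for $j > q$ are not included. Concretely, I would apply the operator $\partial_t - B\partial_x^2 + A\partial_x + K$ directly to the finite sum \eqref{true-WKB} using the three derivative formulas for $\partial_x g_h^\wkb$, $\partial_t g_h^\wkb$, $\partial_x^2 g_h^\wkb$ computed just before \eqref{eq-wkb}. Factoring out $\eu^{\iu\phi/h}$, this produces a finite Laurent polynomial in $h$ whose coefficients are exactly the left-hand sides of the WKB recurrence \eqref{eq-wkb}, organized by powers $h^j$ for $-2 \leq j \leq q$.

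The key observation is that, by construction, the $Y_j$ (built via \cref{th-wkb} from \cref{eq-wkb-p,eq-wkb-nmu,eq-wkb-mu}) satisfy the recurrence \eqref{eq-wkb} exactly for every $j \geq -2$, \emph{as long as the relevant $Y_{j+1}, Y_{j+2}$ are available}. When we truncate at order $q$, the coefficients of $h^j$ for $-2 \leq j \leq q-2$ cancel identically, because each such coefficient involves only $Y_{j}, Y_{j+1}, Y_{j+2}$ with indices $\leq q$, and these satisfy \eqref{eq-wkb} by the construction in \cref{th-wkb}. Thus the only surviving terms in $h^j r_h$ are those coefficients for which the recurrence is ``incomplete'', namely the coefficients of $h^{q-1}$ and $h^{q}$, which pick up the missing contributions of $Y_{q+1}$ and $Y_{q+2}$ (which we set to zero). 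After factoring, this leaves
\[
r_h = h^{q-1}\big[\left(\iu \partial_t \phi + \iu A \partial_x \phi - \iu B \partial_x^2 \phi - 2\iu B\partial_x \phi \partial_x\right)Y_q + h\left(\partial_t -B\partial_x^2 + A\partial_x +K\right)Y_q\big],
\]
plus a term of order $h^{q}$ coming from the top index $j=q$ in the $\left(\partial_t - B\partial_x^2 + A\partial_x + K\right)Y_j$ slot. In all cases the lowest power of $h$ appearing is $h^{q-1}$.

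From here the estimate is routine bookkeeping. Each $Y_j$ is a fixed $C^\infty([0,T]\times\set T)$ function (the recursive procedure of \cref{th-wkb} produces smooth $Y_j$ because $\psi'$ never vanishes, $D$ and $A'_{22}-\mu$ are invertible, and the operators $D_2, D_3$ are differential operators with smooth coefficients, so smoothness is preserved at each step). The surviving expression for $r_h$ is a fixed differential operator in $(t,x)$ — with coefficients depending smoothly on $\phi$ and hence on $(t,x)$ — applied to the finitely many smooth functions $Y_q, Y_{q-1}$, multiplied by a scalar power of $h$ that is at least $h^{q-1}$. Differentiating $\partial_t^k\partial_x^\ell$ and taking the supremum over the compact set $[0,T]\times\set T$ yields a finite constant $C_{k,\ell}$ independent of $h \in (0,1]$, giving $|\partial_t^k\partial_x^\ell r_h(t,x)| \leq C_{k,\ell} h^{q-1}$.

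I expect the main subtlety to be keeping the index bookkeeping honest: one must verify carefully which coefficients of $h^j$ vanish and which survive, since the recurrence \eqref{eq-wkb} couples three consecutive indices $j, j+1, j+2$, so setting $Y_{q+1} = Y_{q+2} = 0$ spoils the cancellation precisely at $j = q-1$ and $j=q$. A secondary, purely technical point is confirming that the recursive construction in \cref{th-wkb} indeed yields globally smooth $Y_j$ on $[0,T]\times\set T$; this follows because \eqref{eq-wkb-mu} is a linear transport equation in the variable $x-\mu t$ with smooth coefficients and smooth source (built from the already-constructed, smooth lower-order data), solvable globally with smooth solution given the smooth initial data $\Yjhmuz$.
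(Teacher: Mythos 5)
Your proposal is correct and takes essentially the same route as the paper: the paper's proof of \cref{th-wkb} is precisely the computation of \cref{sec-wkb} (derive the recurrence~\eqref{eq-wkb} from the ansatz, solve it through the projections~\eqref{eq-wkb-p}, \eqref{eq-wkb-nmu}, \eqref{eq-wkb-mu}), and the estimate follows, as you say, because truncation at order $q$ makes every coefficient of $h^{j}$ with $-2\leq j\leq q-2$ vanish, leaving only the coefficients of $h^{q-1}$ and $h^{q}$, which are fixed smooth functions on the compact set $[0,T]\times\T$. One bookkeeping slip that does not affect the conclusion: the surviving coefficient of $h^{q-1}$ is $\left(\iu \partial_t \phi + \iu A \partial_x \phi - \iu B \partial_x^2 \phi - 2\iu B\partial_x \phi \partial_x\right)Y_q + \left(\partial_t -B\partial_x^2 + A\partial_x +K\right)Y_{q-1}$, whereas your displayed formula puts $Y_q$ in the second slot and then double-counts the $h^{q}$ term $\left(\partial_t -B\partial_x^2 + A\partial_x +K\right)Y_q$; since the estimate only uses that the surviving coefficients are $h$-independent smooth functions, the argument stands as written.
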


\begin{remark}\label{rk-wkb-phase}
Assume that $h^{-1} \in \set N$. Then, replacing $\phi$ by $\phi +2k\pi$ in \cref{eq-wkb-ansatz} does not change the WKB solution $g_h^\wkb$. Hence, $\phi$ can be defined up to a factor $2k\pi$. That way, $\phi$ can be non-periodic, as long as $\phi \mod 2\pi$ is. Thus, we can choose
\[
\phi(t,x) = 
\iu \varphi (x-\mu t) + n_0 (x-\mu t)\text{ with }\ \mu \in \Sp(A'),\ \varphi \geq 0,\ \text{ and }\ n_0 \in \N\setminus\{0\}.
\]
\end{remark}

These WKB solutions will be used to disprove observability inequalities that often feature a projection on high frequencies. To deal with these projection on high frequencies, we will use the following lemma.
\begin{lemma}\label{th-wkb-nonstationary}
Let $n\in \Z$. Under the assumptions of \cref{th-wkb}, for every $\ell \in \N$, we have uniformly in $0\leq t \leq T$, in the limit $h \to 0^+$,
\[
 (g_h^\wkb(t,\cdot),e_n)_{L^2} = \bigO(h^\ell).
\]
\end{lemma}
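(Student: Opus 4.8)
The plan is to reduce the claim to an oscillatory-integral estimate and apply the non-stationary phase principle. Writing out the inner product, we have
\[
(g_h^\wkb(t,\cdot),e_n)_{L^2} = \int_\T \Big(\sum_{j=0}^q h^j Y_j(t,x)\Big)\,\eu^{\iu\phi(t,x)/h}\,\eu^{-\iu nx}\diff x,
\]
and since the finitely many $Y_j$ are smooth and bounded uniformly in $t$, it suffices to bound, for each fixed $j$, an integral of the form $\int_\T a(t,x)\eu^{\iu(\phi(t,x) - hnx)/h}\diff x$ where $a$ is smooth. The total phase is $\Phi_h(t,x) = \phi(t,x) - hnx$, whose $x$-derivative is $\partial_x\phi(t,x) - hn$. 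By the assumption in \cref{th-wkb} that $\psi'$ never vanishes, and recalling $\phi(t,x) = \psi(x-\mu t)$, we have $\partial_x\phi(t,x) = \psi'(x-\mu t)$, which is bounded away from $0$ on the compact $[0,T]\times\T$. Hence for $h$ small enough (so that $h|n|$ is smaller than this lower bound, $n$ being fixed), $\partial_x\Phi_h$ is bounded below in modulus uniformly in $t$ and $x$.

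First I would make this quantitative: let $c>0$ be such that $|\psi'(\cdot)|\geq c$ on $\T$, and fix $h_0$ so that $h_0|n| \leq c/2$; then $|\partial_x\Phi_h(t,x)|\geq c/2$ for all $0<h\leq h_0$, all $t$, and all $x$. The standard device is then integration by parts against the operator $L = \frac{h}{\iu\,\partial_x\Phi_h}\,\partial_x$, which satisfies $L\,\eu^{\iu\Phi_h/h} = \eu^{\iu\Phi_h/h}$. Integrating by parts $\ell$ times on the torus (no boundary terms appear, since everything is periodic in $x$) converts the integral into
\[
\int_\T \big((L^*)^\ell a\big)(t,x)\,\eu^{\iu\Phi_h(t,x)/h}\diff x,
\]
where $L^*$ is the formal transpose of $L$. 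Each application of $L^*$ produces a factor of $h$ together with derivatives of $a$ and of $1/\partial_x\Phi_h$; because $\partial_x\Phi_h$ is bounded below and $\psi,\,a$ are smooth with derivatives bounded uniformly in $t$, each such coefficient is bounded uniformly in $t$ and in $h\leq h_0$. Therefore $(L^*)^\ell a = \bigO(h^\ell)$ uniformly, and since $\Im(\phi)\geq 0$ ensures $|\eu^{\iu\Phi_h/h}|\leq 1$, the integral is $\bigO(h^\ell)$ uniformly in $t$. Summing over the finitely many $j$ (each extra $h^j$ only helps) gives the stated bound.

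The main point requiring care — and the only genuine obstacle — is keeping all estimates \emph{uniform in $t\in[0,T]$}, so that the conclusion holds uniformly as asserted. This is where the specific structure $\phi(t,x)=\psi(x-\mu t)$ is essential: the lower bound $|\partial_x\phi(t,x)| = |\psi'(x-\mu t)| \geq c$ and all the higher $x$-derivatives of $\phi$ depend on $t$ only through the translation $x\mapsto x-\mu t$, so their sup-norms over $x$ are independent of $t$; likewise the $Y_j$ are continuous on the compact $[0,T]\times\T$ and hence have $t$-uniform bounds. One should also note that $\ell$ is arbitrary and $h_0$ depends on $n$ and $c$ but not on $t$, which is consistent with the asymptotic statement for fixed $n$ as $h\to0^+$. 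I would remark that, in the phase chosen in \cref{rk-wkb-phase} where $\phi(t,x)=\iu\varphi(x-\mu t)+n_0(x-\mu t)$, the same argument applies verbatim with $\partial_x\Phi_h = \iu\varphi'(x-\mu t) + n_0 - hn$, whose modulus is at least $|n_0 - hn|$ and thus bounded below for $h$ small; this is the form actually used in the applications, and the estimate is what lets the WKB packet evade any fixed low frequency $e_n$.
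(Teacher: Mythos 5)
Your proof is correct and takes essentially the same route as the paper: a non-stationary phase argument, integrating by parts $\ell$ times against an operator $L$ satisfying $hL\,\eu^{\iu\Phi/h}=\eu^{\iu\Phi/h}$, with uniformity in $t\in[0,T]$ coming from compactness and the translation structure $\phi(t,x)=\psi(x-\mu t)$. The only cosmetic difference is that the paper absorbs the factor $\eu^{-\iu nx}$ into the amplitude $w_{t,h,n}$, so that its operator $L=(\iu\psi'(x-\mu t))^{-1}\partial_x$ is independent of $h$ and no smallness condition on $h|n|$ is needed, whereas you keep $-hnx$ in the phase; both arguments are valid.
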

\begin{proof}

 The scalar product $(g_h^\wkb(t,\cdot),\eu^{\iu nx})_{L^2}$ can be written as
 \[
 (g_h^\wkb(t,\cdot),e_n)_{L^2} = \int_\T w_{t,h,n}(x) \eu^{\iu\psi(x-\mu t)/h} \diff x,
 \]
 where 
 \[
 w_{t,h,n}(x) \coloneqq \sum_{j=0}^q h^j Y_j(t,x) \eu^{-\iu n x}.
 \]
 Note that $w_{t,h,n}$ and its derivative are uniformly bounded for $0\leq t \leq T$ and $h \leq 1$.
 Consider the differential operator $L \coloneqq(\iu\psi'(x-\mu t))^{-1}\partial_x$. Here, we use the fact that $\psi'$ never vanishes. This operator is such that
 \[
 hL \eu^{\iu\psi(x-\mu t)/h} = \eu^{\iu\psi(x-\mu t)/h}.
 \]
 Thus, denoting $L^*$ the adjoint of $L$, by integration by parts,
 \[
 (g_h^\wkb(t,\cdot),e_n)_{L^2} = h^l\int_\T (\overline{L^*})^\ell (w_{t,h,n})(x) \eu^{\iu\psi(x-\mu t)/h} \diff x.
 \]
 The operator $L^*$ is a differential operator independent of $h$. Hence, by definition of $w_{t,h,n}
 $
 \[
 (g_h^\wkb(t,\cdot),e_n)_{L^2} = \bigO(h^\ell).\qedhere
 \]
\end{proof}


\subsection{The parabolic-transport system is not null controllable in small time}\label{ssec-st}
We now prove that the time condition $T\geqslant T_*$ is necessary (remark that the equality case $T=T^*$ remains an open question). It was already proved to be necessary for the null-controllability of every $L^2$ initial conditions~\cite{BKL20}. But this proof did not exclude the null-controllability of every $H^k$ initial condition when $T<T_*$.
\begin{prop}\label{th-min-time}
Let $T>0$ and assume that there exists $N\in\N^*$ and $k\in \N$ such that every initial conditions in $H^k(\T)^d\cap \{\sum_{|n|>N} X_n \eu^{\iu n x}\}$ for the parabolic-transport system~\eqref{Syst} can be steered to $0$ in time $T$. Then $T\geq T_*$.
\end{prop}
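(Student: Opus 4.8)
The plan is to contradict the observability inequality dual to the assumed null-controllability. Write $F_{>N}\coloneqq\{\sum_{|n|>N}X_n e_n\}\subset L^2(\T)^d$. Since $\mathcal L$ has constant coefficients it is diagonal in the Fourier basis, so $F_{>N}$ is invariant under the adjoint semigroup $\eu^{-t\mathcal L^*}$. Hence, by the usual duality between null-controllability and observability \cite[Theorem~2.44]{Coron07} applied on the invariant subspace $H^k(\T)^d\cap F_{>N}$, the hypothesis yields a constant $C>0$ with
\[
\|\eu^{-T\mathcal L^*}g\|_{H^{-k}(\T)^d}^2\leq C\int_0^T\|M^*\eu^{-t\mathcal L^*}g\|_{L^2(\omega)^m}^2\diff t,\qquad g\in F_{>N}.
\]
I will build a family $(g_h)_{0<h\leq 1}\subset F_{>N}$ for which, when $T<T_*$, the left-hand side decays only polynomially in $h$ while the right-hand side is exponentially small, a contradiction as $h\to0^+$ (this refines the $L^2$ statement of \cite{BKL20}, where the weak $H^{-k}$ norm on the left was not an issue).

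The test functions come from a complex-phase (Gaussian-beam) WKB expansion applied to the adjoint system, which has the same structure \eqref{dd1d2}--\eqref{h:A1}: its hyperbolic transport block is $-A'^*$, with $\Sp(-A'^*)=-\Sp(A')$ by \eqref{h:A1}, so the associated $\mu_*$ and $T_*$ are unchanged. Fix an eigenvalue $c$ of $-A'^*$ with $|c|=\mu_*$ and, following \cref{rk-wkb-phase}, a phase $\phi(t,x)=\iu\varphi(x-ct)+n_0(x-ct)$ with $n_0\in\N^*$, $h^{-1}\in\N$, and $\varphi\in C^\infty(\T)$, $\varphi\geq0$, vanishing exactly on a short interval $J$ of length $\delta$ and bounded below away from $J$. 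The decisive geometric point, and the place where $T<T_*$ enters, is the choice of $J$: since $T<T_*$ we have $\mu_* T<\ell(\omega)$, so taking $\delta$ small enough that $\delta+\mu_* T<\ell(\omega)$ we may place $J$ inside the largest connected component $I$ of $\T\setminus\omega$ so that the swept set $\bigcup_{t\in[0,T]}(J+ct)$, an interval of length $\delta+\mu_* T$, stays strictly inside $I$, hence at positive distance from $\overline\omega$. Applying \cref{th-wkb} to the adjoint with this phase, a nonzero leading transport profile supported near $J$, and the expansion truncated at some order $q$, produces $g_h^\wkb$ solving the adjoint equation up to a residual of size $\bigO(h^{q-1})$ in $L^2(\T)^d$, uniformly on $[0,T]$.

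Now the estimates. By \cref{th-wkb-nonstationary} the frequencies $|n|\leq N$ of $g_h^\wkb$ carry only $\bigO(h^\ell)$ of its $L^2$ mass for every $\ell$, so, denoting by $P_{>N}$ the orthogonal projection onto $F_{>N}$, the choice $g_h\coloneqq P_{>N}g_h^\wkb(0,\cdot)\in F_{>N}$ satisfies $\|g_h-g_h^\wkb(0,\cdot)\|_{L^2}=\bigO(h^\ell)$ for all $\ell$. Setting $G_h(t)\coloneqq\eu^{-t\mathcal L^*}g_h$ and using Duhamel's formula together with the boundedness of $\eu^{-t\mathcal L^*}$ on $[0,T]$, one gets $\|G_h(t)-g_h^\wkb(t,\cdot)\|_{L^2}=\bigO(h^{q-1})$ uniformly in $t\in[0,T]$. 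For the right-hand side, $\Im\phi(t,x)=\varphi(x-ct)\geq c_0>0$ whenever $x\in\omega$ and $t\in[0,T]$ (by the placement of $J$), so $|g_h^\wkb(t,x)|\leq C\eu^{-c_0/h}$ for $x\in\omega$, whence $\int_0^T\|M^*G_h\|_{L^2(\omega)}^2\diff t=\bigO(\eu^{-2c_0/h}+h^{2(q-1)})$. For the left-hand side, $g_h^\wkb(T,\cdot)$ is a wave packet at frequency $\nu=n_0/h$ whose leading profile does not vanish (it solves a transport equation with the chosen nonzero datum, and $\varphi\equiv0$ on $J$), so $\|g_h^\wkb(T,\cdot)\|_{L^2}\gtrsim1$ and, by frequency localization, $\|G_h(T)\|_{H^{-k}}\gtrsim\nu^{-k}\|g_h^\wkb(T,\cdot)\|_{L^2}-\bigO(h^{q-1})\gtrsim h^{k}$.

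Choosing $q>k+1$, the left-hand side is $\gtrsim h^{2k}$ while the right-hand side is $\smallO(h^{2k})$ as $h\to0^+$, which contradicts the observability inequality; therefore $T\geq T_*$. When $\mu_*=0$ (so $T_*=+\infty$), one runs the same argument with a stationary concentration ($c=0$, the packet never leaving $I$), disproving controllability for every finite $T$. I expect the crux to be the matching of the two scales: since the data are controlled only in $H^k$, the dual norm $H^{-k}$ is very weak on the high frequencies where the packet concentrates, so a purely oscillatory high-frequency packet would not suffice; it is essential that the positive imaginary part of the phase makes the observation \emph{exponentially} small off the concentration set, which is what beats the merely polynomial lower bound on $\|G_h(T)\|_{H^{-k}}$, while keeping the WKB residual below that polynomial threshold by taking the truncation order $q$ large.
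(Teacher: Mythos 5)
Your proposal is correct, and it follows the paper's overall strategy --- dualize the controllability assumption into an observability inequality and contradict it with WKB solutions of the adjoint system concentrated on a characteristic staying inside the largest component of $\T\setminus\omega$ --- but you execute the two delicate technical steps by genuinely different, and in both cases more elementary, means. First, where the paper tests the dual inequality on data of the form $\partial_x^k g_0$, trading the $H^{-k}$ norm on the left for $k$ derivatives on the observation term (which then forces derivative error estimates $\|\partial_x^j(g_h^{\mathrm{WKB}}-g_h)\|_{L^2}\leq C_j h^{k-j+1}$ on the WKB approximation), you keep the $H^{-k}$ norm and absorb it by frequency localization: the packet lives at frequencies $O(h^{-1})$, so the dual norm only costs a factor $h^k$, which is dominated by the exponential smallness of the observation on $\omega$ once the truncation order satisfies $q>k+1$; only $L^2$ error bounds are then needed. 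Second, where the paper chooses a phase with a nondegenerate minimum at a single point and invokes Laplace's method to get $\|g_h^{\mathrm{WKB}}(T,\cdot)\|_{L^2}^2=c\sqrt h+O(h^{3/2})$, you flatten $\varphi$ on an interval $J$, which yields the stronger and more elementary bound $\|g_h^{\mathrm{WKB}}(T,\cdot)\|_{L^2}\gtrsim 1$. Note also that you take the eigenvalue of \emph{minimum} modulus, $|c|=\mu_*$, which is indeed what the geometry requires (the swept set has length $\delta+\mu_*T<\ell(\omega)$ precisely because $T<T_*$); the paper's proof says ``maximum modulus'', which is consistent with its definition of $T_*$ only when all eigenvalues of $A'$ share the same modulus, and is presumably a slip --- your choice is the right one, and you also cover $\mu_*=0$ explicitly. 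In a final write-up you should spell out three points you currently leave as assertions: (i) the restricted observability inequality on $F_{>N}$, which follows from the invariance of $F_{>N}$ under $\eu^{-t\mathcal L^*}$ together with \cite[Lemma~2.48]{Coron07} exactly as in the paper; (ii) the frequency-localization lower bound $\|g_h^{\mathrm{WKB}}(T,\cdot)\|_{H^{-k}}\gtrsim h^k$, which follows from $\|\partial_x g_h^{\mathrm{WKB}}(T,\cdot)\|_{L^2}=O(h^{-1})$, $\|g_h^{\mathrm{WKB}}(T,\cdot)\|_{L^2}\gtrsim 1$ and a Chebyshev argument on the Fourier coefficients; and (iii) that the leading profile is chosen nonzero \emph{on} $J$ (not merely near it), so that the order-one mass indeed sits where $\varphi$ vanishes and is transported, by \cref{eq-wkb-mu}, along the same characteristics as the phase.
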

\begin{proof}
 Let $\mu \in \Sp(A')$ with maximum modulus. By definition, $T_* = \ell(\omega)/|\mu|$. Let $T<T_*$.
 
 We aim to disprove that the observability inequality associated to the control problem of \cref{th-min-time} using the WKB solution constructed above. We claim that this observability inequality is: there exists $C>0$ such that for every $g_0\in L^2(\set T)^d$, the solution $g$ of
 \begin{equation}\label{eq-adj}
     (\partial_t - B^*\partial_x - A^*\partial_x + K^* \partial_x) g(t,x) = 0, \quad g(0,x) = g_0(x)
 \end{equation}
 satisfies
 \begin{equation}\label{eq-obs-time}
    \|\pi_N g(T,\cdot)\|_{H^{-k}(\set T)} \leq C \|M^*g\|_{L^2((0,T)\times \omega)}, 
 \end{equation}
 where $\pi_N\colon \sum_{n\in\set Z} X_n \eu^{\iu n x} \in L^2(\set T) \mapsto \sum_{|n|> N} X_n \eu^{\iu nx}$. 
 
 This is proved using a standard duality lemma, see \textit{e.g.}~\cite[Lemma~2.48]{Coron07} with $C_2 = \eu^{-t\mathcal L}\circ \pi_N^* \circ \iota_k$ and $C_1 \colon u\in L^2((0,T)\times \omega) \mapsto \int_0^T \eu^{-(T-t)\mathcal L} M u(t)\diff t$, where $\iota_k$ is the injection $H^k(\set T) \to L^2(\set T)$. Note that $\pi_N^*$ is the injection $\{\sum_{|n|> N} X_n \eu^{\iu nx}\} \to L^2(\set T)$, and that $\iota_k^*$ is a bijective isometry $H^{-k}(\set T) \to H^k(\set T)$ (\cite[Lemma~33]{BKL20}). 
 
 Testing this observability inequality on initial conditions of the form $\partial_x^k g_0$ instead of $g_0$, we get
 \begin{equation}\label{eq-obs-Hk}
    \|\pi_N g(T,\cdot)\|_{L^2(\set T)} \leq C \|\partial_x^kM^*g\|_{L^2((0,T)\times \omega)}, 
 \end{equation}
 
 \step{Construction of the counterexample}
 Let $T < T_*$. There exists $x_0\notin \overline{\omega}$ such that $x_0-\mu t \notin \overline{\omega}$ for every $0\leq t \leq T$. Choose $\varphi \in C^\infty(\set T)$ real-valued such that $\varphi(x_0) = 0$, $\varphi''(x_0) =1$ and $\varphi(x)>0$ for every $x\neq x_0$. Then, choose $\phi(t,x) = \iu \varphi(x+\mu t) + (x+\mu t) n_0$, as we did in \cref{rk-wkb-phase} (the change from $\mu$ to $-\mu$ is because we are considering $-A^*$ instead of $A$).
 
 This choice of $\phi$ ensures that whatever the choices of the $Y_j$, the WKB solution $g_h^\wkb$ defined by~\cref{eq-wkb-ansatz} stays concentrated around $x_0 + \mu t$.
 
 Let $\Yjhmuz[0] \in C^\infty(\set T;\ \ker(A'^*+\mu))$ with $\Yjhmuz[0](x_0) \neq 0$.
 For $j\geq 1$, set $\Yjhmuz = 0$. Let $q> k+1$. Consider the function $g_h^\wkb$ defined by \cref{th-wkb} (where $B$, and $K$ are replaced respectively by $B^*$ and $K^*$, and where $A$ is replaced by $-A^*$).
 
 Set also $g_h(t,x)$ the solution of the adjoint system~\eqref{eq-adj} with initial condition $g_h^\wkb(t=0,\cdot)$.
 
 \step{Estimation of the difference between $g_h^\wkb$ and $g_h$}\label{step-error-wkb}
 According to \cref{th-wkb},
 \[
 (\partial_t - B^*\partial_x^2 - A^*\partial_x + K^*) g_h^\wkb = \bigO(h^{k+1}) \eu^{\iu\phi(t,x)/h},
 \]
 Hence, with $r_h \coloneqq g_h^\wkb - g_h$, we have $r_h(0,x) = 0$ and 
 \[
 (\partial_t - B^*\partial_x^2 - A^*\partial_x + K^*) r_h = \bigO(h^{k+1}) \eu^{\iu\phi(t,x)/h}.
 \]
 where the $\bigO$ has to be understood in  the $C^\infty$-topology.
  Since the parabolic-transport system is well-posed in $H^k(\T)^d$, we get that for every $j\in \set N$, uniformly in $0<t<T$, 
 \begin{equation}
     \label{eq-wkb-error-est}
     \|\partial_x^j(g_h^\wkb(t,\cdot) - g_h(t,\cdot))\|_{L^2} \leq C_jh^{k-j+1}.
 \end{equation}
 
 \step{Upper bound on the right-hand side of the observability inequality}
 According to the triangle inequality,
 \[
  \|\partial_x^kM^*g_h\|_{L^2((0,T)\times \omega)} \leq
  \|\partial_x^kM^*g_h^\wkb\|_{L^2((0,T)\times \omega)} +
  \|\partial_x^kM^*r_h\|_{L^2((0,T)\times \omega)}.
 \]
 According to step~\ref{step-error-wkb}, the second term of the right-hand side is $\bigO(h)$. For the first term of the right-hand side, we recall that $g_h^\wkb = \sum_{j=0}^q h^jY_j \eu^{\iu \psi(x-\mu t)}$, and that, thanks to our choice of $\psi$, $\eu^{\iu \psi(x+\mu t)}$ is exponentially small when $x+\mu t \neq x_0$. Therefore, since $x_0 - \mu t\notin \overline \omega$ for every $0 \leq t \leq T$, for some $c>0$,
 \[
  \|\partial_x^kM^*g_h^\wkb\|_{L^2((0,T)\times \omega)} = \bigO(\eu^{-c/h}).
 \]
 This proves that
 \begin{equation}\label{eq-wkb-upper-time}
  \|\partial_x^kM^*g_h\|_{L^2((0,T)\times \omega)} = \bigO(h).
 \end{equation}
 
 \step{Lower bound on the left-hand side of the observability inequality}
 According to \cref{th-wkb-nonstationary}, for any $\ell \geq 0$,
 \begin{equation}\label{eq-wkb-low-freq}
  \|\pi_N g_h^\wkb(T,\cdot)\|_{L^2(\set T)} = 
  \|g_h^\wkb(T,\cdot)\|_{L^2(\T)} + \bigO(h^\ell).
 \end{equation}
 Thus, using the inverse triangle inequality,
 \[
  \|\pi_N g_h(T,\cdot)\|_{L^2(\T)} \geq 
  \|\pi_N g_h^\wkb(T,\cdot)\|_{L^2(\T)} - \|\pi_N r_h(T,\cdot)\|_{L^2(\T)}.
 \]
 Using the error estimates of step \ref{step-error-wkb}, and \cref{eq-wkb-low-freq}, we get
 \begin{equation}\label{eq-wkb-lower-1}
  \|\pi_N g_h(T,\cdot)\|_{L^2(\T)} \geq 
  \|g_h^\wkb(T,\cdot)\|_{L^2(\T)}-Ch.
 \end{equation}
 Thus, we only need to find a lower-bound for $\|g_h^\wkb(T,\cdot)\|_{L^2(\T)}$. We have
 \[
 \|g_h^\wkb(T,\cdot)\|_{L^2(\T)}^2  =
 \int_\T \left|\sum_{j=0}^q h^j Y_j(t,x)\right|^2 \eu^{-2\varphi(x+\mu T)/h}\diff x = 
 \int_\T |Y_0(t,x)|^2 \eu^{-2\varphi(x+\mu T)/h}\diff x + \bigO(h).
 \]
 Recall that $\varphi(x_0) = 0$, that for $x\neq x_0$, $\varphi(x)$ is strictly positive and that $\varphi''(x_0) \neq 0$. Then, using Laplace's method (see \textit{e.g.}~\cite[\S2.2]{Murray84} and in particular~\cite[eq.~(2.34)]{Murray84}), we get
 \[
 \|g_h^\wkb(T,\cdot)\|_{L^2(\T)}^2 = c\sqrt h + O(h^{3/2})
 \]
 for some $c>0$. Plugging this into \cref{eq-wkb-lower-1}, we get that for $h$ small enough,
 \begin{equation}\label{eq-wkb-lower-time}
  \|\pi_N g_h(T,\cdot)\|_{L^2(\T)} \geq c\sqrt{h}.
 \end{equation}
 
 \step{Conclusion}
 Comparing the lower bound~\eqref{eq-wkb-lower-time} and the upper bound~\eqref{eq-wkb-upper-time} and taking $h$ small enough, we see that the observability inequality~\eqref{eq-obs-time} cannot hold if $T<T_*$, hence the parabolic-transport system~\eqref{Syst} with initial conditions in $H^k\cap \pi_N(L^2(\T))$ is not null-controllable in time $T<T_*$.
\end{proof}

\subsection{Rough initial conditions are not null-controllable}\label{ssec-pr}
We now give necessary conditions for every $L^2$ initial condition to be steerable to $0$. To do this, we only need the first term of the WKB expansion of \cref{th-wkb}. By analyzing higher-order terms of the WKB expansion, it is likely that we could get necessary conditions for the null-controllability of every $H^k$ initial conditions. But doing this analysis in general seems hard, and we leave this for future work, or on a case-by-case basis.
In fact, we will prove the following statement, which is a refined version of \cref{th-necessary-rough}.
\begin{prop}
 Let $\mu\in \Sp(A')$, $N\in \N$ and $T>0$. Let $P'_\mu$ be the projection on the eigenspace of $A'$ associated to $\mu$. Write $K$ in blocks as $\big(\begin{smallmatrix}K'&K_{12}\\K_{21}&K_{22}\end{smallmatrix}\big)$, with $K'\in \mathcal M_{d_h}(\mathbb R)$. Set
 \[
 K_\mu^* \coloneqq (P'_\mu)^*\left((K')^* + A_{21}^* (D^*)^{-1} A_{12}^* \right)(P'_\mu)^*
 \]
 Assume that every initial condition $f_0\in L^2(\T)^d\cap \{\sum_{|n|>N} X_n \eu^{\iu nx}\}$ is steerable to $0$ in time $T$ with control in $L^2((0,T)\times \omega)$. Then, for every $\mu \in \Sp(A')$ and for every non-zero subspace $S\subset \range((P'_\mu)^*)$ that is stable by $K_\mu^*$, there exists $V_0\in S$ such that $M^*\big(\begin{smallmatrix} V_0\\0\end{smallmatrix}\big) \neq 0$.
\end{prop}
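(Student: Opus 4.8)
The plan is to disprove the observability inequality associated to the stated controllability property, using the lowest-order WKB solution from \cref{th-wkb}. By the standard duality argument (as in \cref{ssec-st}), null-controllability of every $f_0 \in L^2(\T)^d \cap \pi_N(L^2)$ in time $T$ is equivalent to an observability inequality of the form
\[
\|\pi_N g(T,\cdot)\|_{L^2(\T)} \leq C \|M^* g\|_{L^2((0,T)\times \omega)}
\]
for all solutions $g$ of the adjoint system \eqref{eq-adj}. The contrapositive strategy is: assuming the conclusion fails, I would construct a sequence of solutions that violates this inequality. So suppose, for contradiction, that there exist $\mu \in \Sp(A')$ and a nonzero $K_\mu^*$-stable subspace $S \subset \range((P'_\mu)^*)$ such that $M^* \big(\begin{smallmatrix} V_0\\0\end{smallmatrix}\big) = 0$ for every $V_0 \in S$.

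Next I would build the WKB data inside $S$. Here the phase cannot concentrate at a point outside $\overline\omega$ (as in \cref{ssec-st}), because now I must bound $M^* g_h^\wkb$ on all of $(0,T)\times \omega$ using the \emph{algebraic} vanishing $M^*(\begin{smallmatrix} V_0 \\ 0\end{smallmatrix}) = 0$ rather than spatial localization. Thus I take the real phase $\phi(t,x) = \iu\varphi(x+\mu t) + n_0(x+\mu t)$ from \cref{rk-wkb-phase} with, say, $\varphi \equiv 0$ (a genuinely oscillatory, non-decaying phase, $h^{-1}\in\N$), and I choose the leading profile $\Yjhmuz[0] \in C^\infty(\T; \range((P'_\mu)^*))$ to take values in $S$, with higher initial data $\Yjhmuz = 0$ for $j\geq 1$. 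The crucial point is that the transport equation \eqref{eq-wkb-mu} governing $\Yjhmu[0]$ — whose zeroth-order coefficient is exactly $K'_{11} + (P'_\mu)^* A_{12}^* (D^*)^{-1} A_{21}^* (P'_\mu)^*$, i.e.\ the restriction of $K_\mu^*$ to $\range((P'_\mu)^*)$ for the adjoint system — preserves the subspace $S$ along its characteristics, precisely because $S$ is $K_\mu^*$-stable. Hence $Y_0(t,\cdot) \in S \oplus \{0\}$ for all $t$ (the $\neq\mu$ and parabolic components of $Y_0$ vanish since \eqref{eq-wkb-nmu}, \eqref{eq-wkb-p} are driven only by $Y_{-1},Y_{-2}=0$ and by $\Yjp[0]$). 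Consequently $M^* Y_0 = 0$ identically.

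I would then run the same four-step comparison as in \cref{th-min-time}. The error estimate \eqref{eq-wkb-error-est} controls $g_h - g_h^\wkb$; \cref{th-wkb-nonstationary} shows $\pi_N$ changes the $L^2$ norm of $g_h^\wkb(T,\cdot)$ only by $\bigO(h^\ell)$. For the \emph{right-hand side}, since $M^* Y_0 = 0$, the leading WKB term disappears and $M^* g_h^\wkb = \bigO(h)$ in $L^2((0,T)\times\omega)$, whence $\|M^* g_h\|_{L^2} = \bigO(h)$. For the \emph{left-hand side}, because the phase is purely oscillatory with $\varphi\equiv 0$, one has $\|g_h^\wkb(T,\cdot)\|_{L^2(\T)}^2 = \int_\T |Y_0(T,x)|^2 \diff x + \bigO(h)$, which is bounded below by a positive constant provided $Y_0(T,\cdot)\not\equiv 0$. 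This gives $\|\pi_N g_h(T,\cdot)\|_{L^2} \geq c > 0$, contradicting the observability inequality as $h\to 0$.

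The main obstacle is the second paragraph: verifying cleanly that the leading profile stays in $S\oplus\{0\}$. This requires two things to line up — first, that the zeroth-order term of the transport operator in \eqref{eq-wkb-mu} really equals $K_\mu^*$ restricted to $\range((P'_\mu)^*)$ after passing to the adjoint (tracking the substitutions $A\mapsto -A^*$, $B\mapsto B^*$, $K\mapsto K^*$ and matching $P'_\mu A_{12}D^{-1}A_{21}P'_\mu$ against $A_{21}^*(D^*)^{-1}A_{12}^*$); and second, that a constant-coefficient linear transport equation $(\partial_t + \mu\partial_x + K_\mu^*)\Yjhmu[0]=0$ with initial profile valued in a $K_\mu^*$-stable $S$ keeps its solution valued in $S$ for all time (immediate from $\Yjhmu[0](t,x) = \eu^{-t K_\mu^*}\Yjhmuz[0](x - \mu t)$ and stability of $S$ under $\eu^{-tK_\mu^*}$). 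Once these are in place, one must also ensure $Y_0(T,\cdot)$ is not identically zero — guaranteed by choosing $\Yjhmuz[0]$ to be a nonzero constant vector in $S$, so that $\eu^{-TK_\mu^*}\Yjhmuz[0]\neq 0$.
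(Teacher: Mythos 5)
Your proposal is correct and follows essentially the same route as the paper's proof: duality to the observability inequality, a WKB solution for the adjoint system with flat phase ($\varphi\equiv 0$), a constant leading profile $V_0\in S$ with vanishing higher-order initial data, the $\bigO(h)$ error estimate, and the comparison of the two sides of the inequality; your contradiction framing (assuming $M^*$ kills all of $S$, so $M^*Y_0\equiv 0$) is simply the contrapositive of the paper's direct conclusion that $M^*\bigl(\begin{smallmatrix}\eu^{-tK_\mu^*}V_0\\0\end{smallmatrix}\bigr)$ cannot vanish for all $t\in[0,T]$. One cosmetic slip: the zeroth-order coefficient of the adjoint transport equation is $(P'_\mu)^*\bigl((K')^*+A_{21}^*(D^*)^{-1}A_{12}^*\bigr)(P'_\mu)^*$ (the ordering you state correctly in your final paragraph), not $A_{12}^*(D^*)^{-1}A_{21}^*$.
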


\begin{proof}
 \step{Observability inequality} Using a standard duality lemma~\cite[Lemma~2.48]{Coron07}, and as in the proof of \cref{th-min-time}, we get an observability inequality that is equivalent to the null-controllability of the system~\eqref{Syst} with initial conditions in $ L^2(\T)^d\cap \{\sum_{|n|>N} X_n \eu^{\iu nx}\}$. This observability inequality is: there exists $C>0$ such that for every $g_0\in L^2(\set T)^d$, the solution $g$ of
 \begin{equation}\label{eq-adj-rough}
     (\partial_t - B^*\partial_x^2 - A^*\partial_x + K^*) g(t,x) = 0, \quad g(0,x) = g_0(x)
 \end{equation}
 satisfies
 \begin{equation}\label{eq-obs-rough}
    \|\pi_N g(T,\cdot)\|_{L^2(\set T)} \leq C \|M^*g\|_{L^2((0,T)\times \omega)},
 \end{equation}
 where, as in the proof of \cref{th-min-time}, $\pi_N\colon \sum_{n\in\set Z} X_n \eu^{\iu n x} \in L^2(\set T) \mapsto \sum_{|n|> N} X_n \eu^{\iu nx}$.
 
 \step{Construction of the counterexample} Let $V_0 \in S\setminus\{0\}$. Set $\varphi \coloneqq = 0$ and let $\phi(t,x) = n_0(x-\mu t)$ as in \cref{rk-wkb-phase}. Set $\Yjhmuz[0](x) \coloneqq V_0$. For $j>0$, set $\Yjhmuz \coloneqq 0$. Let $g_h^\wkb$ be defined by \cref{th-wkb} with $B$ and $K$ replaced respectively by $B^*$ and $K^*$ and $A$ by $-A^*$, and with $q\geq 2$. Let $g_h$ be the solution of the parabolic-transport system~\eqref{Syst} with initial condition $g_h^\wkb(0,\cdot)$.
 
 Remark that according to \cref{th-wkb}, and in particular \cref{eq-wkb-mu},
 \[
 (\partial_t - \mu\partial_x + K_\mu^*)\Yjhmu[0] = 0. 
 \]
 Thus, $\Yjhmu[0](t,x) = \eu^{-tK_\mu^*} V_0$. In particular, since $S$ is stable by $K_\mu^*$, $\Yjhmu[0](t,x)\in S$ for all $t,x$.
 
 \step{Error estimate between $g_h^\wkb$ and $g_h$}\label{step-error-rough} Set $r_h \coloneqq g_h - g_h^\wkb$. Then $r_h(0,x) = 0$, and according to \cref{th-wkb},
 \[
 (\partial_t - B^* \partial_x^2 -A^* \partial_x + K^*) r_h = \bigO(h) \eu^{i\phi(t,x)/h}.
 \]
 Since the parabolic-transport system is well-posed in $L^2(\T)^d$, uniformly in $0\leq t\leq T$,
 \[
 \|r_h(t,\cdot)\|_{L^2(\T)} \leq Ch.
 \]
 
 \step{Upper bound of the right-hand side of the observability inequality}
 Using the error estimate of the previous step, the right-hand side of the observability inequality~\eqref{eq-obs-rough} satisfies
 \begin{align}
     \|M^*g_h\|_{L^2((0,T)\times \omega)}^2 &\leq 
     \|M^* g_h^\wkb\|_{L^2((0,T)\times \omega)}^2 + Ch\notag\\
     &\leq \|M^*\Yjh[0]\eu^{\iu \phi/h}\|_{L^2((0,T)\times \omega)}^2 + Ch\notag\\
     &= \left\|M^*\begin{pmatrix}\Yjhmu[0]\\0\end{pmatrix}\right\|_{L^2((0,T)\times \omega)}^2 {{}+Ch}\notag\\
     &= 2\pi \int_0^T \left|
       M^*\begin{pmatrix}\eu^{-tK_\mu^*}V_0\\0\end{pmatrix}
      \right|^2\diff t  + Ch,\label{eq-obs-rhs-rough}
 \end{align}
 where we used the definition of $g_h^\wkb$ for the last three inequalities.
 
 \step{Lower-bound of the left-hand side of the observability inequality}
 Using again the error estimate of step~\ref{step-error-rough}, the left-hand side of the observability inequality~\eqref{eq-obs-rough} satisfies
 \begin{align}
     \|\pi_N g_h(T,\cdot)\|_{L^2}^2 
     &\geq \|\pi_N g_h^\wkb(T,\cdot)\|_{L^2}^2 - Ch.\notag
     \intertext{Then, using the estimate on low frequencies of $g_h^\wkb$ (\cref{th-wkb-nonstationary})}
     \|\pi_N g_h(T,\cdot)\|_{L^2}^2 
     &\geq \|g_h^\wkb(T,\cdot)\|_{L^2}^2 - Ch.\notag
     \intertext{Now, using the definition of $g_h^\wkb$, and the fact that $|\eu^{\iu\phi}|=1$,}
     \|\pi_N g_h(T,\cdot)\|_{L^2}^2 
     &\geq \|\Yjhmu[0](T,\cdot)\|_{L^2}^2 - Ch.\notag\\
     &= 2\pi |\eu^{-TK_\mu^*}V_0|^2 -Ch.\label{eq-obs-lhs-rough}
 \end{align}

\step{Conclusion} Comparing the upper bound on the right-hand side of the observability inequality (\cref{eq-obs-rhs-rough}) and the lower bound on the left-hand side (\cref{eq-obs-lhs-rough}), we see that $M^*\eu^{-tK_\mu^*}V_0$ cannot vanish for every $0\leq t\leq T$. Since $\eu^{-tK_\mu^*}V_0\in S$ for every $t$, this proves the proposition.
\end{proof}

\section{Systems of two equations}\label{sec-2t2}
We apply the general theorems of the previous sections on $2\times 2$ systems. Some of these results are not new (see, \textit{e.g.},~\cite{CMRR14}). Our goal here is only to check whether our results are optimal, at least in this setting. 

\subsection{Control properties of \texorpdfstring{$2\times 2$}{2×2} systems: statements}
Here, we consider the parabolic transport-system~\eqref{Syst} with
\begin{equation}\label{eq-matrices-2d}
    B = \begin{pmatrix}
    0&0\\0&d
    \end{pmatrix}, \quad 
    A = \begin{pmatrix}
    a'&a_{12}\\a_{21}&a_{22}
    \end{pmatrix}, \quad
    K = \begin{pmatrix}
    k_{11}&k_{12}\\k_{21}&k_{22}
    \end{pmatrix},\quad 
    M = \begin{pmatrix}
    m_1\\m_2
    \end{pmatrix}.
\end{equation}
where all lower-case letters are real numbers, with $d>0$ and $a' \neq 0$. Here, we assume that $M$ has rank one. We do not need to treat the case where $\rank(M) = 2$, because it is already covered with the general theorem where there is a control on every component (see \cite[Theorem~2]{BKL20} or \cref{th-main-k0} with $k=1$): every initial condition in $L^2(\T)^d$ is null-controllable in time $T>T_*$. In the following three propositions, we detail the applications of our general theorem to eleven cases, showcasing the variety of phenomena that can appear depending on the values of every coefficients. The proofs are given in the next subsections.

\begin{prop}\label{th-2x2-h}
 Assume that $B,A,K,M$ are given by \cref{eq-matrices-2d}. Assume that $(m_1,m_2) = (1,0)$. If $(a_{21}, k_{21}) = (0,0)$, the parabolic-transport system~\eqref{Syst} is not null-controllable, whatever the time $T$ is.
 
 Let $T>\ell(\omega)/|a'|$ (where $\ell(\omega)$ is defined in \cref{eq-def-l}). 
 \begin{itemize}
     \item If $k_{21} \neq 0$, every initial condition in $L^2(\T)^2$ for the system~\eqref{Syst} can be steered to $0$ in time $T$ with $L^2$ controls.
     \item If $a_{21} \neq 0$ and $k_{21} = 0$, every initial condition $f_0 = (f_0^\h, f_0^\p)$ in $L^2(\T)^2$ such that $\int_\T f_0^\p = 0$ for the system~\eqref{Syst} can be steered to $0$ in time $T$ with $L^2$ controls.
 \end{itemize}
\end{prop}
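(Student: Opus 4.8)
The plan is to push everything through the Fourier decomposition and then invoke the refined controllability statement \cref{th-main-p} together with the necessity of the spectral Kalman rank condition (\cref{rk-main}). Everything is governed by a single short computation. With the matrices \eqref{eq-matrices-2d} and $M = \big(\begin{smallmatrix}1\\0\end{smallmatrix}\big)$, writing $\alpha_n \coloneqq -\iu n a' - k_{11}$ and $\beta_n \coloneqq -\iu n a_{21} - k_{21}$, one has from \eqref{eq-def-Bn}--\eqref{eq-def-kalman}
\[
B_n M = \begin{pmatrix} \alpha_n \\ \beta_n \end{pmatrix}, \qquad \kal{B_n}{M} = \kal[2]{B_n}{M} = \begin{pmatrix} 1 & \alpha_n \\ 0 & \beta_n \end{pmatrix},
\]
the second equality holding because $d = d_\h + d_\p$ with $d_\h = d_\p = 1$. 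Thus $\rank(\kal{B_n}{M}) = 2$ if and only if $\beta_n \neq 0$; and since $a_{21}, k_{21} \in \R$ and $n \in \Z$, the purely imaginary number $-\iu n a_{21}$ equals the real number $k_{21}$ only when $n a_{21} = 0$ and $k_{21} = 0$. This dichotomy drives all three assertions.

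For the non-controllability statement I would argue as follows. If $(a_{21},k_{21}) = (0,0)$, then $\beta_n = 0$ for every $n$, so $\rank(\kal{B_n}{M}) = 1 < 2$ for all $n \in \Z$. By the first item of \cref{rk-main}, the condition $\forall n,\ \rank(\kal{B_n}{M}) = d$ is necessary for the null-controllability of every $H^k$ initial condition in any time $T$; since it fails, already a single nonzero Fourier mode $X_n e_n$ cannot be steered to $0$, so the system is not null-controllable whatever $T$ is.

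For the two positive results I would first check that hypotheses \eqref{dd1d2}--\eqref{h:A1} hold: $D = d > 0$, and $A' = a'$ is a nonzero real scalar, so $\Sp(A') = \{a'\}$, $\mu_* = |a'| > 0$ and $T_* = \ell(\omega)/|a'|$, matching the hypothesis $T > \ell(\omega)/|a'|$. Then I apply \cref{th-main-p} with $k = 2$ (valid since $\rank(\kal[2]{B_n}{M}) = 2$ for all large $|n|$). The crux is to compute the loss of regularity $p$: whenever $\beta_n \neq 0$ the matrix $\kal[2]{B_n}{M}$ is square and invertible, so its pseudo-inverse is its ordinary inverse,
\[
\kal[2]{B_n}{M}^+ = \begin{pmatrix} 1 & -\alpha_n/\beta_n \\ 0 & 1/\beta_n \end{pmatrix}.
\]
Reading off the hyperbolic blocks gives $L_{n,1}^\h = 1$ and $L_{n,2}^\h = 0$, whence $p = \max\big(0 + \deg 1,\ 1 + \deg 0\big) = 0$, so \cref{th-main-p} delivers controllability of every datum in $H^0(\T)^2 \cap E = L^2(\T)^2 \cap E$. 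It remains to identify $E$. If $k_{21} \neq 0$, then $\beta_n \neq 0$ for all $n$, hence $\range(\kal{B_n}{M}) = \C^2$ for every $n$ and $E = L^2(\T)^2$, giving $L^2$-controllability. If instead $a_{21} \neq 0$ and $k_{21} = 0$, then $\beta_n \neq 0$ exactly for $n \neq 0$, while $\range(\kal{B_0}{M}) = \C \times \{0\}$; so the only constraint defining $E$ is $c_0(f_0) \in \C \times \{0\}$, i.e. $\int_\T f_0^\p = 0$, which is precisely the stated condition.

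The main obstacle is not the algebra but extracting the \emph{sharp} space $L^2$ rather than the a priori bound $H^{4d(k_0-1)} = H^8$ that \cref{th-main-k0} would give (here $k_0 = 2$). This is exactly why one must go back to \cref{th-main-p} and exploit the vanishing $L_{n,2}^\h = 0$: the second block-row of the pseudo-inverse feeds only the smooth parabolic fictitious control, so no derivative is lost on the hyperbolic component and $p = 0$. I would also be careful that the rank drop at $n = 0$ in the last case is not a contradiction but is precisely absorbed into the definition of $E$, so that the necessity at $n=0$ and the positive conclusion are consistent.
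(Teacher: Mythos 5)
Your proposal is correct and follows essentially the same route as the paper's own proof: the same computation of $\kal{B_n}{M}$ (up to an immaterial overall sign; the paper's displayed matrix even has a harmless typo writing $k_{22}$ for $k_{11}$), the same use of the necessity of the spectral Kalman condition for the non-controllability case, and the same application of \cref{th-main-p} with $k=2$, reading off $L_{n,1}^\h=1$, $L_{n,2}^\h=0$, hence $p=0$, and identifying $E$ via the rank drop at $n=0$. No gaps; your extra verification of hypotheses \eqref{dd1d2}--\eqref{h:A1} and your remark on why \cref{th-main-p} rather than \cref{th-main-k0} yields the sharp $L^2$ space are exactly the points the paper relies on implicitly.
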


\begin{prop}\label{th-2x2-p}
 Assume that $B,A,K,M$ are given by \cref{eq-matrices-2d}. Assume that $(m_1,m_2) = (0,1)$. If $(a_{12}, k_{12}) = (0,0)$, the parabolic-transport system~\eqref{Syst} is not null-controllable, whatever the time $T$ is.
 
 Let $T>\ell(\omega)/|a'|$.
 \begin{itemize}
     \item If $a_{12}\neq 0$ and $k_{12} \neq 0$, every initial condition in $H^1(\T)\times L^2(\T)$ for the system~\eqref{Syst} can be steered to $0$ in time $T$ with $L^2$ controls.
     \item If $a_{12}\neq 0$ and $k_{12} = 0$, every initial condition $f_0 = (f_0^\h,f_0^\p)$ in $H^1(\T)\times L^2(\T)$ such that $\int_\T f_0^\h = 0$ for the system~\eqref{Syst} can be steered to $0$ in time $T$ with $L^2$ controls.
     \item If $a_{12}= 0$ and $k_{12} \neq 0$, every initial condition in $H^2(\T)\times L^2(\T)$ for the system~\eqref{Syst} can be steered to $0$ in time $T$ with $L^2$ controls.
 \end{itemize}
 In every cases, there exists an initial condition $f_0$ in $L^2(\T)$ such that $\int_\T f_0 = 0$ that cannot be steered to $0$ in time $T$ with $L^2$ controls.
\end{prop}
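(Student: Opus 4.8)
The plan is to derive every assertion by specializing the general results of the preceding sections; the only genuine work is computing the Kalman matrices $\kal{B_n}{M}$ and the regularity exponent $p$ of \cref{th-main-p}. With $M = \big(\begin{smallmatrix}0\\1\end{smallmatrix}\big)$, and writing $\alpha_n \coloneqq -\iu n a_{12} - k_{12}$ and $\beta_n \coloneqq -dn^2 - \iu n a_{22} - k_{22}$, I would first record that
\[
B_n = \begin{pmatrix} -\iu n a' - k_{11} & \alpha_n \\ -\iu n a_{21} - k_{21} & \beta_n \end{pmatrix}, \qquad \kal{B_n}{M} = \begin{pmatrix} 0 & \alpha_n \\ 1 & \beta_n \end{pmatrix},
\]
so that $\det \kal{B_n}{M} = -\alpha_n$ and the rank condition $\rank(\kal{B_n}{M}) = 2$ holds exactly when $\alpha_n \neq 0$. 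This sorts the cases immediately: if $(a_{12},k_{12})=(0,0)$ then $\alpha_n \equiv 0$ and the condition fails for every $n$; if $a_{12}\neq 0,\ k_{12}\neq 0$ it holds for all $n\in\Z$ since $\Re \alpha_n = -k_{12}\neq 0$; if $a_{12}\neq 0,\ k_{12}=0$ it holds for all $n\neq 0$ but fails at $n=0$; and if $a_{12}=0,\ k_{12}\neq 0$ it holds for all $n$.

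For the positive statements I would compute the pseudo-inverse. Since $\kal{B_n}{M}$ is square and invertible whenever $\alpha_n\neq 0$, its Moore--Penrose inverse is its ordinary inverse,
\[
\kal{B_n}{M}^+ = \begin{pmatrix} -\beta_n/\alpha_n & 1 \\ 1/\alpha_n & 0 \end{pmatrix},
\]
so that, in the block notation of \cref{th-main-p}, $L_{n,1}^\h = -\beta_n/\alpha_n$ and $L_{n,2}^\h = 1/\alpha_n$, whence $p = \max(\deg L_{n,1}^\h,\, 1 + \deg L_{n,2}^\h)$. When $a_{12}\neq 0$ one has $\deg\alpha_n = 1$, so $\deg L_{n,1}^\h = 1$ and $\deg L_{n,2}^\h = -1$, giving $p=1$; when $a_{12}=0$ (hence $k_{12}\neq 0$) one has $\deg\alpha_n=0$, so $\deg L_{n,1}^\h = 2$, giving $p=2$. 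Applying \cref{th-main-p} yields controllability on $H^p(\T)^2 \cap E$; to reach the asserted spaces $H^1(\T)\times L^2(\T)$ and $H^2(\T)\times L^2(\T)$ I would invoke the parabolic-smoothing refinement of \cref{rk-main}, which lowers the required regularity of the parabolic block by one derivative in general and by two when $A_{12}=a_{12}=0$. Finally $E$ constrains only the frequencies where the rank drops: it is all of $L^2(\T)^2$ except in the case $a_{12}\neq 0,\ k_{12}=0$, where $\range(\kal{B_0}{M}) = \Span\{\Tr{(0\ 1)}\}$ forces $c_0(f_0^\h)=0$, i.e. $\int_\T f_0^\h = 0$, exactly the stated constraint.

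The two negative assertions I would treat separately. When $(a_{12},k_{12})=(0,0)$ the rank condition fails for every $n$, so non-null-controllability for any $T$ (even for $H^k$ data) is immediate from the necessity of the spectral Kalman rank condition recalled in \cref{rk-main}; equivalently, the hyperbolic component then solves the autonomous equation $\partial_t f^\h + a'\partial_x f^\h + k_{11} f^\h = 0$, whose nonzero solutions never reach $0$. For the last assertion, I would apply \cref{th-necessary-rough} with $\mu = a'$ and $N=0$, noting that $\{\sum_{|n|>0} X_n \eu^{\iu nx}\}$ is precisely $\{f_0\in L^2(\T)^2 : \int_\T f_0 = 0\}$. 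Since $M = \big(\begin{smallmatrix}0\\1\end{smallmatrix}\big)$, we have $M^*\big(\begin{smallmatrix} V_0 \\ 0\end{smallmatrix}\big) = 0$ for every $V_0 \in \C^{d_\h}$, so the conclusion of \cref{th-necessary-rough} cannot hold; hence its hypothesis fails, producing an $f_0 \in L^2(\T)^2$ with $\int_\T f_0 = 0$ that is not steerable to $0$. The main obstacle is not any single computation but the careful bookkeeping of regularity: matching the $H^p\times H^p$ output of \cref{th-main-p} with the sharper $H^p\times L^2$ claims requires correctly invoking the free-evolution smoothing of the parabolic block and checking its compatibility with the finite-frequency constraint encoded in $E$.
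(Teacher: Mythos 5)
Your proposal is correct and follows essentially the same route as the paper's proof: the identical Kalman-matrix and degree computations (rank drops exactly where $\iu n a_{12}+k_{12}=0$, $p=1$ when $a_{12}\neq 0$, $p=2$ when $a_{12}=0$ and $k_{12}\neq 0$), the same identification of $E$ via $\range(\kal{B_0}{M})$, the same free-evolution smoothing of the parabolic block (\cref{th-regularity}) to pass from $H^p(\T)^2$ to $H^p(\T)\times L^2(\T)$, and the same application of \cref{th-necessary-rough} with $N=0$ and $M^*\big(\begin{smallmatrix}V_0\\0\end{smallmatrix}\big)=0$ for the final negative claim. The one point you flag but leave unchecked---compatibility of the smoothing step with the constraint $\int_\T f_0^\h=0$ when $k_{12}=0$---is settled by the one-line observation that $\tfrac{\mathrm{d}}{\mathrm{d}t}c_0(f^\h)=-k_{11}c_0(f^\h)$ under free evolution, so the zero-average condition on the hyperbolic component is preserved, exactly as the paper verifies.
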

In the case where $a_{21} = 0$ and $k_{21} \neq 0$, there is a gap in the regularity condition that is sufficient for the null controllability (\textit{i.e.}, $H^2\times L^2$), and the lack of null-controllability of $L^2\times L^2$ initial conditions. Are every $H^1 \times L^2$ initial conditions steerable to $0$? We conjecture that this is not the case, but \cref{th-necessary-rough} is not enough to prove so. We would need to look at the second term in the WKB expansion to find out, or use another method; maybe using a refined version of regularization properties of \cref{th-regularity}. 

We do not detail in general the case where $m_1\neq 0$ and $m_2 \neq 0$. Let us just mention that there is no regularity condition for null-controllability to hold. But depending on whether the solution of $\det([B_n,M]) = 0$ (which is a quadratic equation in $n$) are integer, there might be a condition on at most two fourier components for an initial condition to be steerable to $0$. We only detail the following case that is about the simultaneous control of a transport and a parabolic equation.

\begin{prop}\label{th-2x2-sim}
 Assume that $B,M$ are given by \cref{eq-matrices-2d}. Assume that $A = \big(\begin{smallmatrix} a'&0\\0&a_{22}\end{smallmatrix}\big)$ and $K = \big(\begin{smallmatrix} k_{11}&0\\0&k_{22}\end{smallmatrix}\big)$. Assume that $(m_1,m_2) = (1,1)$. Let $T>\ell(\omega)/|a'|$.
 \begin{itemize}
 \item If $a' \neq a_{22}$ and $k_{11} = k_{22}$, every initial condition $f_0 = (f_0^\h,f_0^\p)\in L^2(\T)^2$ such that $\int_T f_0^\h = \int_\T f_0^\p$ can be steered to zero with controls in $L^2$.
 \item If $a' \neq a_{22}$ and $k_{11} \neq k_{22}$, every initial condition in $L^2(\T)^2$ can be steered to zero with controls in $L^2$.
 \item If $a' = a_{22}$ and $\sqrt{(k_{22} - k_{11})/d} \notin \N$, every initial condition in $L^2(\T)^2$ can be steered to zero with controls in $L^2$.
 \item If $a' = a_{22}$ and $n_0 \coloneqq \sqrt{(k_{22} - k_{11})/d} \in \N$, every initial condition $f_0 = (f_0^\h,f_0^\p)\in L^2(\T)^2$ such that $c_{\pm n_0}(f_0^\h) = c_{\pm n_0}(f_0^\p)$ can be steered to zero with controls in $L^2$.
 \end{itemize}
\end{prop}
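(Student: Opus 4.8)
The plan rests on the observation that, under the hypotheses of \cref{th-2x2-sim}, both $A$ and $K$ are diagonal, so that the Fourier symbol $B_n=-n^2B-\iu nA-K$ of \eqref{eq-def-Bn} is \emph{diagonal} for every $n\in\Z$:
\[
B_n=\big(\begin{smallmatrix}\lambda_n^\h&0\\0&\lambda_n^\p\end{smallmatrix}\big),\qquad \lambda_n^\h=-\iu n a'-k_{11},\quad \lambda_n^\p=-dn^2-\iu na_{22}-k_{22}.
\]
Since $M=\Tr{(1\ 1)}$, the Kalman matrix is $\kal{B_n}{M}=\big(\begin{smallmatrix}1&\lambda_n^\h\\1&\lambda_n^\p\end{smallmatrix}\big)$, with determinant $\lambda_n^\p-\lambda_n^\h$. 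Thus the spectral Kalman rank condition holds at frequency $n$ if and only if $\lambda_n^\h\neq\lambda_n^\p$, and whenever it fails one has $\range(\kal{B_n}{M})=\Span\{\Tr{(1\ 1)}\}$. The whole argument reduces to tracking the two scalars $\lambda_n^\h,\lambda_n^\p$.

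First I would locate the resonant frequencies $\{n\in\Z:\lambda_n^\h=\lambda_n^\p\}$. Comparing imaginary parts forces $n(a_{22}-a')=0$, and comparing real parts forces $dn^2=k_{11}-k_{22}$. Matching these against the data distinguishes exactly the four listed cases: a single resonance at $n=0$ when $a'\neq a_{22}$ and $k_{11}=k_{22}$; no integer resonance when $a'\neq a_{22},\ k_{11}\neq k_{22}$, or when $a'=a_{22}$ while $dn^2=k_{11}-k_{22}$ has no integer root; and the two resonances $n=\pm n_0$ when $a'=a_{22}$ and that equation does have a (nonzero) integer root $n_0$. In all cases the resonant set is finite, so the Kalman rank condition holds for all large $|n|$, and \cref{th-main-p} applies with $k=k_0=2$ (here $k_0=2$ since $\rank(M)=1<d=2$ and $k_0\le d=2$).

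The only genuinely computational step, which I expect to be the main (though mild) obstacle, is to verify that the regularity exponent $p$ of \cref{th-main-p} equals $0$, so that $L^2$ data suffice (the crude bound $4d(k_0-1)=8$ from \cref{th-main-k0} would be far too lossy). Because $\kal[2]{B_n}{M}$ is the $2\times 2$ matrix above, its pseudo-inverse is simply its inverse,
\[
\kal[2]{B_n}{M}^{+}=\frac{1}{\lambda_n^\p-\lambda_n^\h}\big(\begin{smallmatrix}\lambda_n^\p&-\lambda_n^\h\\-1&1\end{smallmatrix}\big),
\]
so the hyperbolic blocks are $L_{n,1}^\h=\lambda_n^\p/(\lambda_n^\p-\lambda_n^\h)$ and $L_{n,2}^\h=-1/(\lambda_n^\p-\lambda_n^\h)$. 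As rational functions of $n$ these have degrees $0$ and $-2$ respectively, since $\lambda_n^\p-\lambda_n^\h$ has degree exactly $2$ (its leading coefficient being $-d\neq0$) in all four cases. Hence $p=\max\big(\deg L_{n,1}^\h,\ 1+\deg L_{n,2}^\h\big)=\max(0,-1)=0$, and \cref{th-main-p} yields null-controllability with $L^2$ controls of every $f_0\in L^2(\T)^2\cap E$.

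It then remains to read off the space $E$. At a non-resonant frequency there is no constraint, while at a resonant frequency $n$ the condition $c_n(f_0)\in\range(\kal{B_n}{M})=\Span\{\Tr{(1\ 1)}\}$ reads $c_n(f_0^\h)=c_n(f_0^\p)$. This gives $\int_\T f_0^\h=\int_\T f_0^\p$ in the first case (resonance at $n=0$), no condition in the two fully controllable cases, and $c_{\pm n_0}(f_0^\h)=c_{\pm n_0}(f_0^\p)$ in the last, matching the statement. Finally these conditions are sharp: by the necessity of the spectral Kalman rank condition recalled in \cref{rk-main}, any $f_0$ with $c_n(f_0)\notin\range(\kal{B_n}{M})$ for some $n$ has a Fourier mode that cannot be driven to $0$, so the membership $f_0\in E$ cannot be relaxed.
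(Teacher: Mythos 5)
Your proposal follows exactly the same route as the paper's own proof: compute the $2\times 2$ Fourier--Kalman matrices $\kal{B_n}{M}$, locate the finitely many resonant frequencies through $\det(\kal{B_n}{M})$, check that the regularity exponent of \cref{th-main-p} is $p=\max(0,1-2)=0$ so that $L^2$ data are admissible, and identify $E$ via $\range(\kal{B_n}{M})=\C M$ at the resonances. The $p=0$ computation and the reading of $E$ are carried out correctly and match the paper step for step.

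There is, however, one point that you asserted rather than verified, and it is exactly where the paper is internally inconsistent. With the definition \eqref{eq-def-Bn}, $B_n=-n^2B-\iu nA-K$ (which is the correct Fourier generator, cf.~\eqref{eq-fourier}), your eigenvalues $\lambda_n^\h=-\iu na'-k_{11}$ and $\lambda_n^\p=-dn^2-\iu na_{22}-k_{22}$ are right, and equating them forces $n(a_{22}-a')=0$ together with $dn^2=k_{11}-k_{22}$ --- note the sign. The statement you are proving, and the paper's proof (which computes $\kal{B_n}{M}$ with entries $\iu na'+k_{11}$ and $-dn^2+\iu na_{22}+k_{22}$, \textit{i.e.} with the sign of the $\iu nA+K$ part flipped but not that of $-n^2B$), place the resonances at $dn^2=k_{22}-k_{11}$ instead. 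These conditions are genuinely different: take $d=1$, $a'=a_{22}$, $k_{11}=1$, $k_{22}=0$; then $\sqrt{(k_{22}-k_{11})/d}\notin\N$, so the third bullet of the statement claims that every $L^2$ initial datum can be steered to zero, whereas your computation gives $\lambda_{\pm 1}^\h=\lambda_{\pm 1}^\p$, so the Kalman rank condition fails at $n=\pm 1$ and, by the necessity recalled in \cref{rk-main}, controllability of all (even smooth) data is impossible. Hence your claim that your resonance analysis ``distinguishes exactly the four listed cases'' is not correct as written: what your argument actually proves is the proposition with $\sqrt{(k_{11}-k_{22})/d}$ in place of $\sqrt{(k_{22}-k_{11})/d}$ in the last two bullets, which is the version consistent with \eqref{eq-def-Bn} and \eqref{eq-fourier}; the sign typo lies in the paper's Section on $2\times2$ systems. (For \cref{th-2x2-h,th-2x2-p} the same slip is harmless, because there the zero set of the determinant is invariant under it; it surfaces only in the simultaneous-control case.) A complete write-up must detect and resolve this sign conflict rather than assert agreement with the stated cases.
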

The case $a' \neq a_{22}$ and $k_{11} = k_{22}$ is not new, at least in spirit: the simultaneous controllability (equivalently, additive observability) of a heat equation and a wave equation has been studied by Zuazua~\cite[\S2.1--2.2]{Zuazua16}.

\subsection{Regularity of the free equation}
We will use some basic regularity results.
\begin{lemma}\label{th-regularity}
Let $f_0 \in H^1(\T)^{d_\h} \times L^2(\T)^{d_\p}$. For every $t>0$, $\eu^{-t\mathcal L} f_0 \in H^1(\T)^d$.

Assume in addition that $A_{12} = 0$, and that $f_0 \in H^2(\T)^{d_\h} \times L^2(\T)^{d_\p}$. For every $t>0$, $\eu^{-t\mathcal L} f_0 \in H^2(\T)^d$.
\end{lemma}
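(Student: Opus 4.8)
The plan is to diagonalise $\eu^{-t\mathcal L}$ over Fourier modes and to decompose the initial datum along the splitting $L^2(\T)^d = F^0\oplus F^\p\oplus F^\h$. Since $\mathcal L$ has constant coefficients, $\mathcal L(Xe_n) = -B_nXe_n$ for $X\in\C^d$, so that $c_n(\eu^{-t\mathcal L}f_0) = \eu^{tB_n}c_n(f_0)$, with $B_n = -n^2E(\iu/n)$. I would write $f_0 = \Pi^0f_0 + \Pi^\p f_0 + \Pi^\h f_0$ and treat the three pieces separately. The key soft fact is that $\partial_x$ commutes with $\mathcal L$ and preserves each of $F^0$, $F^\p$, $F^\h$ (it acts as $\iu n$ on the $n$-th mode); hence each restricted semigroup commutes with $\partial_x$, and the $H^k$-operator norm of $\eu^{-t\mathcal L}|_{F^\h}$ is bounded by its $L^2$-operator norm. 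In particular $\eu^{-t\mathcal L^\h}$ preserves every $H^k(\T)^d\cap F^\h$.

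The contributions of $F^0$ and $F^\p$ are harmless for both assertions. The space $F^0$ is finite-dimensional and consists of trigonometric polynomials, so $\eu^{-t\mathcal L}\Pi^0f_0$ is smooth. On $F^\p$, the eigenvalues of $B_n$ lying in $\range(P^\p(\iu/n))$ are asymptotic to $-n^2\Sp(D)$, so by \eqref{h:D} their real parts are $\leq -cn^2$ for $|n|$ large; standard matrix-exponential estimates (the polynomial factors coming from a possibly non-semisimple $D$ being absorbed by the Gaussian decay, uniformly in $n$ after rescaling $n^{-2}B_n|_{F^\p}\to -D$) then give $\|\eu^{tB_n}|_{\range P^\p(\iu/n)}\| \leq C_{k,t}\,n^{-2k}$ for every $k$ and fixed $t>0$. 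Summing $n^{2k}|\eu^{tB_n}c_n(\Pi^\p f_0)|^2$ shows $\eu^{-t\mathcal L^\p}\Pi^\p f_0\in H^k(\T)^d$ for all $k$; in particular it lies in $H^2$.

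It remains to show that $\Pi^\h f_0 = \sum_{|n|>n_0}P^\h(\iu/n)c_n(f_0)\,e_n$ belongs to $H^1$ (resp. $H^2$ when $A_{12}=0$), after which the commutation remark finishes the argument. This is the crux, and rests on the block asymptotics of $P^\h(\iu/n)$. I would obtain them by writing, for small $z$, the range of $P^\h(z)$ as a graph $\{(u,S(z)u)\}$ and the range of $P^\p(z)$ as a graph $\{(R(z)w,w)\}$, where $S,R$ solve the Riccati equations for the invariant subspaces of $E(z) = \big(\begin{smallmatrix}O(z)&E_{12}(z)\\E_{21}(z)&D+O(z)\end{smallmatrix}\big)$. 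Using that $D$ is invertible one finds $S(z) = -zD^{-1}A_{21}+O(z^2) = O(z)$ and $R(z) = E_{12}(z)D^{-1}+O(z\,E_{12}(z))$, and then, from $P^\h = \big(\begin{smallmatrix}(I-RS)^{-1}&-(I-RS)^{-1}R\\S(I-RS)^{-1}&-S(I-RS)^{-1}R\end{smallmatrix}\big)$, that its four blocks are $I+O(z^2)$, $O(R)$, $O(z)$, $O(z^2)$. In general $E_{12}(z) = zA_{12}+O(z^2) = O(z)$, so the parabolic-to-hyperbolic block is $O(z)$; evaluating at $z=\iu/n$ and weighting by $n^2$, every cross term is summable when $f_0\in H^1(\T)^{d_\h}\times L^2(\T)^{d_\p}$, giving $\Pi^\h f_0\in H^1(\T)^d$ and the first assertion.

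The second assertion is exactly where $A_{12}=0$ enters: then $E_{12}(z) = -z^2K_{12} = O(z^2)$, so $R(z)=O(z^2)$ and the parabolic-to-hyperbolic block of $P^\h(\iu/n)$ improves to $O(1/n^2)$. This is precisely what makes the $L^2$ parabolic datum contribute a summable amount to the $n^4$-weighted ($H^2$) norm, while the $H^2$ datum on the hyperbolic block is handled by the $I+O(1/n^2)$ and $O(1/n)$ blocks; one gets $\Pi^\h f_0\in H^2(\T)^d$. I expect the only delicate point to be this bookkeeping of orders in $1/n$: the single block that fails to be $O(1/n^2)$ for a general system is the parabolic-to-hyperbolic one, its order being governed by $E_{12}(z)=zA_{12}+O(z^2)$, and it drops from $O(1/n)$ to $O(1/n^2)$ exactly when $A_{12}=0$. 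This dichotomy is what separates the $H^1$-gain available for all systems from the $H^2$-gain available only when $A_{12}=0$; everything else (the parabolic smoothing and the preservation of regularity by the hyperbolic group) is routine once the spectral splitting and the commutation with $\partial_x$ are in hand.
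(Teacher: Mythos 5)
Your proof is correct, and its architecture coincides with the paper's: split $f_0$ along $L^2(\T)^d=F^0\oplus F^\p\oplus F^\h$, dispose of the $F^0$-part (smooth) and the $F^\p$-part (parabolic smoothing), reduce the crux to showing $\Pi^\h f_0\in H^1(\T)^d$ (resp.\ $H^2(\T)^d$ when $A_{12}=0$), and conclude by boundedness of $\eu^{-t\mathcal L^\h}$ on every $H^k$; the $A_{12}$-dichotomy enters at exactly the same point in both proofs, namely the order in $z$ of the parabolic-to-hyperbolic block of $P^\h(z)$. The differences are purely in how the two key ingredients are derived, and both of your variants are sound. For the block asymptotics of $P^\h(\iu/n)$, you represent the invariant subspaces as graphs and solve Riccati equations, obtaining $S(z)=-zD^{-1}A_{21}+\bigO(z^2)$ and $R(z)=E_{12}(z)D^{-1}+\bigO\bigl(zE_{12}(z)\bigr)$, and you assemble $P^\h$ from $(R,S)$; the paper instead quotes Kato's perturbation series \cite[Ch.~II, eq.~(2.14)]{Kato95}, which gives $P^\h(z)=\big(\begin{smallmatrix}I&0\\0&0\end{smallmatrix}\big)-z\big(\begin{smallmatrix}0&A_{12}D^{-1}\\D^{-1}A_{21}&0\end{smallmatrix}\big)+\bigO(z^2)$ in one stroke --- less work, while your route is self-contained and makes transparent that the cross block is governed by $E_{12}(z)=zA_{12}+\bigO(z^2)$. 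For the smoothing on $F^\p$, you bound $\eu^{tB_n}$ restricted to $\range(P^\p(\iu/n))$ via the convergence $n^{-2}B_n\to-D$ on these subspaces; the uniformity in $n$ that you flag is indeed the one delicate point, and it is settled by your rescaling argument (equivalently, by conjugating with Kato's transformation function so as to work on the fixed space $\range(P^\p(0))$). The paper's \cref{th-reg-parab} gets the same decay differently, by computing a reduced generator from the graph relation $c_n(f^\h)=G(\iu/n)c_n(f^\p)$ valid on $F^\p$ and reading the Gaussian decay off an explicit exponential. Either variant yields $\bigO(n^{-2k})$ decay for every $k$, and your $H^1$/$H^2$ weight bookkeeping on $\Pi^\h f_0$ is exactly the paper's.
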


To prove it, we will use the following (sub)lemma:
\begin{lemma}\label{th-reg-parab}
Consider $\mathcal L^\p$ and $F^\p$ as defined in \cref{sec-notations} (or~\cite[\S4.1]{BKL20}). For every $t>0$, $k\in \N$ and $f_0 \in F^\p$, $\eu^{-t\mathcal L^\p}f_0 \in H^k(\T)^d$.
\end{lemma}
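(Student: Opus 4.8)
The plan is to pass to the Fourier basis and exploit that, on the parabolic frequency space $F^\p$, the generator behaves like the heat-type semigroup generated by $D$, whose spectrum lies in $\{\Re>0\}$. This produces Gaussian-type decay $\eu^{-ctn^2}$ on the $n$-th Fourier mode, which dominates any polynomial weight $(1+n^2)^k$ and yields the claimed smoothing.

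First I would record the action of $\mathcal L$ in Fourier. Since $c_n(\mathcal L f) = (n^2 B + \iu n A + K)c_n(f) = -B_n\,c_n(f)$ and, by the very definitions of $B_n$ and $E$, $-B_n = n^2 E(\iu/n)$, the semigroup acts on the $n$-th mode by $c_n(\eu^{-t\mathcal L}f) = \eu^{-tn^2 E(\iu/n)}c_n(f)$. Because $P^\p(\iu/n)$ commutes with $E(\iu/n)$ (hence with $B_n$), the space $\range(P^\p(\iu/n))$ is invariant, and for $f_0 = \sum_{|n|>n_0} c_n e_n \in F^\p$ (with $c_n \in \range(P^\p(\iu/n))$) one has
\[
\eu^{-t\mathcal L^\p}f_0 = \sum_{|n|>n_0}\big(\eu^{-tn^2 E(\iu/n)}c_n\big)e_n .
\]

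The key spectral estimate is that there exist $c>0$ and $n_1$ such that, for $|n|>n_1$, every eigenvalue of $E(\iu/n)$ restricted to $\range(P^\p(\iu/n))$ has real part $\geq c$. Indeed $P^\p(0) = \big(\begin{smallmatrix}0&0\\0&I\end{smallmatrix}\big)$ and $E(0)=B=\big(\begin{smallmatrix}0&0\\0&D\end{smallmatrix}\big)$ acts as $D$ on $\range(P^\p(0))$; by holomorphy of $z\mapsto P^\p(z)$ and $z\mapsto E(z)$ and continuity of the spectrum, the eigenvalues of $E(\iu/n)|_{\range(P^\p(\iu/n))}$ converge to $\Sp(D)\subset\{\Re>0\}$ as $|n|\to\infty$, whence the lower bound. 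Rescaling by $n^2$, the parabolic eigenvalues of $-B_n = n^2 E(\iu/n)$ have real part $\geq cn^2$.

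From this I would derive the uniform semigroup bound $\big\|\eu^{-tn^2 E(\iu/n)}P^\p(\iu/n)\big\| \leq C\eu^{-ctn^2/2}$, valid for $|n|>n_1$ and $t\geq 0$. The cleanest route is a Dunford--Taylor contour integral: since the matrices $E(\iu/n)|_{\range(P^\p(\iu/n))}$ converge to $D$, they lie in a compact family with spectra uniformly in $\{\Re\geq c\}$ and uniformly bounded norms, so one can fix a single bounded contour $\Gamma\subset\{\Re\geq c/2\}$ encircling the parabolic eigenvalues (clustered near $\Sp(D)$) while excluding the hyperbolic ones (clustered near $0$); writing $\eu^{-tn^2 E(\iu/n)}P^\p(\iu/n)=\frac{1}{2\pi\iu}\oint_\Gamma \eu^{-tn^2 z}(z-E(\iu/n))^{-1}\,dz$ and using $|\eu^{-tn^2 z}|\leq\eu^{-ctn^2/2}$ on $\Gamma$ together with a uniform resolvent bound gives the estimate. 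Then, for fixed $t>0$,
\[
\|\eu^{-t\mathcal L^\p}f_0\|_{H^k(\T)^d}^2 \leq C\sum_{|n|>n_0}(1+n^2)^k\,\big|\eu^{-tn^2 E(\iu/n)}c_n\big|^2 \leq C'\Big(\sup_{|n|>n_1}(1+n^2)^k\eu^{-ctn^2}\Big)\|f_0\|_{L^2(\T)^d}^2<\infty,
\]
so $\eu^{-t\mathcal L^\p}f_0\in H^k(\T)^d$. The main obstacle is precisely the uniformity in $n$ of this semigroup bound: the matrices $n^2 E(\iu/n)$ are strongly non-normal, with norms growing like $n^2$, so a crude estimate would carry a prefactor polynomial in $tn^2$; this is still absorbed by the exponential for fixed $t>0$, but the contour argument, exploiting compactness of the family $\{E(\iu/n)|_{\range(P^\p(\iu/n))}\}$, extracts clean exponential decay and sidesteps the difficulty.
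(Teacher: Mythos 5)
Your proof is correct, but it follows a genuinely different route from the paper's. The paper never works with the full $d\times d$ exponential: it exploits the block structure of $P^\p(z)$ to show that on $F^\p$ the hyperbolic Fourier components are slaved to the parabolic ones, $c_n(f^\h(t)) = G(\iu/n)\,c_n(f^\p(t))$ with $G(z) = (I-p_{11}(z))^{-1}p_{12}(z)$, which collapses the evolution to a $d_\p$-dimensional system $(\partial_t+\mathcal D)f^\p=0$ with effective generator $n^2D+\iu n A_{22}+K_{22}+G(\iu/n)(\iu nA_{21}+K_{21}) = n^2D+\bigO(n)$; the decay then comes from $\Re(\Sp(D))\subset(0,+\infty)$, and the hyperbolic components inherit every $H^k$ regularity for free since $G(\iu/n)$ stays bounded. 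You instead keep the full matrix $-B_n=n^2E(\iu/n)$ and extract the bound $\|\eu^{-tn^2E(\iu/n)}P^\p(\iu/n)\|\leq C\eu^{-ctn^2/2}$ by a Dunford--Taylor integral over a fixed contour separating the parabolic eigenvalue cluster near $\Sp(D)$ from the hyperbolic one near $0$, with uniform resolvent bounds coming from $E(\iu/n)\to B$. Each approach has a merit: the paper's reduction produces the explicit effective generator $\mathcal D$ (a device reused from \cite{BKL20}), whereas your contour argument makes fully explicit the uniformity in $n$ of the semigroup bound for these non-normal matrices --- a point the paper's last step (``Since $\Re(\Sp(D))\subset(0,+\infty)$, $f^\p(t)$ is in every $H^k$'') handles only implicitly, since it too needs a uniform bound of the type $\|\eu^{-t(n^2D+\bigO(n))}\|\leq C\eu^{-ctn^2}$. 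One cosmetic point: your final display takes a supremum over $|n|>n_1$ while the sum runs over $|n|>n_0$ with possibly $n_1>n_0$; the finitely many modes $n_0<|n|\leq n_1$ must be bounded separately (trivially, each is finite), or one simply enlarges $n_0$ to $n_1$, which the paper's setup permits.
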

\begin{proof}
 Set $f(t) = \eu^{-t\mathcal L^\p} f_0$. Denote the first $d_\h$ components of $f(t)$ by $f^\h(t)$ and the last $d_\p$ components of $f(t)$ by $f^\p(t)$ (and similarly for $f_0$).
 
We will use some simple tools from \cite[\S4.4.1]{BKL20}. For the sake of readability, we redo the proof in full here.
\step{Computing $f^\h(t)$ as a function of $f^\p(t)$} Since $f(t) \in F^\p$, by definition of $F^\p$ (\cref{sec-notations}), for every $|n|>n_0$,
\[
P^\p(\iu/n) c_n(f(t)) = c_n(f(t)).
\]
Writing $P^\p(z)$ by blocks as $\big(\begin{smallmatrix} p_{11}(z)& p_{12}(z)\\p_{21}(z)&p_{22}(z)\end{smallmatrix}\big)$, and taking the first $d_\h$ components,
\[
p_{11}(\iu/n) c_n(f^\h(t)) + p_{12}(\iu/n) c_n(f^\p(t)) = c_n(f^\h(t)).
\]
Since $P^\p(0) = \big(\begin{smallmatrix} 0&0\\0&I\end{smallmatrix}\big)$, $p_{11}(0) = 0$ and for $z$ small enough, $|p_{11}(z)|<1$. Then, increasing $n_0$ if necessary, for $|n|>n_0$,
\[
c_n(f^\h(t)) = (I-p_{11}(\iu/n))^{-1} p_{12}(\iu/n) c_n(f^\p(t)).
\]
For $z\in \C$ small enough, let $G(z) = (I-p_{11}(z))^{-1} p_{12}(z)$. Then, $G$ depends holomorphically in $z$ small enough, and for $|n|>n_0$ $c_n(f^\h(t)) = G(\iu/n) c_n(f^\p(t))$.
 
\step{Conclusion} Define $\mathcal D$ the unbounded operator on $L^2(\T)^{d_\p}$ with domain $H^2(\T)^{d_\p}$ by
\[
 \mathcal D \bigg(\sum_n X_n \eu^{\iu n x}\bigg) \coloneqq 
 \sum_n \big(n^2 D -\iu n A_{22} - K_{22} -G(\iu/n)(\iu n A_{21} + K_{21}) \big) X_n \eu^{\iu nx}.
\]
Recall that
\[
 (\partial_t - D\partial_x^2 +A_{22} \partial_x + K_{22}) f^\p(t) + (A_{21}\partial_x +K_{21}) f^\h(t) = 0.
\]
 Since $c_n(f^\h(t)) = G(\iu/n) c_n(f^\p(t))$, this can be written as $(\partial_t + \mathcal D) f^\p(t) = 0$. Hence,
 \[
 f^\p(t) = \eu^{-t\mathcal D}f^\p_0 = 
 \sum_{|n|>n_0} \eu^{-t\big(n^2 D +\iu n A_{22} + K_{22} + G(\iu/n)(\iu n A_{21} + K_{21}) \big)} c_n(f_0^\p).
 \]
 Since $\Re(\Sp(D)) \subset (0,+\infty)$, $f^\p(t)$ is in every $H^k(\T)^{d_\p}$. Since the first $d_\h$ components of $f(t)$ are
 \[
 f^\h(t) = \sum_{|n|>n_0} G(\iu /n)c_n(f^\p(t))e_n,
 \]
 and since $G(\iu/n)$ is bounded as $|n| \to +\infty$, $f^\h(t)$ also belongs in every $H^k(\T)^{d_\h}$.
\end{proof}

\begin{proof}[Proof of \cref{th-regularity}] The proof consists in looking at the projection on hyperbolic (respectively parabolic) components of $\eu^{-t\mathcal L} f_0$, using the asymptotics for the hyperbolic projection. As in the previous proof, we denote the first $d_\h$ components of $f_0$ by $f^\h_0$ and the last $d_\p$ components by $f_0^\p$.

Let us also recall that according to~\cite[\S4.1]{BKL20},
\begin{equation}
\eu^{-t \mathcal L} f_0 = \eu^{-t \mathcal L^0}\Pi^0 f_0 + \eu^{-t \mathcal L^\h}\Pi^\h f_0 + \eu^{-t \mathcal L^\p}\Pi^0 f_\p.\label{eq-sum-semigroup}
\end{equation}

\step{Asymptotics for the hyperbolic projection}
 We use the notations $P^\p(z)$, $P^\h(z)$ defined in \cite[Proposition 5--6]{BKL20}. Using the series for the perturbation of the total eigenprojections~\cite[Ch.~II,~eq.~(2.14)]{Kato95}, we get
 \begin{align}
     P^\h(z) 
     &= \begin{pmatrix}
       I&0\\0&0
      \end{pmatrix} 
      -z \left(
      \begin{pmatrix}
       I&0\\0&0
      \end{pmatrix} 
      A
      \begin{pmatrix}
       0&0\\0&D^{-1}
      \end{pmatrix} +
      \begin{pmatrix}
       0&0\\0&D^{-1}
      \end{pmatrix}
      A
      \begin{pmatrix}
       I&0\\0&0
      \end{pmatrix}  \right) + \bigO(z^2)\notag\\
     &= \begin{pmatrix}
       I&0\\0&0
      \end{pmatrix} 
      -z 
      \begin{pmatrix}
       0&A_{12}D^{-1}\\D^{-1}A_{21}&0
      \end{pmatrix} + \bigO(z^2).\notag
 \end{align}
 Thus,
 \begin{equation}
      \Pi^\h f_0 
     = \sum_{|n|>n_0} \left[ \begin{pmatrix}
      c_n(f_0^\h) \\0
     \end{pmatrix} - \frac\iu n \begin{pmatrix}
      A_{12} D^{-1}c_n(f_0^\p) \\ D^{-1} A_{21} c_n(f_0^\h)
     \end{pmatrix} + \bigO(n^{-2} c_n(f_0)) \right]\eu^{\iu n x}.\label{eq-asym-h}
 \end{equation}
    
\step{Case where $f_0 \in H^1\times L^2$} 
Since $\Pi^0 f_0$ is a finite sum of $\eu^{\iu n x}$, it is in every $H^k$, and so is $\eu^{-t\mathcal L^0} \Pi^0 f_0$. According to the regularity of the parabolic frequencies (\cref{th-reg-parab}), $\eu^{-t \mathcal L^\p}\Pi^\p f_0$ is in every $H^k$.

Since $f_0^\h \in H^1(\T)^{d_\h}$, $(c_n(f_0^\h))_n \in \ell^2(\Z;\ 1+n^2)$ (the $\ell^2$ space with weight $1+n^2$). Since $f_0^\p\in L^2(\T)^{d_\p}$, $(c_n(f_0^\p))_n \in \ell^2(\Z)$. Hence, 
\[
\left(c_n(f_0^\h) - \frac\iu n A_{12} D^{-1}c_n(f_0^\p) \right)_{|n|>n_0} \in \ell^2({|n|>n_0};\ 1+n^2),
\]
and
\[
\left(D^{-1}A_{21} c_n(f_0^\h)\right)_{|n|>n_0} \in \ell^2({|n|>n_0};\ 1+n^2).
\]
Hence, according to the asymptotics for $\Pi^\h$ of \cref{eq-asym-h}, $\Pi^\h f_0 \in H^1(\T)^d$. Since $\eu^{-t\mathcal L^\h}$ is continuous on every $H^k$, $\eu^{-t\mathcal L^\h}\Pi^\h f_0 \in H^1$.

\step{Case where $f_0 \in H^2\times L^2$ and $A_{12} = 0$} The asymptotics~\eqref{eq-asym-h} reads 
 \begin{equation}
      \Pi^\h f_0 
     = \sum_{|n|>n_0} \left[ \begin{pmatrix}
      c_n(f_0^\h) \\0
     \end{pmatrix} - \frac\iu n \begin{pmatrix}
      0 \\ D^{-1} A_{21} c_n(f_0^\h)
     \end{pmatrix} + \bigO(n^{-2} c_n(f_0)) \right]\eu^{\iu n x}.\label{eq-asym-h-2}
 \end{equation}
 The rest of the proof is very similar to the previous case: $\eu^{-t\mathcal L^0}\Pi^0 f_0$ and $\eu^{-t\mathcal L^\p}\Pi^\p f_0$ are in every $H^k$, while the asymptotics~\eqref{eq-asym-h-2} proves that $\Pi^h f_0$ ``gains'' two derivatives compared to $f_0^\p$. 
\end{proof}

\subsection{Control properties of \texorpdfstring{$2\times 2$}{2×2} systems: proofs}\label{sec-2x2-rpoofs}

\begin{proof}[Proof of \cref{th-2x2-h}]
 In this case,
 \[
 \kal{B_n}{M} = \begin{pmatrix}
  1 & \iu n a' + k_{22} \\ 0 & \iu n a_{21} + k_{21}
 \end{pmatrix}.
 \]
 In particular, $\det(\kal{B_n}{M}) = \iu n a_{21} + k_{21}$. We see that if $(a_{21},k_{21}) = (0,0)$, the Kalman rank condition never holds, whatever $n$ is. Hence, according to \cref{rk-main}, item 1, null-controllability does not hold, whatever $T$ is.
 
 Note that in our case, $\kal{B_n}{M}^+ = \kal{B_n}{M}^{-1}$ (when the right-hand side exists). Hence,
 \[
 \kal{B_n}{M}^{-1} = \frac1{\iu n a_{21} + k_{21}}\begin{pmatrix}
  \iu n a_{21} + k_{21} & -\iu n a' - k_{22} \\ 0 & 1
 \end{pmatrix}.
 \]
 In particular, with the notations of \cref{th-main-p} with $k=2$, $L_{n,1}^\h = 1$ and $L_{n,2}^\h = 0$. Thus, $p = 0$.
 
 If $k_{21} \neq 0$, $\det(\kal{B_n}{M}) = \iu n a_{21} + k_{21}$ never vanishes. In this case, $E$ (as defined in \cref{th-main-p}) is $E = L^2(\T)^2$. Hence, according to \cref{th-main-p}, every $L^2(\T)^2$ can be steered to $0$ with $L^2$ controls in time $T > \ell(\omega)/|a'|$
 
 If $a_{21}\neq 0$ and $k_{21} = 0$, the Kalman rank condition holds for every $n \neq 0$. For $n = 0$, according to the formula for $\kal{B_n}{M}$, $\rank(\kal{B_0}{M}) = \C\times\{0\}$. Thus, $E = \{(f_0^\h,f_0^\p) \in L^2(\T)^2,\ \int_\T f_0^\p = 0\}$. Therefore, according to \cref{th-main-p}, every initial condition $(f_0^\h,f_0^\p) \in L^2(\T)^2$ such that $\int_\T f_0^\p = 0$ can be steered to $0$ with controls in $L^2$ in time $T > \ell(\omega)/|a'|$.
\end{proof}

\begin{proof}[Proof of \cref{th-2x2-p}]
 In this case,
 \[
 \kal{B_n}{M} = \begin{pmatrix}
  0 & \iu n a_{12} + k_{12} \\ 1 & -n^2 d + \iu n a_{22} + k_{22}
 \end{pmatrix}.
 \]
 In particular, $\det(\kal{B_n}{M}) = -\iu n a_{12} - k_{12}$. We see that if $(a_{12},k_{12}) = (0,0)$, the Kalman rank condition never holds, whatever $n$ is. Hence, according to \cref{rk-main} item 1, null-controllability does not hold, whatever $T$ is.
 
 As in the previous proof, $\kal{B_n}{M}^+ = \kal{B_n}{M}^{-1}$. Hence,
 \[
 \kal{B_n}{M}^{-1} = \frac1{-\iu n a_{12} - k_{12}}\begin{pmatrix}
  -n^2 d + \iu n a_{22} + k_{22} & -\iu n a_{12} - k_{12} \\ -1 & 0
 \end{pmatrix}.
 \]
 In particular, with the notations of \cref{th-main-p} with $k=2$, $L_{n,1}^\h =-(-n^2 d + \iu n a_{22} + k_{22})/(\iu n a_{12} + k_{12})$ and $L_{n,2}^\h = 1/(\iu n a_{12} + k_{12})$. In particular, if $a_{12}\neq 0$, $p = \max(1,1-1) = 1$. And if $a_{12} = 0$ and $k_{12} \neq 0$, $p = \max(2,1+0) = 2$.
 
 \step{Case $a_{12}\neq 0$ and $k_{12}\neq 0$} The Kalman rank condition holds for every $n$. Hence, with the notations of \cref{th-main-p}, $p = 1$ and $E =L^2(\T)^2$, and every initial condition in $H^1(\T)^2$ can be steered to $0$ with controls in $L^2$ in time $T>\ell(\omega)/|a'|$.
 
 The strategy to control initial conditions in $H^1\times L^2$ is first to let the solution evolve freely during an arbitrarily small time, which gives a $H^1(\T)^2$ state (\cref{th-regularity}), that we can steer to $0$ according to the previous discussion.
 
 \step{Case $a_{12}\neq 0$ and $k_{12}= 0$} The case is almost the same as the previous one, except that the Kalman rank condition is not satisfied for $n=0$ (and only for $n=0$). We have $\rank(\kal{B_0}{M}) = \{0\}\times \C$ and $E = \{(f_0^\h,f_0^\p) \in L^2(\T)^2,\ \int_\T f_0^\h = 0\}$. We still have $p=1$. Hence, we can steer every initial condition $(f_0^\h,f_0^\p) \in H^1(\T)^2$ such that $\int_\T f_0^\h = 0$ an be steered to $0$ with controls in time $T>\ell(\omega)/|a'|$.
 
 As in the previous case, to control initial conditions in $H^1\times L^2$, we let the solution evolve freely, which gives a $H^1(\T)^2$ state, and preserves the property $\int_\T f_0^\h = 0$. Then, we can steer this state in time $T>\ell(\omega)/|a'|$.
 
 \step{Case $a_{12} = 0$ and $k_{12} \neq 0$} In this case, the Kalman rank condition is satisfied for every $n$, and $p = 2$. Hence, according to \cref{th-main-p}, we can steer every $H^2(\T)^2$ initial condition to $0$ in time $T>\ell(\omega)/|a'|$ with controls in $L^2$.
 
 Again, to control an initial condition in $H^2\times L^2$, we let the solution evolve freely for a small time, which gives a $H^2(\T)^2$ state (\cref{th-regularity}), that we can steer to $0$ in time $T>\ell(\omega)/|a'|$.
 
 \step{Lack of null-controllability of $L^2$ initial conditions} We have $M^* \big(\begin{smallmatrix} 1\\0\end{smallmatrix}\big) = 0$. Hence, according to \cref{th-necessary-rough}, (recall that $A'$ has size $1\times 1$), there exists a $L^2(\T)^2$ initial condition with zero average that cannot be steered to $0$.
\end{proof}

\begin{proof}[Proof of \cref{th-2x2-sim}]
 We have
 \[
 \kal{B_n}{M} = \begin{pmatrix}
  1 & \iu n a' + k_{11} \\ 1 & -dn^2 + \iu n a_{22} + k_{22}
 \end{pmatrix}.
 \]
In particular, $\det(\kal{B_n}{M}) = -dn^2 +\iu n (a_{22}-a') + k_{22} - k_{11}$. We see that for $n$ large enough, this determinant is non zero. In fact, taking the real and imaginary parts,
\begin{equation}\label{eq-kalman-sim}
\det(\kal{B_n}{M}) = 0 \eq \left\{\begin{array}{l}
-dn^2 + k_{22} - k_{11} = 0\\
n(a_{22}-a') = 0
\end{array}\right.
\end{equation}
Moreover,
\[
\kal{B_n}{M}^+ = \kal{B_n}{M}^{-1} = \frac1{\det(\kal{B_n}{M})} \begin{pmatrix}
 -dn^2 + \iu n a_{22} + k_{22}& -\iu n a' - k_{11}\\ -1&1
\end{pmatrix}.
\]
Thus,
\[
L_{n,1}^\h = \frac{-dn^2 + \bigO(n)}{-dn^2 + \bigO(n)},\quad \text{ and }\quad
L_{n,2}^\h = \frac{-1}{-dn^2 + \bigO(n)}.
\]
Thus, $p = \max(0, 1-2) = 0$.

\step{Case $a'\neq a_{22}$ and $k_{11}=k_{22}$} According to \cref{eq-kalman-sim}, the Kalman condition is satisfied for $n\neq 0$. Moreover, for $n = 0$, $\range(\kal{B_0}{M}) = \C M$, thus $E = \{(f_0^\h,f_0^\p)\in L^2(\T)^2,\ \int_\T f_0^\h = \int_\T f_0^\p\}$. The \cref{th-main-p} gives the claimed controllability result.

\step{Case $a'\neq a_{22}$ and $k_{11}\neq k_{22}$} According to \cref{eq-kalman-sim}, the Kalman condition is satisfied for every $n\in \Z$. The \cref{th-main-p} gives the claimed controllability result.

\step{Case $a'= a_{22}$ and $\sqrt{(k_{22}-k_{11})/d}\notin \N$} As in the previous case, according to \cref{eq-kalman-sim}, the Kalman condition is satisfied for every $n\in \Z$. The \cref{th-main-p} gives the claimed controllability result.

\step{Case $a'= a_{22}$ and $n_0\coloneqq\sqrt{(k_{22}-k_{11})/d}\in \N$} According to \cref{eq-kalman-sim}, the Kalman condition is satisfied for $n\neq \pm n_0$. For $n = \pm n_0$, $\range(\kal{B_{\pm n_0}}{M}) = \C M$. The \cref{th-main-p} gives the claimed controllability result.
\end{proof}

\appendix
\section{A finite dimension-uniqueness principle for the null-controllability}\label{app-comp-uniq}
In the null controllability of parabolic-transport systems, we sometimes prove null-controllability ``up to a finite dimensional space'', and then use functional analysis arguments to deal with the finite-dimensional spaces that are left~\cite{LZ98,BKL20}. In the previous articles, this was not stated as a general result. This is the purpose of this appendix.
\begin{prop}\label{th-comp-uniq}
 Let $T_0>0$. Let $H$ be a complex Hilbert space. Let $A$ be an unbounded operator on $H$ that generates a strongly continuous semigroup of bounded operator on $H$. Let $U$ be another Hilbert space and let $B\colon U\to H$ a bounded control operator. For every $T>0$, let $U_T$ be a Hilbert space that is a subspace of $L^2(0,T;\ U)$ with continuous and dense injection that satisfies the following ``extension by $0$ property'':\footnote{In the application we use here, $U = L^2(\omega)$ and $U_T = H^k_0((0,T)\times \omega)$. The hypotheses of \cref{th-comp-uniq} are tailored to allow this situation.}
 if $u \in U_T$, $a,b>0$, then the function $\widetilde u$ defined by $\widetilde u(t) = 0$ for $0<t<a$, $\widetilde u(t) = u(t-a)$ for $a<t<T+a$, and $\widetilde u(t) = 0$ for $T+a<t<T+a+b$ is in $U_{T+a+b}$.
 
 Assume that there exists a finite dimensional space $\mathcal F$ of $H$ that is stable by the semigroup $\eu^{tA}$ and a closed finite codimensional space\footnote{We do not require $\mathcal G$ to be stable by $\eu^{tA}$.} $\mathcal G$ of $H$ such that: 
 \begin{itemize}
 \item (control up to finite dimension) for every $f_0 \in \mathcal G$, there exists $u \in U_{T_0}$ such that the solution $f$ of $f' = Af +Bu$ satisfies $f(T_0) \in \mathcal F$,
 \item (unique continuation) for every $\epsilon>0$ and for every finite linear combination of generalized eigenfunctions $g_0\in H$ of $A^*$, we have ${B^*(\eu^{tA^*} g_0) = 0} \text{ on } t \in (0,\epsilon) \implies g = 0$.
 \end{itemize}
 
 Then, for every $T>T_0$ and every $f_0 \in H$, there exists $u\in U_T$ such that the solution $f$ of $f' = Af +Bu$, $f(0) = f_0$ satisfies $f(T) = 0$.
 \end{prop}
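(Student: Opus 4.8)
The plan is to reduce, by the usual duality between controllability and observability, the statement to the fact that the reachable set from $0$ in time $T$ is large enough, and to establish this in two stages: first a null-controllability \emph{up to a finite-dimensional space} (using the control-up-to-finite-dimension hypothesis together with the spare time $T-T_0$), then the removal of this finite-dimensional defect by a compactness--uniqueness argument (using the unique continuation hypothesis). Throughout, I would write $T = T_0 + \delta$ with $\delta>0$, and denote by $R_\tau(u) = \int_0^\tau \eu^{(\tau-t)A}Bu(t)\diff t$ the state reached from $0$ at time $\tau$ with control $u$; steering an arbitrary $f_0$ to $0$ in time $T$ amounts to $\eu^{TA}f_0 \in \range(R_T)$.

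First I would prove the null-controllability up to a finite-dimensional space. Since $\mathcal G$ is closed and of finite codimension, decompose any $f_0 = g_0 + w_0$ with $g_0 \in \mathcal G$ and $w_0$ in the finite-dimensional space $\mathcal G^\perp$. Applying to $g_0$ the control $u\in U_{T_0}$ given by the first hypothesis, extended by $0$ on $(T_0,T)$ thanks to the extension-by-zero property of the spaces $U_T$, linearity shows that the solution issued from $f_0$ reaches, at time $T_0$, a point of the finite-dimensional space $\mathcal F_1 \coloneqq \mathcal F + \eu^{T_0A}\mathcal G^\perp$ (the $w_0$-component merely evolving freely). Letting the semigroup act on $(T_0,T)$, we obtain $\eu^{TA}f_0 + R_T(\widetilde u) \in \eu^{\delta A}\mathcal F_1$, whence the inclusion $\range(\eu^{TA}) \subseteq \range(R_T) + \mathcal V$ with $\mathcal V \coloneqq \eu^{\delta A}\mathcal F_1$ finite-dimensional. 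In other words every initial datum can be steered in time $T$ into the fixed finite-dimensional space $\mathcal V$.

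It then remains to absorb $\mathcal V$ into $\range(R_T)$, which is the heart of the matter. By the standard duality lemma (\cite[Lemma~2.48]{Coron07}, now with respect to the dual norm of $U_T$), the desired null-controllability is equivalent to an observability inequality $\|\eu^{TA^*}g\|_H \le C\,\|B^*\eu^{(T-\cdot)A^*}g\|_{U_T'}$, and the previous step supplies this inequality up to a finite-rank, hence compact, remainder. I would run a compactness--uniqueness argument: assuming the inequality fails, a normalized sequence yields, after extraction, a nonzero weak limit $g$ along which the compact remainder converges strongly and $B^*\eu^{sA^*}g = 0$ for $s\in(0,T)$. Differentiating this identity in $s$ shows that the space of such $g$ is invariant under $A^*$, while the weak observability forces it to be finite-dimensional; hence $g$ is a finite linear combination of generalized eigenfunctions of $A^*$. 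The unique continuation hypothesis then applies verbatim and gives $g=0$, a contradiction, so the full observability inequality holds and the conclusion follows.

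The main obstacle is precisely this last step: organizing the compactness--uniqueness so that the residual obstruction is confined to a finite-dimensional $A^*$-invariant subspace — spanned by generalized eigenfunctions, which is exactly the class on which the unique continuation hypothesis is phrased — while tracking the possibly non-closed range of $R_T$ and the duality in the $U_T'$ norm rather than in $L^2$. The two bookkeeping devices that make everything fit are the stability of $\mathcal F$ under the semigroup, which ties the finite-dimensional defect to the spectral structure of $A^*$, and the extension-by-zero property of the $U_T$, which allows the controls built on $(0,T_0)$ and on $(T_0,T)$ to be concatenated into a single admissible control.
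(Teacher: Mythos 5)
Your first stage is sound and parallels the paper's starting point: controlling $\mathcal G$ into $\mathcal F$ while the finite-dimensional complement evolves freely does steer every $f_0$ into a fixed finite-dimensional space $\mathcal V$, i.e.\ $\range(\eu^{TA}) \subseteq \range(R_T\circ\iota_T) + \mathcal V$ (with $\iota_T\colon U_T\to L^2(0,T;U)$ the injection), equivalently an observability inequality with the compact remainder $\|P_{\mathcal V}g\|_H$, $P_{\mathcal V}$ being the orthogonal projection onto $\mathcal V$. The gap is in the compactness--uniqueness step, and it is not bookkeeping. First, the extraction fails: since $A$ only generates a \emph{semigroup}, $\eu^{TA^*}$ is not boundedly invertible, so normalizing the contradicting sequence by $\|\eu^{TA^*}g_n\|_H = 1$ does not bound $\|g_n\|_H$ and no weakly convergent subsequence is available; if instead you normalize $\|g_n\|_H = 1$, you lose the lower bound $\|P_{\mathcal V}g_n\|\geq c>0$ that makes the weak limit nonzero, because you only know $\|\eu^{TA^*}g_n\| > n\,\|\iota_T^*R_T^*g_n\|_{U_T'}$, which is compatible with $\eu^{TA^*}g_n \to 0$. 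Second, even granting a nonzero limit $g$ with $B^*\eu^{sA^*}g \equiv 0$ on $(0,T)$, your route to ``$g$ is a finite combination of generalized eigenfunctions of $A^*$'' does not go through: restricted to the non-observed space $N_T \coloneqq \{g\colon B^*\eu^{sA^*}g = 0 \text{ on }(0,T)\}$, the weak observability only yields $\|\eu^{TA^*}g\| \leq C\|P_{\mathcal V}g\|$, which bounds $\dim N_T$ only if $\eu^{TA^*}$ is injective (not assumed); moreover $N_T$ is not invariant under $\eu^{tA^*}$ or $A^*$ as it stands, since $\eu^{tA^*}$ maps $N_T$ into $N_{T-t}$ (the observation window shrinks), so a local-constancy-in-$T$ argument is required before any spectral conclusion --- and differentiating in $s$ presupposes $g\in D(A^*)$.

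The paper's proof sidesteps exactly these obstructions by never attempting the full observability inequality on $H$. It works on the control side: after arranging $\mathcal F\subset\mathcal G$, it chooses the control linearly and continuously in $f_0$ (orthogonal projection of $0$ onto a closed affine set, plus the closed graph theorem), so that $f_0 \mapsto -S_0(-T)f(T)$ is a finite-rank, hence compact, operator $\mathcal K$ on $\mathcal G$, and Fredholm's alternative shows that the null-controllable space $\mathcal N_T$ is closed with finite codimension. Monotonicity and integrality of $\codim(\mathcal N_T)$ give local constancy in $T$, whence $\mathcal N_{T_1}^\perp$ is finite-dimensional and invariant under $\eu^{tA^*}$, hence spanned by generalized eigenvectors of $A^*$. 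The decisive trick is then to consider the auxiliary problem of steering all of $H$ into $\mathcal N_{T_1}$ in a short time $\epsilon$: because the target has finite codimension, the dual observability inequality lives on the finite-dimensional space $\mathcal N_{T_1}^\perp$, where it is \emph{equivalent} to the unique continuation hypothesis --- no uniform constant, hence no compactness--uniqueness extraction, is needed. Concatenating the two controls finishes the proof. To keep your dual-side formulation you would have to rebuild all of these ingredients (linear selection, Fredholm, local constancy, and the finite-codimensional-target reduction); as written, the second stage of your argument does not close.
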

 \begin{remark}
 \begin{itemize}
\item In this proposition, we can weaken the hypothesis ``$B$ bounded'' into ``$B$ admissible'' (see~\cite[\S2.3]{Coron07}), but in this article, $B$ is always bounded.
 
 \item If the assertion ``($g_0\in H$ is a finite linear combination of generalized eigenfunctions of $A^*$ and $B^* g_0 = 0$) $\implies g_0 = 0$'' holds, the unique continuation hypothesis is satisfied by well-posedness.
 \end{itemize}
 \end{remark}
\begin{proof}
 \step{We may assume that $\mathcal F \subset \mathcal G$} We prove that if we replace $\mathcal G$ by $\mathcal F + \mathcal G$, the hypotheses are still satisfied. Let $f_0 \in \mathcal F + \mathcal G$. We write $f_0 = f_{\mathcal F} + f_{\mathcal G}$. According to the hypotheses, there exists $u\in U_{T_0}$ such that the solution $f$ of $f' = Af +Bu$, $f(0) = f_{\mathcal G}$ is such that $f(T_0) \in \mathcal F$. Then, the solution $\widetilde f$ of $\widetilde f' = A \widetilde f + Bu$, $\widetilde f(0) = f_0$ is such that
 \[
 \widetilde f(T_0) = \underbrace{\eu^{T_0A} f_{\mathcal F}}_{\in \mathcal F} + \underbrace{f(T_0)}_{\in \mathcal F}.
 \]
 
 Note that if we replace $T_0$ by any $T_1>T_0$, the hypotheses are still satisfied.
 
 \step{For $T>T_0$, the control $u \in U_T$ such that $f(T) \in \mathcal F$ may be chosen linearly and continuously in $f_0\in \mathcal G$} This is a standard proof of control theory. For $f_0 \in \mathcal G$, set
 \[
 V(f_0) \coloneqq \{u \in U_{T}\colon f(T) \in \mathcal F,\, f \text{ solves } f'=Af+Bu,\, f(0) = f_0\}.
 \]
 Since $A$ generates a strongly continuous semigroup, $V(f_0)$ is a closed affine subspace of $U_T$. Then, we can define $\mathcal U(f_0)$ as the orthogonal projection of $0$ onto $V(f_0)$ for the $U_T$-norm. Using the characterization of orthogonal projection on closed convex set, we see that $\mathcal U$ is linear. Using the fact that $A$ generates a strongly continuous semigroup, the characterization of the projection on closed convex subsets and the closed graph theorem, we see that $\mathcal U$ is bounded. 
 
 For the rest of the proof we set $\mathcal U_T\colon \mathcal G \to U_T$ such a map. We also set
 \begin{equation}
     \mathcal N_T \coloneqq \{f_0 \in H\colon \exists u \in U_T,\, f(T) = 0,\, f\text{ solves } f' = Af +Bu,\, f(0) = f_0\}.
 \end{equation}
 
 \step{For $T\geq T_0$, $\mathcal N_T$ is a closed finite codimensional subspace of $H$}\label{step-closed}
 Set $S_0(t)$ the semigroup $\eu^{tA}$ restricted to $\mathcal F$. Since $\mathcal F$ is finite dimensional, $S_0(t)$ can be written as $\eu^{tA_0}$, where $A_0$ is a bounded operator of $\mathcal F$. Moreover, $A_0 = A_{|\mathcal F}$. In particular, $S_0$ is actually a \emph{group} of bounded operators.
 
 For $f_0 \in \mathcal G$, and $f' = Af+B\mathcal U_T f_0$, $f(0) = f_0$, we have $f(T) \in \mathcal F$, which allows us to define
 \[
 \mathcal K\colon f_0 \in \mathcal G \mapsto - S_0(-T) f(T) \in \mathcal F
 \]
 The range of this operator $\mathcal K$ satisfies $\range(\mathcal K) \subset \mathcal F$. Hence, $\mathcal K$ has finite rank and is compact. Thus, according to Fredholm's alternative, $(I+\mathcal K)\mathcal G$ is a closed subspace of $\mathcal G$ of finite codimension.
 
 Moreover, for every $f_0 \in \mathcal G$, the solution $\widetilde f$ of $\widetilde f' = A\widetilde f+ B \mathcal U_T f_0$, $\widetilde f(0) = f_0 + \mathcal K f_0$ satisfies
 \[
 \widetilde f(T) = f(T) + \eu^{TA} \mathcal K f_0 = f(T) - S_0(T)S_0(-T) f(T) = 0.
 \]
 Thus, $(I+\mathcal K)\mathcal G \subset \mathcal F_T$. According to~\cite[Proposition~11.5]{Brezis11}, this proves that $\mathcal N_T$ is closed and has finite codimension in $H$.
 
 \step{There exists $\delta >0$ such that for every $T,T'\in(T_0, T_0+\delta)$, $\mathcal N_T = \mathcal N_{T'}$} Assume $T_0<T<T'$. If $u\in \mathcal N_T$, and if we extend $u$ by $0$ on $(T,T')$, we have have $u\in \mathcal N_{T'}$. Thus $\codim(\mathcal N_{T'})\leq \codim(\mathcal N_T)$. Since $\codim(\mathcal N_T)$ is an integer, the discontinuities of $T\mapsto \codim(\mathcal N_T)$ are isolated, which proves the claim.
 
 From now on, we choose $\epsilon \in (0,\delta/2)$ arbitrarily small and we set $T_1 = T_0 +\epsilon$.
 
 \step{For $t\in (0,\epsilon)$, $(\eu^{tA^*} \mathcal N_{T_1}^\bot)^\bot \subset \mathcal N_{T_1}$}  Let $0<t<\epsilon$ and $f_0 \in (\eu^{tA^*} \mathcal N_{T_1}^\bot)^\bot$. For every $g_0 \in \mathcal N_{T_1}^\bot$, we have
 \[
 0 = \langle \eu^{tA^*} g_0, f_0\rangle = \langle g_0, \eu^{t A} f_0\rangle.
 \]
 Thus, $\eu^{tA}f_0 \in (\mathcal N_{T_1}^\bot)^\bot$. Since $\mathcal N_{T_1}$ is closed (step \ref{step-closed}), $\eu^{tA}f_0 \in \mathcal N_{T_1}$. By definition of $\mathcal N_{T_1}$ and the ``extension by $0$'' property of $U_{T_1}$, this proves that $f_0 \in \mathcal N_{T_1+t}$. According to the previous step, $\mathcal N_{T_1+t} = \mathcal N_{T_1}$, which proves the claim.
 
 \step{$\mathcal N_{T_1}^\bot$ is left-invariant by $\eu^{tA^*}$}\label{step-stable} First, consider $0<t<\epsilon$. According to the previous step, $\mathcal N_{T_1}^\bot \subset \big((\eu^{tA^*} \mathcal N_{T_1}^\bot)^\bot\big)^\bot$. Since $\mathcal N_{T_1}^\bot$ is finite dimensional hence closed, $\mathcal N_{T_1}^\bot \subset \eu^{tA^*} \mathcal N_{T_1}^\bot$. Moreover, $\dim(\eu^{tA^*} \mathcal N_{T_1}^\bot) \leq \dim (\mathcal N_{T_1}^\bot)$. Thus, for $0<t<\epsilon$, $\eu^{tA^*} \mathcal N_{T_1}^\bot = \mathcal N_{T_1}^\bot$. Thanks to the semigroup property, this is true for all $t>0$.

 \step{Unique continuation property associated to the control problem ``steer every $f_0\in H$ into $\mathcal N_{T_1}$ in time $\epsilon$ with a control in $U_\epsilon$''}\label{step-control-F}
 The control problem is, in mathematical form, the following:
 \begin{equation}\label{eq-NT-contr}
     \forall f_0 \in H,\ \exists u \in U_\epsilon,\ f(T) \in \mathcal N_{T_1},\text{ where } f' =Af + B u,\ f(0) = f_0.
 \end{equation}
 
 Let $\Pi\colon H\to H$ the orthogonal projection on $\mathcal N_{T_1}^\bot$. Set also $R_T\colon L^2(0,T; U) \to H$ the input-to-output map defined by
 \[
 R_T u \coloneqq f(T), \text{ where } f' = Af +Bu,\ f(0) = 0.
 \]
 Then, the control problem~\eqref{eq-NT-contr} is equivalent to
 \[
 \forall f_0 \in H,\ \exists u \in U_\epsilon,\ \Pi \eu^{\epsilon A} f_0 + \Pi R_{\epsilon} u = 0.
 \]
 We denote by $\iota_\epsilon$ the injection map $U_\epsilon \to L^2(0,T; U)$. Then, the previous assertion is equivalent to
 \[
 \range\big(\Pi \circ \eu^{\epsilon A}\big) \subset \range\big(\Pi \circ R_{\epsilon} \circ \iota_\epsilon \big).
 \]
 The observability inequality associated to this control problem is (see~\cite[Lemma~2.48]{Coron07}):
 \begin{equation*}
     \forall g_0 \in H,\ \|\eu^{\epsilon A^*}\circ \Pi^* g_0\| \leq C\|\iota_\epsilon^* \circ R_{\epsilon}^* \circ \Pi^* g_0\|.
 \end{equation*}
 Since $\range(\Pi^*) = \mathcal N_{T_1}^\bot$ is finite-dimensional, and since $\ker(\iota^*) = \range(\iota)^\bot = \{0\}$, this is equivalent to
 \begin{equation}\label{eq-uc-NT1-bis}
     \forall g_0 \in \mathcal N_{T_1}^\bot,\ R_{\epsilon}^* g_0 = 0 \implies \eu^{\epsilon A^*} g_0 = 0.
 \end{equation}
 To conclude, since $\mathcal N_{T_1}^\bot$ is finite dimensional and stable by $\eu^{tA^*}$, the semigroup $\eu^{tA^*}$ is in fact a \emph{group}, and in particular $\eu^{\epsilon A^*}$ is invertible on $\mathcal N_{T_1}^\bot$. Moreover, $R_{\epsilon}^* g_0(t) = B^* \eu^{(\epsilon-t)A^*} g_0$ (see~\cite[Lemma~2.47]{Coron07}). Thus, the assertion~\eqref{eq-uc-NT1-bis} is equivalent to
 \begin{equation}\label{eq-uc-NT1}
     \forall g_0 \in \mathcal N_{T_1}^\bot,\ \Big(B^* \eu^{tA^*} g_0 = 0\text{ for } 0<t<\epsilon\Big) \implies g_0 = 0.
 \end{equation}
 
 \step{Conclusion}
 The unique continuation property~\eqref{eq-uc-NT1} of the previous step is exactly the unique continuation property we assumed. Thus, according to the previous step, we can steer every $f_0 \in H$ into $\mathcal N_{T_1}$ in time $\epsilon$ with a control in $U_\epsilon$. According to the definition of $\mathcal N_{T_1}$, we can steer every $f_0 \in \mathcal N_{T_1}$ to $0$ in time $T_1 = T + \epsilon$ with a control in $U_{T_1}$. Hence, we can steer every $f_0 \in H$ to $0$ in time $T_1 + \epsilon = T + 2\epsilon$ with a control in $U_{T+2\epsilon}$. Since $\epsilon$ can be chosen arbitrarily small, this proves the proposition. 
\end{proof}

\end{document}